\documentclass[reqno,11pt]{amsart}
\usepackage{amsmath,amssymb,amsfonts,amsthm,amscd,indentfirst,thmtools,mathtools,mathrsfs}

\usepackage[hyphens]{url}
\usepackage{hyperref}
\usepackage{bookmark}
\mathtoolsset{showonlyrefs=true}

\newcounter{mparcnt}

\usepackage{fancyhdr}
\usepackage{esint}
\usepackage{enumerate}

\usepackage{pictexwd,dcpic}
\usepackage{graphicx}
\usepackage{caption}
\usepackage{xcolor}
\setlength{\overfullrule}{5pt}

\usepackage{enumitem}

\usepackage[msc-links]{amsrefs} 

\usepackage{epsfig,here}
\usepackage{subfigure,here}

\newtheorem{theorem}{Theorem}[section]
\newtheorem{lemma}[theorem]{Lemma}
\newtheorem{proposition}[theorem]{Proposition}
\newtheorem{remark}[theorem]{Remark}

\setcounter{secnumdepth}{3}
\setcounter{tocdepth}{3}

\usepackage[top=4cm, bottom=4cm, right=3cm, left=3cm]{geometry}

\def\om{\omega}
\def\Om{\Omega}
\def\p{\partial}
\def\ep{\epsilon}
\def\de{\delta}
\def\De{\Delta}

\def\S{{\Sigma}}
\def\<{\langle}
\def\>{\rangle}
\def\div{{\rm div}}
\def\na{\nabla}

\providecommand{\abs}[1]{\lvert#1\rvert}
\providecommand{\Abs}[1]{\left\lvert#1\right\rvert}

\providecommand{\norm}[1]{\lVert#1\rVert}


\newcommand{\mbN}{\mathbb{N}}

\newcommand{\mbR}{\mathbf{R}}
\newcommand{\mbS}{\mathbf{S}}

\newcommand{\mcA}{\mathcal{A}}

\newcommand{\mcH}{\mathcal{H}}

\newcommand{\mcL}{\mathcal{L}}

\newcommand{\mcW}{\mathcal{W}}


\newcommand{\mfR}{\mathbf{R}}


\newcommand{\rd}{{\rm d}}

\newcommand{\pr}{\partial_{\rm rel}}

\newcommand{\ra}{\rightarrow}

\newcommand{\eq}[1]{\begin{equation}\begin{alignedat}{2} #1 \end{alignedat}\end{equation}}

\numberwithin{equation}{section}

\begin{document}
	
\title[Quantitative Alexandrov theorem]{Quantitative Alexandrov theorem for capillary hypersurfaces in the half-space}

\date{\today}

\author[Jia]{Xiaohan Jia}
\address[X.J]{School of Mathematics\\
Southeast University\\
 211189, Nanjing, P.R. China}
\email{xhjia@seu.edu.cn}
	
\author[Zhang]{Xuwen Zhang}
\address[X.Z]{Mathematisches Institut\\
		Universit\"at Freiburg\\
	Ernst-Zermelo-Str.1\\
		79104\\
  \newline\indent Freiburg\\ Germany}
\email{xuwen.zhang@math.uni-freiburg.de}


\dedicatory{Dedicated to Professor Guofang Wang on the occasion of his 60th birthday}

\begin{abstract}

In this paper, we prove the quantitative version of the Alexandrov theorem for capillary hypersurfaces in the half-space,  which generalizes Julin-Niinikoski's result \cite{JN23} to the capillary case.
The proof is based on the quantitative analysis of the Montiel-Ros-type argument carried out in our joint works with Wang-Xia \cites{JWXZ22-B,JWXZ23}.
\
		
\noindent {\bf MSC 2020: 53C45, 53A10} \\
{\bf Keywords:} Alexandrov's theorem, capillary hypersurfaces, Montiel-Ros argument
\\
\end{abstract}
	
\maketitle
\tableofcontents
\section{Introduction}

Constant mean curvature (CMC) hypersurfaces play the role as stationary points of the isoperimetric problem.
A celebrated theorem in differential geometry, which is known as \textit{Alexandrov's soap bubble theorem} \cite{Aleksandrov62}, says that any bounded domain $\Om$ whose boundary $\p\Om$ is a smooth, connected hypersurface in $\mfR^{n+1}$ with constant mean curvature, is a geodesic ball.

\subsection{Quantitative soap bubble problem}
A natural question is to characterize the shape of any such domain $\Om$, when the mean curvature of $\p\Om$, say $H_{\p\Om}$, is a perturbation from some constant.
This is the so-called \textit{quantitative soap bubble problem}, and has been intensively studied by many mathematicians, here we mention some of the works in this direction.

In \cite{CV18}, by using a quantitative treatment of the moving plane method, Ciraolo-Vezzoni showed that if the oscillation of $H_{\p\Om}$ is small enough, $\p\Om$ then stays between two concentric spheres, with difference of radii controlled by $C\cdot{\rm osc}(H_{\p\Om})$, where $C$ depends only on dimensional constant $n$, area $\abs{\p\Om}$, and lower bound for interior and exterior balls, the rate of stability is sharp, as can be seen from a simple calculation for ellipsoids.
In \cite{CM17}, under the assumption that $\p\Om$ is mean convex, Ciraolo-Maggi proved the following quantitative stability result: when the so-called \textit{Alexandrov deficit}
\eq{
\de(\Om)
\coloneqq\frac{\norm{H_{\p\Om}-H_0}_{C^0(\p\Om)}}{H_\Om},\text{ where }H_0=\frac{n\abs{\p\Om}}{(n+1)\abs{\Om}}
}
is small, $\p\Om$ will be close to a family of controllable number of disjoint spheres with equal radii, in the sense of Hausdorff distance, while the closeness is measured exponentially with coefficient $C$ depends only on $n,\abs{\p\Om},{\rm diam}(\Om)$, and the exponential power is given by $\alpha=\frac1{2(n+2)}$.
The fact that $C$ does not depend on the interior and exterior radius accounts for the so-called \textit{bubbling phenomenon},
see also \cite{DMMN18} for some developments in the anisotropic frameworks.
For some recent progress on the quantitative soup bubble problem involving interior and exterior radius, see \cites{MP19,MP20-1,MP20-2,Scheuer21,SX22,SZ23,Poggesi24}.
Assuming the strictly convexity of $\Om$,
some previous stability results can be found in the references in \cites{CM17,CV18}.

Quite recently, beyond mean convexity, Julin-Niinikoski \cite{JN23} showed the following quantitative result: provided that the \textit{$L^n$-deficit} with respect to some positive constant $\lambda$,
\eq{
\norm{H_{\p\Om}-\lambda}_{L^n(\p\Om)},
}
is small, $\p\Om$ should be close to a family of controllable number of disjoint spheres with equal radii $\frac{n}\lambda$, in the sense that the Hausdorff distance will be controlled exponentially by the $L^n$-deficit with coefficient depends only on $n$, the upper bound of $\abs{\p\Om}$ and the lower bound of $\abs{\Om}$, while the exponential power is given explicitly by some dimensional constant $q(n)\in(0,1]$.
Their approach is based on a subtle quantitative analysis of the \textit{Montiel-Ros argument} \cite{MR91}.
\subsection{Capillary Alexandrov's theorem}
Our main focus of the paper is the Alexandrov's theorem for capillary hypersurfaces in the half-space $\mfR^{n+1}_+=\{x\in\mfR^{n+1}:\left<x,E_{n+1}\right>>0\}$.
For the long history of the study of capillary surfaces, we refer to the monograph \cite{Finn86} and also \cite{Mag12}*{Chapter 19} for an overview.
A well-known result is the classical \textit{isoperimetric capillarity problem} in the half-space, see e.g., \cite{Mag12}*{Theorem 19.21}, with the quantitative version recently shown by Pascale-Pozzetta in \cite{PP24}.

Throughout the paper, we use the terminology \textit{$\theta$-capillary hypersurfaces} in the half-space to refer to embedded, compact, $C^2$-hypersurfaces that are supported on $\p\mfR^{n+1}_+$ and intersecting $\p\mfR^{n+1}_+$ with a fixed contact angle $\theta\in(0,\pi)$.
The Alexandrov's theorem for capillary hypersurfaces in the half-space states that:
\begin{center}{
\it 
The $\theta$-cap is the only
connected, CMC, $\theta$-capillary hypersurface in $\overline{\mfR^{n+1}_+}$.
}
\end{center}
The first proof of this result goes back to \cite{Wente80}, in which the moving plane method together with Serrin's boundary point lemma \cite{Serrin71} is used by Wente to show the axially symmetric of $\S$ with respect to some vertical direction of $\p\mfR^{n+1}_+$.
Recently, joint with Wang-Xia we reprove the Alexandrov-type theorem by using the so-called \textit{integral method} initiated by Ros \cite{Ros87} in \cite{JXZ23}; and by using the Montiel-Ros-type argument \cite{MR91} in \cite{JWXZ22-B}.
The intermediate step of both approaches is establishing the \textit{Heintze-Karcher-type inequalities} for mean convex capillary hypersurfaces. 
The former approach is based on the study of a specific torsion potential problem on $\Om\subset\mfR^{n+1}_+$, a bounded domain that is adhering to $\p\mfR^{n+1}_+$, whose relative boundary $\S=\overline{\p\Om\cap\mfR^{n+1}_+}$ is a $\theta$-capillary hypersurface.
Precisely, in \cite{JXZ23} the solution to the following mixed boundary elliptic equation is considered:
\eq{
\begin{cases}
\De f=1\quad&\text{ in }\Om,\\
f=0\quad&\text{ on }\S,\\
\left<\na f,E_{n+1}\right>=\frac{n}{n+1}\cot\theta\frac{\abs{\p\Om\cap\p\mfR^{n+1}_+}}{\abs{\p\S}}\quad&\text{ on }\p\Om\cap\p\mfR^{n+1}_+.
\end{cases}
}
The key ingredient is the \textit{Reilly-type formula} \cites{Reilly77,QX15,LX19}, which relates the trace-free Hessian of $f$ to the mean curvature $H_\S$.
However, the regularity of $f$ becomes a delicate issue when using Reilly's formula, which is why the Alexandrov-type theorem, together with a quantitative version in \cite{JLXZ23}, can be proved only for $\theta\in(0,\frac\pi2]$.

The later approach \cite{JWXZ22-B}, based on a purely geometric argument enlightened by Montiel-Ros \cite{MR91}, perfectly solves the concern and completes the whole range of $\theta$.
Let us quickly review this argument:

For any $\theta\in(0,\pi)$ and any bounded domain $\Om\subset \overline{\mfR^{n+1}_+}$ whose relative boundary $\S$ is a mean convex $\theta$-capillary hypersurface in $\overline{\mfR^{n+1}_+}$, we define a set
\eq{
Z=\left\{(x,t)\in \S\times \mfR: 0<t\leq \frac{1}{\max_i \{\kappa_i(x)\}}\right\},
}
and a map  which indicates a family of \textit{shifted parallel hypersurfaces},
\eq{
\zeta_\theta: Z\to \mfR^{n+1}: \quad \zeta_\theta(x, t)=x-t(\nu-\cos\theta E_{n+1})
}
where $\kappa_i, i=1,\ldots,n,$ are the principal curvatures and $\max_i \kappa_i(x)\geq\frac{H_\S}{n}>0$.
Using the capillary boundary condition, one finds that $\zeta_\theta$ is surjective onto $\Omega$, namely, $\Om\subset \zeta_\theta(Z)$. 
By the area formula and the AM-GM inequality, one obtains
\eq{\label{ineq-hk-iso-argument}
    \abs{\Om}
    \le\abs{\zeta_\theta(Z)}
    &\leq\int_{\zeta_\theta(Z)}\mcH^0(\zeta_\theta^{-1}(y))\rd y=\int_Z{\rm J}^Z\zeta_\theta\rd\mcH^{n+1}\\
    &=\int_{\S}\int_0^\frac{1}{\max_i \kappa_i(x)}(1-\cos\theta\left<\nu, E_{n+1}\right>)\prod_{i=1}^n(1-t\kappa_i(x))\rd t\rd\mcH^n(x)\\
    &\leq\int_\S(1-\cos\theta\left<\nu, E_{n+1}\right>)\int_0^\frac{1}{\max_i \kappa_i(x)}\left(1-t\frac{H_\S(x)}{n}\right)^n\rd t\rd\mcH^n\\
    &\leq\int_\S(1-\cos\theta\left<\nu, E_{n+1}\right>)\int_0^\frac{n}{H_\S(x)}\left(1-t\frac{H_\S(x)}{n}\right)^n\rd t\rd\mcH^n\\
    &\le\frac{n}{n+1}\int_{\S}\frac{1-\cos\theta\left<\nu,E_{n+1}\right>}{H_\S}\rd\mcH^n,
}
so that the Heintze-Karcher inequality holds: 
\eq{\label{ineq-hk-iso}
\abs{\Om}
\le\frac{n}{n+1}\int_{\S}\frac{1-\cos\theta\left<\nu,E_{n+1}\right>}{H_\S}\rd\mcH^n
}
with equality achieved if and only if $\Om$ is a \textit{$\theta$-ball} which is adhering to $\p\mfR^{n+1}_+$, characterized by $B_{r;\theta}(o)\cap\mfR^{n+1}_+$, where $r>0,o\in\p\mfR^{n+1}_+$ and
\eq{
B_{r;\theta}(o)
\coloneqq\{x\in\mfR^{n+1}:\Abs{x-(o-r\cos\theta E_{n+1})}\leq r\}.
}
The relative boundary of $\theta$-balls in $\mfR^{n+1}_+$ is called \textit{$\theta$-caps}.

Combining \eqref{ineq-hk-iso} with the Minkowski-type formula (see e.g., \cite{AS16})
\eq{
\int_{\S}\left<x,H_\S\nu\right>\rd\mcH^n
=\int_{\S}n\left(1-\cos\theta\left<\nu, E_{n+1}\right>\right)\rd\mcH^n,
}
and also the elliptic point lemma \cite{JWXZ22-B}*{Proposition 5.2}, one finds:
if $\S$ is CMC, then equality in \eqref{ineq-hk-iso} holds and in turn, $\Om=B_{\frac{n}{H_\S};\theta}(o)\cap\mfR^{n+1}_+$ for some $o\in\p\mfR^{n+1}_+$.

In view of the rich study of quantitative soap bubble problem for closed hypersurfaces as mentioned above, it would be interesting to ask whether or not the quantitative Alexandrov's soap bubble theorem holds for hypersurfaces with free boundary or capillary boundary.
The purpose of this paper is to address this problem in the half-space case, with suitable and natural choices of factors and coefficients which appear in the stability measurement.


\subsection{Main result}

Our main result is as follows.

\begin{theorem}\label{Thm-Stability}
Given $n\in\mbN^+$, $\theta\in(0,\pi)$,
let $\S\subset\overline{\mbR^{n+1}_+}$ be a compact, embedded $\theta$-capillary hypersurface.
Let $\Om$ denote the enclosed region of $\S$ with $\p\mbR^{n+1}_+$ such that $\p\Om=\S\cup T$.
Given $\lambda\in\mbR_+$ and $1\leq C_0<\infty$, let $R=\frac{n}\lambda$.
There exist positive constants
\eq{
C=C(n,\theta,C_0),\text{ }N\leq N_0=N_0(n,\theta,C_0),
\text{ }\de=\de(n,\theta,C_0),
}
and points
\eq{
o_1,\ldots,o_N\in\overline\Om\subset\overline{\mfR^{n+1}_+},
}
such that if $P(\Om)\leq C_0$, $\abs{\Om}\geq C_0^{-1}$ and
\eq{\norm{H_\S-\lambda}_{L^n(\S)}\leq\de,
}
then for $S_\theta\coloneqq\bigcup_{i=1}^N\p B_{R;\theta}(o_i)\cap\overline{\mfR^{n+1}_+}$, there holds
\eq{\label{esti-Hausdorff-distance}
{\rm dist}(\S,S_\theta)
\leq C\norm{H_\S-\lambda}_{L^n(\S)}^{\frac1{(n+2)^2}}.
}
In particular,
for the centers $o_1,\ldots,o_N$,
either
\eq{\label{ineq-o_i,E_n+1-Thm-1}
\left<o_i,E_{n+1}\right>
\leq C\norm{H_\S-\lambda}_{L^n(\S)}^\frac1{(n+2)^2},
}
or
\eq{\label{ineq-o_i,E_n+1-Thm-2}
\left<o_i,E_{n+1}\right>
\geq (1+\cos\theta)R-C\norm{H_\S-\lambda}_{L^n(\S)}^\frac1{(n+2)^2}.
}
Moreover,
\begin{enumerate}
    \item For $\theta\in[\frac\pi2,\pi)$, and for any $1\leq i\neq j\leq N$,
\eq{\label{ineq-o_i-o_j-1}
\abs{o_i-o_j}
\geq2R-2C\norm{H_\S-\lambda}_{L^n(\S)}^{\frac1{(n+2)^2}}.
}
    \item For $\theta\in(0,\frac\pi2)$, if both $o_i,o_j$ satisfy \eqref{ineq-o_i,E_n+1-Thm-1}, then
\eq{\label{ineq-o_i-o_j-2}
F_\theta^o(o_i-o_j)
\geq2R-2C\norm{H_\S-\lambda}_{L^n(\S)}^\frac1{(n+2)^2},
}
otherwise
\eq{\label{ineq-o_i-o_j-3}
\abs{o_i-o_j}
\geq2R-2C\norm{H_\S-\lambda}_{L^n(\S)}^{\frac1{(n+2)^2}}.
}
\end{enumerate}
\end{theorem}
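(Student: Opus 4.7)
The plan is to adapt the quantitative Montiel-Ros strategy of Julin--Niinikoski \cite{JN23} to the capillary/anisotropic setting introduced via the gauge $F_\theta$, using \eqref{ineq-hk-aniso-argument} as the fundamental identity whose deficit is to be controlled by $\norm{H_\S-\lambda}_{L^n(\S)}$. The starting point is a quantitative Heintze--Karcher inequality in the capillary half-space, obtained by tracking every inequality in \eqref{ineq-hk-iso-argument}--\eqref{ineq-hk-aniso-argument} and combining it with the Minkowski-type formula so that
\eq{
\int_\S\frac{F_\theta(\nu)}{H_\S}\rd\mcH^n-\frac{n+1}{n}\abs{\Om}
\ \ \text{and}\ \
\int_\S\frac{(H_\S-\lambda)F_\theta(\nu)}{\lambda H_\S}\rd\mcH^n
}
are both controlled, via H\"older, by $\norm{H_\S-\lambda}_{L^n(\S)}$ times a constant depending on the a-priori data $(n,d_m,\theta,C_0)$. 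One then derives pointwise consequences on an anisotropic ``almost umbilical'' set: on a subset of $\S$ where the AM--GM gap is tiny the principal curvatures are close to $\lambda/n$.

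Next I would run the selection of centers. Following \cite{JN23}, define the ``good'' points of $\S$ to be those $x$ for which the segment $\{x-t\nu_{F_\theta}(x):0\le t\le R\}$ survives inside $\Om$ up to parameter near $R=n/\lambda$; the quantitative H--K deficit forces the complement to have small measure. At each good $x$ one plants a full $\theta$-cap $\p B_{R;\theta}(o(x))\cap\overline{\mfR^{n+1}_+}$ centered at $o(x)=x-R(\nu-\cos\theta E_{n+1})$; by a Vitali-type extraction I would select a maximal disjoint family $o_1,\dots,o_N$ with $N\le N_0(n,d_m,\theta,C_0)$ (the bound on $N$ comes from volume comparison with $\abs{\Om}\lesssim C_0$ and the fact that each cap has definite volume). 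This gives the candidate $S_\theta=\bigcup_{i}\p B_{R;\theta}(o_i)\cap\overline{\mfR^{n+1}_+}$.

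The Hausdorff estimate \eqref{esti-Hausdorff-distance} is established in two directions. To show every point of $\S$ is close to some $\p B_{R;\theta}(o_i)$, I would use the shifted flow $\zeta_{F_\theta}$: the area formula together with the quantitative H--K says that, outside a small set, $\zeta_{F_\theta}$ moves points of $\S$ a distance $\approx R$ inside $\Om$ to hit one of the centers $o_i$, so $\operatorname{dist}(x,\p B_{R;\theta}(o_i))$ is small in $L^n$ and then, by a standard ODE/regularity passage on the Weingarten operator (each conversion from integral to pointwise closeness losing one power $\frac1{n+2}$), also in $L^\infty$. The reverse inclusion -- that every point of $S_\theta$ is close to $\S$ -- is proved by contradiction: if a small $\theta$-cap piece of radius $r\gg\norm{H_\S-\lambda}_{L^n(\S)}^{1/(n+2)^2}$ were disjoint from $\S$, then either a nearby component of $\S$ would disappear, contradicting the control on $N$, or the balance between $\abs\Om$ and the planted caps would fail. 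The final exponent $\frac{1}{(n+2)^2}$ comes from composing the $L^n$-to-$L^1$ step (loss $\frac{1}{n}$) with the iterated integral-to-pointwise passage (loss $\frac{1}{n+2}$).

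The dichotomy \eqref{ineq-o_i,E_n+1-Thm-1}--\eqref{ineq-o_i,E_n+1-Thm-2} and the almost-tangency \eqref{ineq-o_i-o_j-1}--\eqref{ineq-o_i-o_j-3} follow from the rigidity of $\theta$-caps touching $\p\mfR^{n+1}_+$: a $\theta$-cap has its center either on $\p\mfR^{n+1}_+$ (then $\langle o,E_{n+1}\rangle=0$) or its relative boundary in the half-space misses $\p\mfR^{n+1}_+$ (then $\langle o,E_{n+1}\rangle\ge(1+\cos\theta)R$); the quantitative statement is obtained by comparing $o_i$ with the nearest true $\theta$-cap center via \eqref{esti-Hausdorff-distance}. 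For the almost-tangency I would argue that if two $\theta$-caps $\p B_{R;\theta}(o_i),\p B_{R;\theta}(o_j)$ overlapped too much, the planted copies would violate disjointness up to the error $C\norm{H_\S-\lambda}_{L^n(\S)}^{1/(n+2)^2}$; the distinction between $\theta\in[\frac\pi2,\pi)$ and $\theta\in(0,\frac\pi2)$ is dictated by whether the $\theta$-balls are convex (and Euclidean distance suffices) or not -- in the latter case, when both centers are near $\p\mfR^{n+1}_+$, one must measure with the dual gauge $F_\theta^o$ to capture the geometry of the truncated balls. The main obstacle will be making the quantitative analysis at the wetted region $T=\p\Om\cap\p\mfR^{n+1}_+$ rigorous, in particular ensuring that the selection of centers and the ODE-type pointwise passage both respect the contact angle $\theta$ uniformly; this is where the replacement of the isotropic flow by $\zeta_{F_\theta}$, together with the identification $H_{\S,F_\theta}=H_\S$ from Lemma \ref{Lem-capillary-F-geometry}, is essential.
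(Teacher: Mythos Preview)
Your overall architecture---quantitative Montiel--Ros in the anisotropic gauge $F_\theta$, following \cite{JN23}---is the correct one, but two of your sub-arguments do not work as written and miss the main new idea of the paper.

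\textbf{Selection of centers.} You propose to plant centers at $o(x)=x-R\nu_{F_\theta}(x)$ for ``good'' $x\in\S$ and extract via Vitali. The paper does not do this: it selects the centers from the super-level set $\Om_{r_0}=\{u_\theta>r_0\}$ of the \emph{shifted distance function}, with $r_0=R-\ep^{1/(n+2)}$. The key step (Step~1 of the paper) is a dichotomy showing that any two points of $\Om_{r_0}$ are either $\ep^{1/(2(n+2))}$-close or at least $D_0$-apart; this is proved by a geodesic-plus-Pythagoras argument specific to $u_\theta$, combined with the volume estimate \eqref{ineq-Om_r}. The clusters of $\Om_{r_0}$ then give the $o_i$ directly, and the inclusion \eqref{ineq-JN23-(3-28)} propagates control to $(\Om_{r_0}+\mcW_\rho)\cap\overline{\mfR^{n+1}_+}$. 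A Vitali extraction from $\S$ gives disjoint caps but no obvious covering, and it is not clear how you would recover \eqref{ineq-JN23-(3-28)} from it.

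\textbf{Height dichotomy \eqref{ineq-o_i,E_n+1-Thm-1}--\eqref{ineq-o_i,E_n+1-Thm-2}.} This is the genuine gap. You derive it ``by comparing $o_i$ with the nearest true $\theta$-cap center via \eqref{esti-Hausdorff-distance}'', but this is circular: $S_\theta$ is \emph{defined} with the $o_i$ you have already chosen, so the Hausdorff estimate gives no constraint on $\langle o_i,E_{n+1}\rangle$. The paper's argument (its Step~2, highlighted in the Outline) is substantially more delicate and is the main novelty beyond \cite{JN23}: one compares the rescaled volumes $\mathfrak{b}_{\theta_i}=\abs{\mcW_\rho(o_i)\cap\mfR^{n+1}_+}/\rho^{n+1}$ at a \emph{small} radius $\rho=\tfrac14\tilde D_1$ and at a \emph{large} radius $\rho=r_0$, shows via \eqref{ineq-JN23-(3-31)}, \eqref{ineq-JN23-3-22}, \eqref{ineq-JN23-(3-28)} and the monotonicity Lemma~\ref{Lem-monotonicity-theta} that $0\le\mathfrak{b}_{\theta_i}-\mathfrak{b}_{\tilde\theta_i}\le C\ep^{1/(2(n+2))}$, and then inverts this through the explicit incomplete-beta formula \eqref{eq-b_theta} in a rather long case analysis to force either $\langle o_i,E_{n+1}\rangle$ small or $\langle o_i,E_{n+1}\rangle\ge(1+\cos\theta)r_0-C\ep^{1/(n+2)^2}$. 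This beta-function inversion is where the exponent drops to $1/(n+2)^2$; your accounting ``$L^n\to L^1$ loses $1/n$, integral-to-pointwise loses $1/(n+2)$'' does not produce $1/(n+2)^2$ and is not how the exponent arises.

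Finally, the $L^\infty$ passage in \eqref{esti-Hausdorff-distance} is not an ``ODE/regularity passage on the Weingarten operator'': the paper uses the Michael--Simon-based density lower bound (Proposition~\ref{Prop-density}) together with the area estimate \eqref{ineq-JN23-(3-23)} to upgrade ``$\S$ has small measure outside $\bigcup\mcW_{r_0+\ep_0}(o_i)$'' to a pointwise Hausdorff bound.
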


\begin{remark}
\normalfont
Some comments on Theorem \ref{Thm-Stability} are as follows.
\begin{enumerate}
\item [($i$)]
The target shape $S_\theta$ consists of finitely many (almost) $\theta$-caps or spheres that lie completely in $\overline{\mfR^{n+1}_+}$ of equal radii, due to the estimates on height of the centers \eqref{ineq-o_i,E_n+1-Thm-1}, \eqref{ineq-o_i,E_n+1-Thm-2}.
Moreover, these $\theta$-caps and spheres are almost mutually disjoint, thanks to the
estimates of distances \eqref{ineq-o_i-o_j-1}, \eqref{ineq-o_i-o_j-2}, \eqref{ineq-o_i-o_j-3}. 
\item [($ii$)]
If the $L^n$-deficit vanishes, i.e., $H_\S=\lambda$ for $\mcH^n$-a.e. $x\in\S$, then $\S$ has to be the finite union of disjoint $\theta$-caps or spheres that lie completely in $\overline{\mfR^{n+1}_+}$ of equal radii.
\item [($iii$)]
For the $S_\theta$ resulting from Theorem \ref{Thm-Stability}, consider now a ball with small enough radius, say $B_{\tilde\ep}$, which is disjoint from $\p\Om$ and far away from $\p\mfR^{n+1}_+$.
For the set $\widetilde\Om\coloneqq\Om\cup B_{\tilde\ep}$, it is not difficult to see that $\norm{H_\S-\lambda}_{L^p(\p B_{\tilde\ep})}$ will be sufficiently small if $1<p<n$, therefore revealing the sharpness of Theorem \ref{Thm-Stability}, in the sense that the $L^n$-deficit cannot be replaced by any weaker $L^p$-deficit. 
\end{enumerate}
\end{remark}

Further applications of our main result would be interesting to explore.
In view of \cites{DM19,XZ23} and \cites{MPS22,JN23}, it is natural to ask whether, for a large class of initial data, the global-in-time weak solutions (\textit{flat flow}) of the \textit{volume-preserving mean curvature flow},
in the capillary settings,
converge to a finite union of $\theta$-balls and Euclidean balls.

\subsection{Capillarity and anisotropy}
To prove our main result, we crucially use an idea of De Philippis-Maggi \cite{DePM15} (see also \cites{DeMasi22,LXZ23}), that half-space capillary problem is essentially an anisotropic problem, which can be seen by the definition of the following convex gauge:

Given a prescribed capillary angle $\theta\in(0,\pi)$, define the \textit{capillary gauge} $F_\theta:\mfR^{n+1}\ra\mfR^+$ by
\eq{\label{defn-F_theta}
F_\theta(\xi)
=\abs{\xi}-\cos\theta\left<\xi,E_{n+1}\right>.
}
As shown in \cite{LXZ23}*{(1.2)-(1.3)}, $\theta$-capillary hypersurfaces in the half-space are exactly anisotropic free boundary hypersurfaces (see \cites{JWXZ23,JWXZ23b} for the definition) in the half-space with respect to $F_\theta$, and the Heintze-Karcher-type inequality \eqref{ineq-hk-iso} can be rewritten as \footnote{For \eqref{ineq-HK-aniso}, we point out that a general Heintze-Karcher-type inequality in the half-space of this form has been proven in \cite{JWXZ23}, which holds for not only general convex gauges $F$, but also for a more general boundary contact angle condition.}
\eq{\label{ineq-HK-aniso}
\abs{\Om}
\leq\frac{n}{n+1}\int_\S \frac{F_\theta(\nu)}{H_{\S,F_\theta}}\rd\mcH^n,
}
where $H_{\S,F_\theta}$ is the anisotropic mean curvature of $\S$ with respect to $F_\theta$.

Let us use a few more words to explain this point, in fact, we will see in Lemma \ref{Lem-capillary-F-geometry} that the anisotropic outer normal, anisotropic mean curvature, and anisotropic principal eigenvalues of $\S$ with respect to $F_\theta$, are given by $\nu_{F_\theta}=\nu-\cos\theta E_{n+1}$, $H_{\S,F_\theta}=H_{\S}$, $\kappa_i^{F_\theta}=\kappa_i$, respectively.
In view of this, the shifted flow $\zeta_\theta$ can be then regarded as $\zeta_{F_\theta}$without shifting, in the sense that it is in fact an \textit{anisotropic geodesic normal flow}
\eq{
\zeta_{F_\theta}(x,t)
\coloneqq x-t\nu_F(x)
=x-t(\nu-\cos\theta E_{n+1})
=\zeta_\theta(x,t),
}
and hence \eqref{ineq-hk-iso-argument} reads as
\eq{\label{ineq-hk-aniso-argument}
&\abs{\Om}
\leq\abs{\zeta_{F_\theta}(Z)}
\leq\int_Z{\rm J}^Z\zeta_{F_\theta}\rd\mcH^{n+1}\\
=&\int_\S\int_0^{\frac1{\max_i\kappa_i^{F_\theta}(x)}}F_\theta(\nu)\prod_{i=1}^n(1-t\kappa^{F_\theta}_i(x))\rd\mcH^{n+1}
\leq\frac{n}{n+1}\int_\S\frac{F_\theta(\nu)}{H_{\S,F_\theta}}\rd\mcH^n.
}
\subsection{Outline of the proof}

As said, our proof is enlightened by the quantitative analysis of the Montiel-Ros argument conducted by Julin-Niinikoski \cite{JN23}, with significant modifications for the quantitative analysis to work in the capillary case.

Our first attempt in this direction is a Michael-Simon-type inequality for capillary hypersurfaces in the half-space, Theorem \ref{Thm-Michal-Simon}, which eliminates the boundary term $\int_{\p\S}f\rd\mcH^{n-1}$ that appears in the classical {Michael-Simon inequality} \cites{Allard72,MS73,Brendle21} by virtue of the capillary boundary condition.
As a direct consequence, we obtain a Topping-type inequality for capillary hypersurfaces in the half-space, Theorem \ref{Thm-Topping-ineq}.

Section \ref{Sec-4} is devoted to the a priori estimates resulting from the capillary structure.
A crucial and novel step is to use reflection and a new capillary gauge for the reflected surface inspired by \cite{JWXZ24},
which yields a new first variation structure in Lemma \ref{Lem:F-divergence-1st-variation}, by virtue of which
we obtain in Proposition \ref{Prop-JN23-Lem2.4} a priori estimates for $\lambda\in\mfR_+$\textemdash{the} prescribed constant that $H_\S$ is expected to be close to.
Moreover, we present a density-type estimate for the capillary hypersurfaces in Proposition \ref{Prop-density}, with the help of the Michael-Simon-type inequality.

The proof of our main result Theorem \ref{Thm-Stability} is intensively discussed in Section \ref{Sec-5}.
The core of our analysis is based on revisiting the Montiel-Ros-type argument \eqref{ineq-hk-aniso-argument} in a quantitative way.
Roughly speaking, we will have a close look at the error terms that arise each time we estimate with an inequality in \eqref{ineq-hk-aniso-argument}, and our goal is to show that these error terms are almost negligible, provided that the $L^n$-deficit is small enough.
This quantitative argument leads to 
Proposition \ref{Prop-JN23-Prop3.3}, in which the volume estimate \eqref{ineq-Om_r} for the super level-set with respect to the \textit{shifted distance function} to $\S$ (the capillary counterpart of the Euclidean distance function) is obtained.
With all these estimates, we are able to prove our main result.

The first step to approach Theorem \ref{Thm-Stability} is to find the points $o_1,\ldots,o_N$ that properly serve as the centers of our target caps and spheres.
Enlightened by the toy model case, it is natural to look at the super level-set $\Om_{r_0}$ (with respect to the shifted distance function) for $r_0$ that is close to the reference radius $R=\frac{n}\lambda$.
Once the centers are fixed, the rest of the proof will then be focused on proving that $\S$ is close to the union of spherical caps (spheres) given by $\bigcup_{i=1}^n\{\overline{\p B_{R;\theta}(o_i)\cap\mfR^{n+1}_+}\}$.

One obstacle that arises in the proof of Theorem \ref{Thm-Stability}, compared to the closed hypersurfaces case \cite{JN23}, is to confirm that these spherical caps (spheres) are actually close to $\theta$-caps or spheres that lie completely in $\overline{\mfR^{n+1}_+}$.
This will be discussed in {\bf Step 2}, and
the idea to solve this concern is based on the following observation:

If $\overline{\p B_{R;\theta}(o)\cap\mfR^{n+1}_+}$ is exactly a $\theta$-cap, that is, $o\in\p\mfR^{n+1}_+$, then $\overline{\p B_{r;\theta}(o)\cap\mfR^{n+1}_+}$ has to be a $\theta$-cap as well, for any $0<r<\infty$; if $\overline{\p B_{R;\theta}(o)\cap\mfR^{n+1}_+}$ is a sphere that is completely contained in the half-space, then so is $\overline{\p B_{r;\theta}(o)\cap\mfR^{n+1}_+}$ for any $0<r<R$.
In other words, if we look at the rescaled volume $\frac{\abs{B_{r;\theta}(o)\cap\mfR^{n+1}_+}}{r^{n+1}}$ of $\theta$-balls and complete balls, say $\mathfrak{b}_r$, then ${\rm osc}(\mathfrak{b}_r)$ is essentially vanishing, vice versa.
In view of this, we will first show in \eqref{ineq-b_theta-b_theta'} that for a small radius and a large enough radius which is close to $R$, ${\rm osc}(\mathfrak{b}_r)$ is controlled by the $L^n$-deficit.
Then by analyzing the expression of $\mathfrak{b}_r$, \eqref{eq-b_theta}, we obtain height estimates \eqref{ineq-o_i,E_n+1-Thm-1}, \eqref{ineq-o_i,E_n+1-Thm-2} of the centers $\{o_i\}$ from small oscillation, which solves the concern.

Another obstacle will be tackled in {\bf Step 3}, in which we try to show that the obtained balls are almost mutually disjoint.
In the closed hypersurfaces case, one needs only to show that the lower bound of $\abs{o_i-o_j}$ is almost $2R$, which is exactly what we wish to prove for the capillary case when $\theta\in[\frac\pi2,\pi)$, \eqref{ineq-o_i-o_j-1}.
For $\theta\in(0,\frac\pi2)$, we no longer expect this to be true, as one can easily see from the toy model case, where the boundaries of two $\theta$-caps of the same radii $R$ are almost mutually intersecting.
The Euclidean distance of their centers is then almost $2R\sin\theta$, not $2R$.
Fortunately, if we consider the anisotropic distance $F^o_\theta(o_i-o_j)$ of them, then $2R$ is again what we would expect.

We emphasize that, the second step is also crucial in the free boundary case.
Intuitively speaking, for free boundary hypersurfaces with small $L^n$-deficit, one may easily reflect them across the supporting hyperplane $\p\mfR^{n+1}_+$ to obtain $C^2$-closed hypersurfaces.
Since the reflection is isometric, the resulting closed hypersurfaces are of small $L^n$-deficit as well, and hence one may exploit \cite{JN23} to conclude that such closed hypersurfaces are close in the sense of Hausdorff distance to a family of disjoint spheres with equal radii.
However, this does not automatically imply that $\S$ itself has to be close to a family of disjoint free boundary caps.
More evidence is needed towards the ultimate goal, for example, the height estimates \eqref{ineq-o_i,E_n+1-Thm-1}, \eqref{ineq-o_i,E_n+1-Thm-2}.

\subsection{Acknowledgements}
The authors would like to thank Prof. Chao Xia for many insightful discussions and constant support.
X.Z. would also like to thank Prof. Julian Scheuer for his constant support and for many valuable discussions
on this project.
X.J. is supported by National Natural Science Foundation of China
(Grant No. 12401249), Natural Science Foundation of Jiangsu Province, China (Grant No. BK20241258).

\section{Notations and preliminaries}\label{Sec-2}
\subsection{Notations}

We will be working on the Euclidean space $\mbR^{n+1}$ for $n\geq1$, with  the Euclidean scalar product denoted by $\left<\cdot,\cdot\right>$ and the corresponding Levi-Civita connection denoted by $\na$. $\mbR^{n+1}_+=\{x:x_{n+1}>0\}$ is the open upper half-space and $E_{n+1}\coloneqq(0,\ldots,0,1)$.
We also write $\{E_1,\ldots,E_{n+1}\}$ to denote the coordinates basis for $\mfR^{n+1}$.
For $k\in\mbN^+$, $\mcH^k$ denotes the $k$-dimensional Hausdorff measure, and $\mcL^{n+1}$ denotes the Lebesgue measure in $\mfR^{n+1}$.
The Minkowski sum of 
two sets in $\mfR^{n+1}$ is denoted as
\eq{
A+B=\{x+y\in\mfR^{n+1}:x\in A, y\in B\}.
}
We adopt the following conventions regarding the use of the symbol $\abs{\cdot}$.
If we plug in an open set $\Om$ of $\mfR^{n+1}$, then we write
\begin{align*}
\abs{\Om}
\coloneqq\mathcal{L}^{n+1}(\Om).
\end{align*}
If we plug in a $k$-dimensional submanifold $M\subset\mfR^{n+1}$, then we mean
\begin{align*}
\abs{M}
\coloneqq\mathcal{H}^k(M).
\end{align*}
If we plug in a vector $v\in\mfR^{n+1},$ then $\abs{v}$ denotes the Euclidean length of $v$.

Let $\Om$ be a bounded open set (possibly not connected) in $\mbR^{n+1}_+$ with piecewise $C^2$-boundary $\p\Om=\S\cup T$, where $\S=\overline{\p\Om\cap\mbR^{n+1}_+}$ is a $\theta$-capillary hypersurface in $\overline{\mbR^{n+1}_+}$ and $T=\p\Om\cap\p\mbR^{n+1}_+$. 
In this paper, we will always assume that $\abs{\S}>0$, $\abs{T}>0$,
and that the corner given by $\Gamma\coloneqq\S\cap T=\p\S=\p T$, is a smooth codimension two submanifold in $\mfR^{n+1}$ with $\abs{\Gamma}>0$.


\subsection{Capillary hypersurfaces in the half-space}\label{Sec-2.1}

We use the following notation for normal vector fields. Let $\nu$ and $\bar N=-E_{n+1}$ be the outward unit normal to $\S$ and $\p\mbR^{n+1}_+$ (with respect to $\Om$) respectively. Let $\mu$ be the outward unit co-normal to $\Gamma=\p\S\subset \S$ and $\bar \nu$ be the outward unit co-normal to $\Gamma=\p T\subset T$. Under this convention, along $\Gamma$ the bases $\{\nu,\mu\}$ and $\{\bar \nu,\bar N\}$ have the same orientation in the normal bundle of $\p \S\subset \mbR^{n+1}$.
In particular, $\S$ is $\theta$-capillary if along $\Gamma$, \eq{\label{condi-mu-nu}
&&\mu=\sin \theta \bar N+\cos\theta \bar \nu,\quad \nu=-\cos \theta \bar N+\sin \theta \bar \nu.
}

We denote by $\na$,
${\rm div}$, the gradient,
and the divergence operator on $\mfR^{n+1}$ respectively, while by $\na^\S$, ${\rm div}_\S$ the gradient, and the divergence on $\S$, respectively.
Let $g$, $h$ and $H$ be the first, second fundamental forms and the mean curvature  of the smooth part of $\S$ respectively. Precisely, $h(X, Y)=\<\na_X\nu, Y\>$ and $H={\rm tr}_g(h)$.
Finally, we use ${\rm dist}_g(\cdot,\cdot)$ to denote the distance on $\S$ that is induced from $g$.

\subsubsection{Capillary gauge}\label{Sec-2-1-1}
Given a prescribed capillary angle $\theta\in(0,\pi)$, we consider the \textit{capillary gauge} $F_\theta:\mfR^{n+1}\ra\mfR^+$, defined as \eqref{defn-F_theta},
which is a \textit{convex gauge} in the sense that: when restricted to $\mbS^n$, $F_\theta$ is a smooth positive function, with
\eq{
A_{F_\theta}
\coloneqq D^2 F_\theta+F_\theta\sigma
}
positive definite, where $\sigma$ is the canonical metric on $\mbS^n$ and $D$ is the corresponding Levi-Civita connection on $\mbS^n$.
The \textit{Cahn-Hoffman map} associated with $F_\theta$ is given by
\eq{
\Phi:\mbS^n\to\mfR^{n+1},\text{ } \Phi(z)
=DF_\theta(z)+F_\theta(z)z,
}

We shall suppress the dependence of $F_\theta$ on $\theta$ and denote it simply by $F$ in all follows.
Moreover, the \textit{dual gauge} of $F$ is denoted by $F^o$, which is given by
\eq{
F^o(x)
=\sup\left\{\frac{\left<x,z\right>}{F(z)}: z\in\mbS^n\right\},
}
where $\left<\cdot,\cdot\right>$
denotes the standard Euclidean scalar product.

The following are well-known facts regarding convex gauge, see e.g., \cite{JWXZ23}. 

\begin{proposition}\label{basic-F}
For any $z\in \mbS^n$ and  $t>0$, the following statements hold.
\begin{itemize}
    \item[(i)] $F^o(tz)=tF^o(z)$.
\item[(ii)] $\left<\Phi(z),z\right>=F(z)$.
\item[(iii)] $F^o(\Phi(z))=1$.
\item[(iv)]  The unit
Wulff shape $\p\mcW$ can be interpreted by $F^o$ as \eq{
\p\mcW=\{x\in\mbR^{n+1}|F^o(x)=1\}.
}
\end{itemize}
\end{proposition}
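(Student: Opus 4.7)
The four statements are standard properties of convex one-homogeneous gauges and their polars, and the plan is to verify each directly from the definitions, using that $F$ extends from $\mbS^n$ to $\mfR^{n+1}$ as a positive, convex, one-homogeneous function (positive-homogeneity by $F(tz) := tF(z)$ for $t>0$, $z\in\mbS^n$, with convexity encoded by the positivity of $A_{F_\theta}$).

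For (i), the identity is immediate by pulling the positive scalar $t$ out of the supremum defining $F^o$. For (ii), I would split $\Phi(z) = DF(z) + F(z)z$ into its tangential and normal parts relative to $\mbS^n$ at $z$: since $DF(z)\in T_z\mbS^n = z^\perp$, the tangential piece is annihilated in $\langle \Phi(z), z\rangle$, leaving $F(z)\langle z, z\rangle = F(z)$.

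For (iii), the bound $F^o(\Phi(z))\geq 1$ follows by inserting $w=z$ into the supremum defining $F^o(\Phi(z))$ and applying (ii). For the reverse, I would identify $\Phi(z)$ with the ambient gradient $\nabla F(z)$ of the one-homogeneous extension (the tangential and normal decompositions match, with normal part $\langle \nabla F(z), z\rangle z = F(z)z$ by Euler), and then invoke convexity of $F$ to get $F(w)\geq F(z) + \langle \nabla F(z), w-z\rangle$ for every $w\in\mbS^n$; combined with Euler's relation $\langle \nabla F(z), z\rangle = F(z)$, this collapses to $\langle \Phi(z), w\rangle \leq F(w)$, hence $F^o(\Phi(z))\leq 1$.

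The inclusion $\Phi(\mbS^n)\subseteq\{F^o=1\}$ required for (iv) is immediate from (iii). For the reverse inclusion, given $x$ with $F^o(x)=1$, compactness of $\mbS^n$ ensures the supremum is attained at some $z^*\in\mbS^n$ with $\langle x, z^*\rangle = F(z^*)$. The only step that merits care is extracting $x = \Phi(z^*)$ from this optimality: I would take the tangential gradient of $z\mapsto \langle x, z\rangle/F(z)$ on $\mbS^n$ at $z^*$, which yields $x - \tfrac{\langle x, z^*\rangle}{F(z^*)}\nabla F(z^*)$ being normal to $\mbS^n$ at $z^*$, so $x-\nabla F(z^*) = \alpha z^*$ for some scalar $\alpha$; pairing with $z^*$ and using Euler together with $\langle x, z^*\rangle = F(z^*)$ forces $\alpha = 0$, hence $x=\nabla F(z^*) = \Phi(z^*)$. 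This Lagrange-multiplier step is the main (and only) place requiring a short computation; the rest is bookkeeping.
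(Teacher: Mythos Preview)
Your proof is correct and complete. The paper does not actually prove this proposition: it introduces it with the phrase ``The following are well-known facts regarding convex gauge'' and states the four items without argument. Your verification supplies exactly the standard derivations one would expect---homogeneity for (i), orthogonal decomposition of $\Phi(z)$ for (ii), the convexity/subgradient inequality for (iii), and a Lagrange-multiplier (first-order optimality) argument for the surjectivity direction in (iv). The only remark worth making is that your identification $\Phi(z)=\nabla F(z)$ and your use of convexity of the one-homogeneous extension rely on the positive-definiteness of $A_F$; you acknowledge this, and in any case for the specific capillary gauge $F_\theta(\xi)=|\xi|-\cos\theta\langle\xi,E_{n+1}\rangle$ of the paper the convexity is immediate.
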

A Wulff shape of radius $r$ centered at $x_0\in \mfR^{n+1}$ is given by \begin{align*}
\p\mcW_{r_0}(x_0)
=\{x\in\mbR^{n+1}|F^o(x-x_0)=r_0\}.
\end{align*}

We collect some background materials from geometric measure theory, and we refer to the monograph \cite{Mag12} for a detailed account.
	
Let $\Om$ be a Lebesgue measurable set in $\mfR^{n+1}$, we say that $\Om$ is a \textit{set of finite perimeter in $\mfR^{n+1}$} if
\eq{
\sup\left\{\int_\Om
{\rm div}X\rd\mcL^{n+1}: X\in C^1_c(\mfR^n;\mfR^n),  \abs{X}\leq1\right\}<\infty.
}
An equivalent characterization of sets of finite perimeter is that: there exists a $\mfR^{n+1}$-valued Radon measure $\mu_\Om$ on $\mfR^{n+1}$ such that for any $X\in C_c^1(\mfR^{n+1};\mfR^{n+1})$,
\eq{
\int_\Om{\rm div}X\rd\mcL^{n+1}
=\int_{\mfR^{n+1}}\left<X,\rd\mu_\Om\right>.
}
$\mu_\Om$ is called the \textit{Gauß-Green measure} of $\Om$.
The \textit{relative perimeter} of $\Om$ in an open subset $A\subset\mfR^{n+1}$, and the \textit{perimeter} of $\Om$, are defined as
\eq{
P(\Om;A)
=\Abs{\mu_\Om}(A),\text{ }
P(\Om)
=\Abs{\mu_\Om}(\mfR^{n+1}).
}
Given a convex gauge $F$,
the \textit{anisotropic perimeter relative to $\mfR^{n+1}_+$} is defined by
\eq{
P_F(\Om;\mfR^{n+1}_+)
=\sup\left\{\int_{\Om\cap\mfR^{n+1}_+}{\rm div}X\rd\mcL^{n+1}: X\in C_0^1(\mfR^{n+1}_+;\mfR^{n+1}), F^o(X)\leq1\right\}.
}
One can check by definition that the quantity $P_F(\Om;\mfR^{n+1}_+)$ is finite if and only if the classical relative perimeter $P(\Om;\mfR^{n+1}_+)<\infty$.
In particular, for a set of finite perimeter $\Om\subset\mfR^{n+1}_+$, the anisotropic perimeter relative to $\mfR^{n+1}_+$ (anisotropic surface energy) can be characterized by
\eq{\label{eq-P_F}
P_F(\Om;\mfR^{n+1}_+)
=\int_{\p^\ast \Om\cap \mfR^{n+1}_+}F(\nu_\Om)\rd\mcH^n,
}
where $\p^\ast\Om$ is the \textit{reduced boundary} of $\Om$ and $\nu_\Om$ is the \textit{measure-theoretic outer unit normal} to $\Om$.
Note that if $\Om$ is of $C^1$-boundary
in $\mfR^{n+1}_+$, then $\nu_\Om$ agrees with the classical outer unit normal $\nu$.


We record the following facts resulting from the definition of capillary gauge.
\begin{lemma}\label{Lem-capillary-F}
For the capillary gauge $F$ and for any $x,\xi\in\mfR^{n+1}$, there hold
\begin{enumerate}
\item \cite{LXZ23}*{(3.1)}:
\eq{
\na F(\xi)
=\frac{\xi}{\abs{\xi}}-\cos\theta E_{n+1},
}
\item \cite{LXZ23}*{Propositions 3.1, 3.2}:
\eq{
F^o(x)
=\frac{\abs{x}^2}{\sqrt{\cos^2\theta\left<x, E_{n+1}\right>^2+\sin^2\theta\abs{x}^2}-\cos\theta x\cdot E_{n+1}},
}
and the unit Wulff shape with respect to $F$ is given by
\eq{
\p\mcW_F
=&\{x:F^o(x)=1\}\\
=&\na F(\mbS^n)
=\mbS^n-\cos\theta E_{n+1}
=\{x:\abs{x+\cos\theta E_{n+1}}=1\}.
}
\item \cite{LXZ23}*{Proposition 3.3}:

For any set of finite perimeter $\Om\subset\mfR^{n+1}_+$, the relative anisotropic perimeter with respect to $F$ is given by
\eq{
P_F(\Om;\mfR^{n+1}_+)
=P(\Om;\mfR^{n+1}_+)-\cos\theta P(\Om;\p\mfR^{n+1}_+).
}
\end{enumerate}
\end{lemma}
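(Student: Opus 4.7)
The plan is to derive all three items directly from the explicit form $F(\xi) = \abs{\xi} - \cos\theta\left<\xi,E_{n+1}\right>$ of the capillary gauge, without invoking the general theory of convex gauges. For item (1), I would simply differentiate termwise: $\na\abs{\xi} = \xi/\abs{\xi}$ and $\na\left<\xi,E_{n+1}\right> = E_{n+1}$, so the claimed formula is immediate for $\xi\neq 0$.

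For item (2), my strategy is to first identify the unit Wulff shape $\p\mcW_F$ explicitly and then read off $F^o$ from the homogeneous characterization. Since $F$ is positively $1$-homogeneous, Euler's theorem yields the orthogonal decomposition $\na F(z) = DF(z) + F(z)z$ for $z \in \mbS^n$, so the Cahn--Hoffman image $\Phi(\mbS^n) = \na F(\mbS^n)$ equals $\mbS^n - \cos\theta E_{n+1}$ by item (1). Together with Proposition \ref{basic-F}(iv), this identifies $\mcW_F$ with the closed unit ball centered at $-\cos\theta E_{n+1}$. To extract $F^o(x)$, I would use the characterization $F^o(x) = \inf\{t > 0 : x/t \in \mcW_F\}$; the constraint $\abs{x/t + \cos\theta E_{n+1}} \leq 1$, after squaring and clearing denominators, becomes the quadratic inequality $-\sin^2\theta\, t^2 + 2\cos\theta\left<x,E_{n+1}\right> t + \abs{x}^2 \leq 0$ in $t$, whose positive root---after rationalizing the denominator by multiplying through by the conjugate factor $\sqrt{\cos^2\theta\left<x,E_{n+1}\right>^2 + \sin^2\theta\abs{x}^2} - \cos\theta\left<x,E_{n+1}\right>$---reduces to exactly the closed form stated.

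For item (3), I would apply the distributional Gauß--Green formula to the constant (hence divergence-free) vector field $E_{n+1}$ on $\Om$, obtaining $\int_{\p^\ast \Om}\left<\nu_\Om, E_{n+1}\right> \rd\mcH^n = 0$. Splitting the reduced boundary into its trace in $\mfR^{n+1}_+$ and its trace on $\p\mfR^{n+1}_+$, and using that $\nu_\Om = -E_{n+1}$ holds $\mcH^n$-a.e. on $\p^\ast \Om \cap \p\mfR^{n+1}_+$ (by the definition of reduced boundary, since $\Om \subset \mfR^{n+1}_+$), I would conclude $\int_{\p^\ast\Om\cap\mfR^{n+1}_+}\left<\nu_\Om,E_{n+1}\right>\rd\mcH^n = P(\Om;\p\mfR^{n+1}_+)$. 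Plugging this into the integral representation \eqref{eq-P_F} combined with $F(\nu_\Om) = 1 - \cos\theta\left<\nu_\Om,E_{n+1}\right>$ (noting $\abs{\nu_\Om} = 1$ on the reduced boundary) finishes the proof. The only genuinely delicate point is this normal identification on the flat portion of $\p\Om$, where one has to argue in the measure-theoretic sense; all remaining steps are elementary algebra or direct computation.
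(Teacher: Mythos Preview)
Your proposal is correct. Note, however, that the paper does not actually prove this lemma: each item is stated with a citation to \cite{LXZ23}, so there is no in-paper argument to compare against. What you have written is a self-contained derivation that the paper defers to the reference. The computations in items (1) and (2) are routine and accurate; in item (2) you implicitly use that the Cahn--Hoffman image $\Phi(\mbS^n)$ is the full Wulff boundary $\p\mcW_F$ rather than merely a subset---this is immediate here because $\Phi(\mbS^n)=\mbS^n-\cos\theta E_{n+1}$ is already a closed sphere and Proposition~\ref{basic-F}(iii) places it inside the closed convex hypersurface $\p\mcW_F$, forcing equality. Your handling of item (3), in particular the identification $\nu_\Om=-E_{n+1}$ for $\mcH^n$-a.e.\ point of $\p^\ast\Om\cap\p\mfR^{n+1}_+$ via the blow-up characterization of the reduced boundary, is the standard argument for sets of finite perimeter contained in a half-space and is rightly flagged as the only step requiring care.
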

It is then easy to see that the open unit Wulff ball is exactly given by
\eq{
\mcW_F
=B_1(-\cos\theta E_{n+1})
\eqqcolon B_{1;\theta}.
}
For simplicity, we adopt the following conventions: for any (anisotropic) radius $\rho>0$, the (Wulff) ball centered at the origin with radius $\rho$ is denoted by $B_\rho$ ($\mcW_\rho=B_{\rho;\theta}$).

If we denote the minimum and the maximum values of $F$ on $\mbS^n$ by
\eq{
m_F=\min_{\mbS^n}F,\quad
M_F=\max_{\mbS^n}F,
}
then we have
\eq{\label{defn-m-M-F}
m_F=1-\abs{\cos\theta},\quad
M_F=1+\abs{\cos\theta}.
}
Consequently, for the dual gauge $F^o$, one has
\eq{\label{defn-m-M-F^o}
m_{F^o}
=\frac1{M_F}=\frac1{1+\abs{\cos\theta}},\quad
M_{F^o}
=\frac1{m_F}=\frac1{1-\abs{\cos\theta}}.
}

\subsubsection{Capillary geometry and anisotropic geometry}
Let $\S\subset\overline{\mfR^{n+1}_+}$ be a $C^2$-hypersurface with $\p\S\subset \p\mfR^{n+1}_+$, which encloses a bounded domain $\Om$.
Let $\nu$ be the unit normal of $\S$ pointing outward $\Om$. 
The \textit{anisotropic normal} of $\S$ is given
\eq{
\nu_F
=\Phi(\nu)
=\na F(\nu)
=D F(\nu)+F(\nu)\nu,
}
and the \textit{anisotropic principal curvatures} $\{\kappa_i^F\}_{i=1}^n$ of $\S$ are given by the eigenvalues of the \textit{anisotropic Weingarten map} \eq{
\rd\nu_F
=A_F(\nu)\circ \rd\nu: T_x\S\to T_x\S. 
}
Correspondingly, the \textit{anisotropic mean curvature} of $\S$ is given by $H_F=\sum_{i=1}^n\kappa_i^F$.

Invoking the definition of capillary gauge, we may use direct computations to see that: 
\begin{lemma}\label{Lem-capillary-F-geometry}
Given $\theta\in(0,\pi)$, let $F=F_\theta$ be the capillary gauge and let $\S\subset\overline{\mfR^{n+1}_+}$ be a $C^2$-hypersurface.
Then at any $x\in\S$,
\begin{enumerate}
\item $F(\nu(x))=1-\cos\theta\left<\nu(x),E_{n+1}\right>$;
\item $\nu_F(x)=\nu(x)-\cos\theta E_{n+1}$;
\item $\rd\nu_F\mid_x=\rd\nu\mid_x$, that is, the anisotropic Weingarten map is in fact the classical Weingarten map.
Consequently, we have: 
$\kappa_i^F(x)=\kappa_i(x)$, and of course
$H_{\S,F}(x)=H_\S(x)$. 
\end{enumerate}
\end{lemma}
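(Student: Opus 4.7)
The plan is to verify all three assertions by direct computation, using the explicit form of the capillary gauge $F_\theta(\xi)=\abs{\xi}-\cos\theta\left<\xi,E_{n+1}\right>$ together with the identities already collected in Lemma \ref{Lem-capillary-F}. Since $\nu$ is a unit vector field on $\S$ and $E_{n+1}$ is a constant vector in $\mfR^{n+1}$, no substantial geometric machinery is required; the whole statement should drop out once the definitions are unfolded correctly.

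For item (1), the plan is simply to plug $\xi=\nu(x)$ into the definition of $F_\theta$ and use $\abs{\nu(x)}=1$. For item (2), I would use the identity $\nu_F(x)=\Phi(\nu(x))=DF(\nu(x))+F(\nu(x))\nu(x)$ together with the standard decomposition $\nabla F(z)=DF(z)+F(z)z$ for $z\in\mbS^n$, which holds because $F_\theta$ is positively $1$-homogeneous and $DF$ denotes the tangential gradient on $\mbS^n$. Combined with the explicit formula $\nabla F(\xi)=\xi/\abs{\xi}-\cos\theta E_{n+1}$ from Lemma \ref{Lem-capillary-F}(1), this immediately yields $\nu_F(x)=\nu(x)-\cos\theta E_{n+1}$.

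For item (3), I would differentiate the identity in (2) along any tangent direction $X\in T_x\S$. Because $\cos\theta E_{n+1}$ is a constant vector its differential vanishes, and therefore $\rd\nu_F\mid_x(X)=\rd\nu\mid_x(X)$. Hence the anisotropic and classical Weingarten maps coincide as endomorphisms of $T_x\S$, from which the equalities of principal curvatures $\kappa_i^F(x)=\kappa_i(x)$ and of the mean curvatures $H_{\S,F}(x)=H_\S(x)$ follow at once by taking eigenvalues and traces.

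The only mildly subtle point, and therefore what I would flag as the main obstacle in an otherwise routine verification, is justifying that $\rd\nu_F$ really is a well-defined endomorphism of $T_x\S$, so that the identification with $\rd\nu$ is meaningful rather than only formal. In the general anisotropic framework this requires the observation that $\nu_F$ takes values in the unit Wulff shape $\p\mcW_F=\mbS^n-\cos\theta E_{n+1}$ and that $T_{\nu_F(x)}\p\mcW_F$ is canonically identified with $T_x\S$ via orthogonality with $\nu(x)$; here the translated-sphere structure of $\p\mcW_F$ makes this identification transparent, since $\nu_F$ differs from the usual Gauss map only by a constant shift.
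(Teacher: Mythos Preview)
Your proposal is correct and matches the paper's approach exactly: the paper simply states that the lemma follows from ``direct computations'' invoking the definition of the capillary gauge, and what you have written is precisely that computation carried out in full. Your added remark about why $\rd\nu_F$ is a well-defined endomorphism of $T_x\S$ is a helpful clarification that the paper omits.
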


\subsection{More on capillarity}
\begin{proposition}[Trace estimate]\label{Prop-IBP-Capillary}
Given $\theta\in(0,\pi)$, let $\S\subset\overline{\mbR^{n+1}_+}$ be a compact $\theta$-capillary hypersurface, then it holds that: for any smooth function $f$ on $\S$,
\eq{\label{eq-divf-capillary}
-\sin\theta\int_{\p\S}f\rd\mcH^{n-1}
=\int_\S\left<\na^\S f,E_{n+1}\right>\rd\mcH^n-\int_\S Hf\left<\nu,E_{n+1}\right>\rd\mcH^n.
}
In particular,
\eq{\label{eq-divf-capillary-2}
\sin\theta\abs{\p\S}
=\int_\S H\left<\nu,E_{n+1}\right>\rd\mcH^n.
}
Moreover, one has
\eq{\label{ineq-JXZ23-(20)}
\int_\S\left<\nu,E_{n+1}\right>\rd\mcH^n
=\abs{T},
}
so that
\eq{\label{ineq-T<=Sigma}
\abs{T}\leq\abs{\S}.
}
\end{proposition}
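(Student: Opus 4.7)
The plan is to reduce \eqref{eq-divf-capillary} to the tangential divergence theorem applied to the vector field $fE_{n+1}^T$ on $\S$, where $E_{n+1}^T$ denotes the projection of $E_{n+1}$ onto $T\S$; the capillary condition \eqref{condi-mu-nu} is then used to convert the resulting boundary integral into $-\sin\theta\int_{\p\S}f\rd\mcH^{n-1}$. The remaining assertions follow by specializing to $f\equiv 1$ and by an ambient divergence theorem argument on $\Om$.

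For \eqref{eq-divf-capillary}, decompose $E_{n+1} = E_{n+1}^T + \left<\nu,E_{n+1}\right>\nu$ pointwise along $\S$. Because $E_{n+1}$ is parallel in $\mfR^{n+1}$, working in an orthonormal frame $\{e_i\}$ on $\S$ yields ${\rm div}_\S E_{n+1} = 0$ and ${\rm div}_\S(\left<\nu, E_{n+1}\right>\nu) = H\left<\nu, E_{n+1}\right>$, whence ${\rm div}_\S E_{n+1}^T = -H\left<\nu,E_{n+1}\right>$; Leibniz then gives
\begin{align*}
{\rm div}_\S\bigl(fE_{n+1}^T\bigr) = \left<\na^\S f, E_{n+1}\right> - fH\left<\nu, E_{n+1}\right>.
\end{align*}
Since $\mu$ is tangent to $\S$, \eqref{condi-mu-nu} together with $\bar N = -E_{n+1}$ and $\bar\nu \perp E_{n+1}$ (as $\bar\nu$ lies in the tangent space to $\p\mfR^{n+1}_+$) gives $\left<\mu, E_{n+1}^T\right> = \left<\mu, E_{n+1}\right> = -\sin\theta$. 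Integrating the displayed identity over $\S$ and invoking the divergence theorem on $(\S,g)$ with co-normal $\mu$ then produces \eqref{eq-divf-capillary}. Setting $f\equiv 1$ kills the gradient term and recovers \eqref{eq-divf-capillary-2}.

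For \eqref{ineq-JXZ23-(20)}, I would apply the classical divergence theorem on $\Om$ to the constant ambient field $E_{n+1}$: since ${\rm div}\, E_{n+1} = 0$ and $\p\Om = \S\cup T$ with outward unit normals $\nu$ along $\S$ and $-E_{n+1}$ along $T\subset\p\mfR^{n+1}_+$, we obtain
\begin{align*}
0 = \int_\S\left<\nu, E_{n+1}\right>\rd\mcH^n - \abs{T}.
\end{align*}
Finally, \eqref{ineq-T<=Sigma} drops out from \eqref{ineq-JXZ23-(20)} and the pointwise bound $\left<\nu, E_{n+1}\right>\leq 1$. No substantive obstacle is anticipated; the only point demanding care is the consistent bookkeeping of signs between $\bar N$, $\mu$, $\nu$, and $E_{n+1}$ dictated by the capillary condition \eqref{condi-mu-nu}.
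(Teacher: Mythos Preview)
Your proposal is correct and follows essentially the same route as the paper: both apply the tangential divergence theorem to $fE_{n+1}^T$ on $\S$, use \eqref{condi-mu-nu} to identify $\left<\mu,E_{n+1}\right>=-\sin\theta$, and then specialize to $f\equiv1$. For \eqref{ineq-JXZ23-(20)} the paper simply cites an earlier work, whereas you supply the underlying divergence-theorem argument on $\Om$; this is the standard derivation and not a genuine departure.
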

\begin{proof}
For any $x\in\S$, we denote by $E_{n+1}^T(x)$ the tangential part of $E_{n+1}$ with respect to $T_x\S$, a direct computation then yields
\eq{
{\rm div}_\S(fE_{n+1}^T)
=\left<\na^\S f,E_{n+1}\right>-Hf\left<\nu,E_{n+1}\right>,
}
thus
\eq{
\int_{\p\S}f\left<\mu,E_{n+1}\right>\rd\mcH^{n-1}
=&\int_\S\div_\S(fE_{n+1}^T)\rd\mcH^n\\
=&\int_\S\left<\na^\S f,E_{n+1}\right>\rd\mcH^n-\int_\S Hf\left<\nu,E_{n+1}\right>\rd\mcH^n,
}
\eqref{eq-divf-capillary} follows from the capillary condition \eqref{condi-mu-nu}.
Choosing $f=1$ in \eqref{eq-divf-capillary}, we obtain \eqref{eq-divf-capillary-2}.

\eqref{ineq-JXZ23-(20)} can be found in \cite{JXZ23}*{(20)}, and the last assertion follows easily from \eqref{ineq-JXZ23-(20)}, which completes the proof.
\end{proof}

For a fixed unit sphere, we know that either it has a fixed contact angle $\theta\in(0,\pi)$ with $\p\mfR^{n+1}_+$, or it stays away from (at most mutually tangent with) $\p\mfR^{n+1}_+$.
In the latter case, we adopt the convention that $\theta=\pi$.
In both cases, we use 
\eq{
\mathfrak{b}_\theta
}
to denote the volume of the enclosed region of it with the supporting hyperplane $\p\mfR^{n+1}_+$.
Note that $\mathfrak{b}_\theta$ can be explicitly computed, see e.g., \cite{Li11}:
\eq{\label{eq-b_theta}
\mathfrak{b}_\theta
=
\begin{cases}
\frac{\om_{n+1}}2I_{\sin^2\theta}(\frac{n+2}2,\frac12),\quad&\theta\in(0,\frac\pi2),\\
\om_{n+1}-\frac{\om_{n+1}}2I_{\sin^2\theta}(\frac{n+2}2,\frac12),\quad&\theta\in[\frac\pi2,\pi),
\end{cases}
}
where $\om_{n+1}$ is the Lebesgue measure of unit ball in $\mfR^{n+1}$, $I_{\sin^2\theta}(\frac{n+2}2,\frac12)$ is the \textit{regularized incomplete beta function} given by
\eq{
I_{\sin^2\theta}(\frac{n+2}2,\frac12)
=\frac{\int_0^{\sin^2\theta}t^{\frac{n}2}(1-t)^{-\frac12}\rd t}
{\int_0^1t^{\frac{n}2}(1-t)^{-\frac12}\rd t}.
}
It follows that $\mathfrak{b}_\theta$ is increasing on $\theta\in(0,\pi]$.

We have the following monotonicity lemma.
\begin{lemma}\label{Lem-monotonicity-theta}
Given $o\in\overline{\mfR^{n+1}_+}$, $\theta\in(0,\pi)$.
For any $0<\rho<\infty$, let $\theta_\rho\in(0,\pi]$ be the contact angle of $\p B_{\rho;\theta}(o)$ with $\p\mfR^{n+1}_+$.
Then for any $0<\rho_1<\rho_2<\infty$, there hold
\eq{
\theta\leq\theta_{\rho_2}\leq\theta_{\rho_1}\leq\pi,\\
\mathfrak{b}_\theta\leq\mathfrak{b}_{\theta_{\rho_2}}\leq\mathfrak{b}_{\theta_{\rho_1}}\leq\om_{n+1}.
}
Moreover, the equality case $\theta=\theta_{\rho_2}$ happens if and only if $o\in\p\mfR^{n+1}_+$.
\end{lemma}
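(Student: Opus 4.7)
The plan is to derive an explicit expression for $\cos\theta_\rho$ in terms of $\rho$, $\theta$, and the height $h\coloneqq\langle o,E_{n+1}\rangle\geq 0$, and then reduce both chains of inequalities to elementary monotonicity, combined with the already-noted monotonicity of $\mathfrak{b}_\theta$ following \eqref{eq-b_theta}.

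By definition, $B_{\rho;\theta}(o)$ is the Euclidean ball of radius $\rho$ centered at $c\coloneqq o-\rho\cos\theta\,E_{n+1}$, so that $c_{n+1}=h-\rho\cos\theta$. Whenever $\p B_{\rho;\theta}(o)\cap\p\mfR^{n+1}_+\neq\es$, at any contact point $p$ the outward unit normal to the sphere is $\nu(p)=(p-c)/\rho$, and the capillary relation \eqref{condi-mu-nu} yields
\begin{equation*}
\cos\theta_\rho=\langle\nu(p),E_{n+1}\rangle=-\frac{c_{n+1}}{\rho}=\cos\theta-\frac{h}{\rho}.
\end{equation*}
Since $h\geq 0$ and $\cos\theta\leq 1$, one checks that $c_{n+1}+\rho=h+\rho(1-\cos\theta)\geq 0$, so the ball is never entirely below $\p\mfR^{n+1}_+$. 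It lies strictly above $\p\mfR^{n+1}_+$ precisely when $\rho<h/(1+\cos\theta)$, in which regime the convention fixes $\theta_\rho=\pi$, consistently with letting $\arccos(\cos\theta-h/\rho)\to\pi$ as the argument decreases to $-1$.

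For the first chain of inequalities, since $h/\rho\geq 0$ we have $\cos\theta_\rho\leq\cos\theta$, hence $\theta_\rho\geq\theta$. For $\rho_1\leq\rho_2$, the map $\rho\mapsto h/\rho$ is non-increasing, so $\cos\theta_{\rho_2}\geq\cos\theta_{\rho_1}$, giving $\theta_{\rho_2}\leq\theta_{\rho_1}$; the ordering is preserved in the boundary regime where one or both $\theta_{\rho_i}=\pi$. The bound $\theta_{\rho_1}\leq\pi$ is automatic, and the equality $\theta_{\rho_2}=\theta$ forces $h/\rho_2=0$, i.e., $h=0$, which is precisely $o\in\p\mfR^{n+1}_+$.

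For the second chain, the paragraph following \eqref{eq-b_theta} records that $\theta\mapsto\mathfrak{b}_\theta$ is increasing on $(0,\pi]$ with $\mathfrak{b}_\pi=\om_{n+1}$, so the ordering $\theta\leq\theta_{\rho_2}\leq\theta_{\rho_1}\leq\pi$ immediately translates into $\mathfrak{b}_\theta\leq\mathfrak{b}_{\theta_{\rho_2}}\leq\mathfrak{b}_{\theta_{\rho_1}}\leq\om_{n+1}$. There is no substantive obstacle in the argument; the only care needed is to handle the degenerate regime in which $B_{\rho;\theta}(o)\subset\mfR^{n+1}_+$ via the convention $\theta_\rho=\pi$, which is naturally consistent with the formula $\cos\theta_\rho=\cos\theta-h/\rho$ at the limiting radius $\rho=h/(1+\cos\theta)$.
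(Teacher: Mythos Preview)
Your proof is correct and follows essentially the same approach as the paper's: both derive the key identity $\cos\theta_\rho=\cos\theta-\langle o,E_{n+1}\rangle/\rho$ at a contact point and read off the monotonicity of $\theta_\rho$ (and hence of $\mathfrak{b}_{\theta_\rho}$) directly from it, with the degenerate case $\theta_\rho=\pi$ handled by convention. Your write-up is slightly more detailed in checking the regime where the ball lies entirely in $\mfR^{n+1}_+$, but the argument is the same.
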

\begin{proof}
Notice that if $B_{\rho;\theta}(o)\cap\p\mfR^{n+1}_+=\emptyset$, then we readily have
\eq{
\theta_\rho=\pi,\text{ }\mathfrak{b}_{\theta_\rho}=\mathfrak{b}_{\pi}=\om_{n+1}.
}
In the case that $\p B_{\rho;\theta}(o)\cap\p\mfR^{n+1}_+\neq\emptyset$, let us fix any $z\in\p B_{\rho;\theta}(o)\cap\p\mfR^{n+1}_+$.

Clearly, one has
\eq{\label{ineq-theta_rho-theta}
\cos\theta_{\rho}
=\left<\frac{z-(o-\rho\cos\theta E_{n+1})}\rho,E_{n+1}\right>
=\cos\theta-\frac{\left<o,E_{n+1}\right>}\rho,
}
since $\left<o,E_{n+1}\right>\geq0$, it follows that $\cos\theta_\rho$ is non-decreasing and hence $\theta_\rho$ is non-increasing on $\rho$. Moreover, $\cos\theta_\rho\leq\cos\theta$, implying that $\theta_\rho\geq\theta$, with equality holds if and only if $\left<o,E_{n+1}\right>=0$.

The monotonicity of $\mathfrak{b}_{\theta_\rho}$ follows immediately, which completes the proof.
\end{proof}


\section{Michael-Simon-type inequality and Topping-type inequality}
\begin{theorem}\label{Thm-Michal-Simon}
Given $\theta\in(0,\pi)$,
let $\S\subset\overline{\mbR^{n+1}_+}$ ($n\in\mbN$) be a compact $\theta$-capillary hypersurface.
Let $f$ be a positive smooth function on $\S$.
If $n\geq2$, then 
\eq{\label{ineq-Michael-Simon}
\norm{f}_{L^\frac{n}{n-1}(\S)}
\leq\sigma(n,\theta)\left(\int_\S\abs{\na^\S f}\rd\mcH^n+\int_\S f\abs{H_\S}\rd\mcH^n\right)
}
for some positive constant $\sigma$ that depends on $n,\theta$.

If $n=1$, then
\eq{\label{ineq-Michael-Simon-n=1}
\int_{\S\cap\{x:f(x)\geq1\}}f\rd\mcH^1
\leq\sigma(n=1,\theta)\int_\S f\rd\mcH^1\left(\int_\S\abs{\na^\S f}\rd\mcH^1+\int_\S f\abs{H_\S}\rd\mcH^1\right)
}
for some positive constant $\sigma$ that depends on $n=1,\theta$.
\end{theorem}
\begin{proof}
{\bf Case 1. }$n\geq2$.

Define $\bar\sigma(n)=n\left(\frac{(n+2)\om_{n+1}}{2\om_2}\right)^\frac1n$, then we obtain from the Michael-Simon inequality \cite{Brendle21}*{Theorem 1} (with $m=2$ chosen therein) and the Cauchy-Schwarz inequality:
\eq{
\bar\sigma(n)\norm{f}_{L^\frac{n}{n-1}(\S)}
\leq\int_{\p\S}f\rd\mcH^{n-1}+\int_\S\abs{\na^\S f}\rd\mcH^n+\int_\S f\abs{H_\S}\rd\mcH^n.
}
Taking \eqref{eq-divf-capillary} into account, we obtain
\eq{
\bar\sigma(n)\norm{f}_{L^\frac{n}{n-1}(\S)}
\leq&-\frac{1}{\sin\theta}\int_\S\left<\na^\S f,E_{n+1}\right>\rd\mcH^n+\frac{1}{\sin\theta}\int_\S H_\S f\left<\nu,E_{n+1}\right>\rd\mcH^n\\
&+\int_\S\abs{\na^\S f}\rd\mcH^n+\int_\S f\abs{H_\S}\rd\mcH^n\\
\leq&(1+\frac{1}{\sin\theta})\left(\int_\S\abs{\na^\S f}\rd\mcH^n+\int_\S f\abs{H_\S}\rd\mcH^n\right),
}
from which we conclude \eqref{ineq-Michael-Simon}.

{
{\bf Case 2. }$n=1$.

In this case, we write $V=\mcH^1\llcorner\S\otimes\de_{T_x\S}$ as the $1$-varifold in $\mfR^2$ induced by the capillary hypersurface $\S$.
It is easy to check that $\de V=H_\S\nu\mcH^1\llcorner\S+\mu\mcH^0\llcorner\p\S$, and the total variation of first variation is just $\norm{\de V}=\abs{H_\S}\mcH^1\llcorner\S+\mcH^0\llcorner\p\S$, a Radon measure on $\mfR^2$.

We use \cite{Allard72}*{Theorem 7.1} (with $k=1$ therein) for the function $f$ on the induced $1$-varifold $V$, to obtain
\eq{
\int_{\S\cap\{x:f(x)\geq1\}}f\rd\mcH^1
\leq C\int_\S f\rd\mcH^1\left(\int_{\p\S}f\rd\mcH^0+\int_\S\abs{\na^\S f}\rd\mcH^1+\int_\S f\abs{H_\S}\rd\mcH^1\right)
}
for some positive constant $C$ depending only on $n=1$.
Taking \eqref{eq-divf-capillary} into account, the assertion follows.
}
\end{proof}

As a by-product, we obtain the following Topping-type inequality, which controls the upper bound of the extrinsic diameter of $\S$, denoted by
\eq{
d_{\rm ext}(\S)
\coloneqq\max_{x,y\in\S}\abs{x-y}.
}
\begin{theorem}\label{Thm-Topping-ineq}
Given $n\in\mbN^+$, $\theta\in(0,\pi)$,
let $\S\subset\overline{\mbR^{n+1}_+}$ be a compact, connected $\theta$-capillary hypersurface.
There holds that
\eq{\label{ineq-Topping-capillary}
d_{\rm ext}(\S)
\leq C(n,\theta)\int_\S\abs{H}^{n-1}\rd\mcH^n,
}
for some positive constant $C$ depending only on $n,\theta$.
\end{theorem}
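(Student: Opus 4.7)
The plan is to follow the classical proof of Topping's diameter estimate for closed hypersurfaces, taking the capillary Michael-Simon inequality (Theorem~\ref{Thm-Michal-Simon}) as the analytic input in place of the usual one. The crucial structural observation is that the boundary integral $\int_{\p\S}f\rd\mcH^{n-1}$ produced by Brendle's original Michael-Simon inequality has been absorbed via the contact angle condition \eqref{condi-mu-nu} in the proof of Theorem~\ref{Thm-Michal-Simon}; as a result, \eqref{ineq-Michael-Simon} has exactly the same analytic form as the closed-case Michael-Simon inequality. Consequently Topping's scheme transfers essentially verbatim, with the constant $C(n,\theta)$ inheriting its $\theta$-dependence from $\sigma(n,\theta)=(1+\csc\theta)/\bar\sigma(n)$.

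Concretely, first I would pick $p,q\in\S$ realizing the extrinsic diameter, i.e., $\abs{p-q}=d_{\rm ext}(\S)$, and introduce $V(r)\coloneqq\mcH^n(\S\cap\overline{B_r(p)})$. Applying Theorem~\ref{Thm-Michal-Simon} to a standard Lipschitz approximation of $\chi_{\overline{B_r(p)}}$, passing to the limit, and bounding the slice measure via the coarea formula by $\mcH^{n-1}(\S\cap\p B_r(p))\leq V'(r)$, one obtains
\eq{
V(r)^{\frac{n-1}n}\leq\sigma(n,\theta)\left(V'(r)+\int_{\S\cap\overline{B_r(p)}}\abs{H}\rd\mcH^n\right)
}
for a.e.\ $r\in(0,d_{\rm ext}(\S)]$. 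Then, combining this with the H\"older estimate $\int_{\S\cap B_r(p)}\abs{H}\leq V(r)^{(n-2)/(n-1)}\bigl(\int_\S\abs{H}^{n-1}\rd\mcH^n\bigr)^{1/(n-1)}$ and running Topping's dichotomy between the regime in which $V'(r)$ dominates (forcing Euclidean volume growth $V(r)\gtrsim r^n$) and the regime in which the $L^{n-1}$-integral of $H$ dominates, one would integrate the resulting first-order inequality for $V^{1/n}$ up to $r=d_{\rm ext}(\S)$. The connectedness of $\S$ ensures that $V(r)>0$ on the full interval, so the integration is nondegenerate and delivers \eqref{ineq-Topping-capillary}.

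The main obstacle I anticipate is not the analytic input, which is already in place, but the somewhat delicate ODE/case-analysis that converts the pointwise-a.e.\ inequality into a global diameter bound. This step is entirely parallel to Topping's original treatment, and the only genuine adjustment required in the capillary setting is the bookkeeping of the $\theta$-dependent constant $\sigma(n,\theta)$. The case $n=1$ is trivial: for curves, $\int\abs{H}^{0}\rd\mcH^1=\abs{\S}$ already dominates the extrinsic diameter of any connected $\S$, so only $n\geq 2$ requires the Michael-Simon-based argument above, which is precisely the range for which Theorem~\ref{Thm-Michal-Simon} is stated.
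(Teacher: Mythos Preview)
Your analytic input is correct and matches the paper: the capillary Michael--Simon inequality applied to a radial cutoff yields $V(x,r)^{(n-1)/n}\leq\sigma(n,\theta)\bigl(V'(x,r)+\int_{\S\cap B_r(x)}\abs{H}\rd\mcH^n\bigr)$ for a.e.\ $r$, and the case $n=1$ is indeed trivial. The gap is in how you pass from this differential inequality to the diameter bound. You work with a \emph{single} center $p$ and propose to integrate an ODE for $V^{1/n}$ over $[0,d_{\rm ext}(\S)]$; this does not close. After H\"older and division by $V^{(n-1)/n}$ one is left with a term of order $I^{1/(n-1)}V^{-1/(n(n-1))}$ (with $I=\int_\S\abs{H}^{n-1}$), which for $V(r)\sim cr^n$ behaves like $r^{-1/(n-1)}$ and is not integrable near $0$ when $n=2$. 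More structurally, your dichotomy only controls the measure of the set where $V'$ dominates (through $V(d_{\rm ext})\leq\abs{\S}$) but says nothing about the set where the curvature term dominates, so no bound on $d_{\rm ext}(\S)$ emerges.

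What the paper does---and what Topping's argument actually is---differs from your outline. One first proves a pointwise dichotomy: for every $x\in\S$ and large $R$, either the maximal function $\sup_{r\leq R}\frac1n r^{-1/(n-1)}V(x,r)^{-(n-2)/(n-1)}\int_{\S\cap B_r(x)}\abs{H}\rd\mcH^n$ or the density $\inf_{r\leq R}r^{-n}V(x,r)$ exceeds some fixed $\delta=\delta(n,\theta)$ (this is where the differential inequality above is used, via ODE comparison). For $R$ large the density alternative fails globally, so every $x$ carries a radius $r(x)$ with $r(x)\leq C\int_{\S\cap B_{r(x)}(x)}\abs{H}^{n-1}\rd\mcH^n$. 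One then takes a shortest path in $\S$ from $p$ to $q$, covers it by a disjointed Vitali family $\{B_{r(z_i)}(z_i)\}$ of such balls, and sums. The covering argument at many centers along the path---not a single-center ODE integration---is the missing idea in your proposal.
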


The proof follows essentially from \cite{Topping08} and we present it in Appendix \ref{App-1} for readers' convenience.

\section{A priori estimates}\label{Sec-4}

{
In this section, we establish the a priori estimates needed in the proof of our main theorem.
\subsection{Reflection and anisotropy}
To our aim, we consider the reflection across the supporting hyperplane as follows:
Let $\Om$ be a bounded open set (possibly not connected) in $\mbR^{n+1}_+$ with piecewise $C^2$-boundary $\p\Om=\S\cup T$, where $\S=\overline{\p\Om\cap\mbR^{n+1}_+}$ is a $\theta$-capillary hypersurface in $\overline{\mbR^{n+1}_+}$ and $T=\p\Om\cap\p\mbR^{n+1}_+$.
Let $\mathscr{R}:\mfR^{n+1}\ra\mfR^{n+1}, x\mapsto x-2x_{n+1}E_{n+1}$ be the reflection across $\p\mfR^{n+1}$, and denote by
\eq{
\widetilde\Om
=\mathscr{R}(\Om),\quad
\widetilde\S
=\mathscr{R}(\S)
}
the reflected set and hypersurface respectively, then write the unions as
\eq{
\widehat\Om
=\Om\cup\widetilde\Om,\quad
\widehat\S
=\S\cup\widetilde\S.
}
Clearly, $\widehat\Om$ has piecewise $C^2$-boundary $\p\widehat\Om=\widehat\S$ and hence is a set of finite perimeter.
Also,
\eq{\label{eq:widehatOm-widehatSigma}
\abs{\widehat\Om}
=2\abs{\Om},\quad
\abs{\widehat\S}
=2\abs{\S}.
}

\begin{lemma}\label{Lem:1stvariation-widehat-S}
For any $X\in C_c^1(\mfR^{n+1},\mfR^{n+1})$, the first variation formula holds:
\eq{
\int_{\widehat\S}{\rm div}_{\widehat\S}X\rd\mcH^n
-2\cos\theta\int_T{\rm div}_{\p\mfR^{n+1}_+}X\rd\mcH^n
=\int_{\widehat\S}H\left<X,\nu\right>\rd\mcH^n,
}
where $H=H_\S$ on $\S$ and $H=H_{\widetilde\S}$ on $\widetilde\S$, $\nu$ is the outer unit co-normal with respect to $\widehat\Om$ along $\widehat\S$ whenever it is well-defined.
\end{lemma}
\begin{proof}
Since $\S$ is a $\theta$-capillary hypersurface, the outer unit co-normal $\mu=\cos\theta\bar\nu-\sin\theta E_{n+1}$ along $\p\S$.
By using tangential divergence theorem on $\S$ and $T$ (recall that $T$ is flat), we find
\eq{\label{formu-1st-variation}
\int_\S{\rm div}_\S X\rd\mcH^n
=&\int_\S H_\S\left<X,\nu\right>\rd\mcH^n
+\int_{\p\S}\left<X,\mu\right>\rd\mcH^{n-1}\\
=&\int_\S H_\S\left<X,\nu\right>\rd\mcH^n+\cos\theta\int_{\p\S}\left<X,\bar\nu\right>\rd\mcH^{n-1}-\sin\theta\int_{\p\S}\left<X,E_{n+1}\right>\rd\mcH^{n-1}\\
=&\int_\S H_\S\left<X,\nu\right>\rd\mcH^n+\cos\theta\int_{T}{\rm div}_{\p\mfR^{n+1}_+}X\rd\mcH^n-\sin\theta\int_{\p\S}\left<X,E_{n+1}\right>\rd\mcH^{n-1}.
}
Note that $\widetilde\S$ is also a $\theta$-capillary hypersurface, with the outer unit co-normal $\mu_{\widetilde\S}=\cos\theta\bar\nu+\sin\theta E_{n+1}$,
repeating the above computation we obtain
\eq{
\int_{\widetilde\S}{\rm div}_{\widetilde\S}X\rd\mcH^n=\int_{\widetilde\S}H_{\widetilde\S}\left<X,\nu\right>\rd\mcH^n+\cos\theta\int_{T}{\rm div}_{\p\mfR^{n+1}_+}X\rd\mcH^n+\sin\theta\int_{\p\S}\left<X,E_{n+1}\right>\rd\mcH^{n-1}.
}
Summing these two identities, the assertion follows.
\end{proof}

To proceed, motivated by \cite{JWXZ24}, we consider the gauge function
\eq{
F_\ast(\xi)
\coloneqq F(-\xi),\quad
\xi\in\mfR^{n+1},
}
which is induced geometrically by the convex body that is centrally
symmetric to $\p\mcW$.
In our case, it is easy to see that $F_\ast(\xi)=\abs{\xi}+\cos\theta\left<\xi,E_{n+1}\right>$, and hence
\eq{\label{defn-F-F_ast}
F=F_\theta,\quad
F_\ast=F_{\pi-\theta}.
}
\begin{lemma}\label{Lem:S-F-F_ast}
Let $\widehat\S=\S\cup\widetilde\S$ and the gauge functions $F$, $F_\ast$ be given as above.
For any $X\in C_c^1(\mfR^{n+1},\mfR^{n+1})$,
let $\{f_t\}_{\abs{t}<\ep}$ be the induced one parameter family of $C^1$-diffeomorphisms of $X$, i.e., $\{f_t\}_{\abs{t}<\ep}$ solves the Cauchy's problems
\eq{
\frac{\p}{\p t}f_t(\xi)=X(f_t(\xi)),\quad\xi\in\mfR^{n+1},\\
f_0(\xi)
=\xi,\quad\xi\in\mfR^{n+1}.
}
Then there holds
\eq{
\frac{\rd}{\rd t}\mid_{t=0}\left(\int_{f_t(\S)} F(\nu_t)\rd\mcH^n
+\int_{f_t(\widetilde\S)}F_\ast(\nu_t)\rd\mcH^n\right)
=\int_{\widehat\S}{\rm div}_{\widehat\S}X\rd\mcH^n
-2\cos\theta\int_T{\rm div}_{\p\mfR^{n+1}_+}X\rd\mcH^n.
}
\end{lemma}
\begin{proof}
Since $f_t$ is of class $C^1$, we know that $f_t(\widehat\Om)=f_t(\Om)\cup f_t(\widetilde\Om)$ has piecewise regular boundary, and the sets $f_t(\Om)$, $f_t(\widetilde\Om)$ share the common boundary $f_t(T)$.
Denote by $\bar N_t$ the outer unit normal along $f_t(T)$ with respect to $f_t(\Om)$, then the outer unit normal along $f_t(T)$ with respect to $f_t(\widetilde\Om)$ is just $-\bar N_t$.
Denote by $\nu_t$ the outer unit normal along $f_t(\widehat\S)=f_t(\S)\cup f_t(\widetilde\S)$ with respect to $f_t(\widehat\Om)$ whenever it is well-defined.

Since ${\rm div}(\pm E_{n+1})=0$, using divergence theorem, we find
\eq{
0
=&\int_{f_t(\Om)}{\rm div}E_{n+1}\rd x
=\int_{f_t(\S)}\left<\nu_t,E_{n+1}\right>\rd\mcH^n+\int_{f_t(T)}\left<\bar N_t,E_{n+1}\right>\rd\mcH^n,\\
0=&\int_{f_t(\widetilde\Om)}{\rm div}(-E_{n+1})\rd x
=\int_{f_t(\widetilde\S)}\left<\nu_t,-E_{n+1}\right>\rd\mcH^n
+\int_{f_t(T)}\left<\bar N_t,E_{n+1}\right>\rd\mcH^n,
}
taking \eqref{defn-F-F_ast} into account, we have
\eq{\label{eq:f_t(S)}
\int_{f_t(\S)}F(\nu_t)\rd\mcH^n
=&\abs{f_t(\S)}-\cos\theta\int_{f_t(\S)}\left<\nu_t,E_{n+1}\right>\rd\mcH^n\\
=&\abs{f_t(\S)}+\cos\theta\int_{f_t(T)}\left<\bar N_t,E_{n+1}\right>\rd\mcH^n,
}
and
\eq{\label{eq:f_t(tilde-S)}
\int_{f_t(\widetilde\S)}F_\ast(\nu_t)\rd\mcH^n
=&\abs{f_t(\widetilde\S)}+\cos\theta\int_{f_t(\widetilde\S)}\left<\nu_t,E_{n+1}\right>\rd\mcH^n\\
=&\abs{f_t(\widetilde\S)}+\cos\theta\int_{f_t(T)}\left<\bar N_t,E_{n+1}\right>\rd\mcH^n.
}
Note also that by area formula, we have
\eq{
\abs{f_t(\widehat\S)}
=&\abs{f_t(\S)}+\abs{f_t(\widetilde\S)}
=\int_\S{\rm J}^\S f_t\rd\mcH^n+\int_{\widetilde\S}{\rm J}^{\widetilde\S}f_t\rd\mcH^n,\\
\int_{f_t(T)}\left<\bar N_t,E_{n+1}\right>\rd\mcH^n
=&\int_T\left<\bar N_t(f_t(x)),E_{n+1}\right>{\rm J}^Tf_t(x)\rd\mcH^n(x).
}
Since $\{E_1,\ldots,E_n\}$ is an orthonormal frame of $T\subset\p\mfR^{n+1}_+$, by $\left<\p_t\bar N_t(f_t(x)),\bar N_t(f_t(x)\right>=0$, we get
\eq{
&\p_t\bar N_t(f_t(x))
=\sum_{i=1}^n\left<\p_t\bar N_t(f_t(x)),\na_{E_i}(f_t(x))\right>\na_{E_i}(f_t(x))\\
=&-\sum_{i=1}^n\left<\bar N_t(f_t(x)),\p_t\na_{E_i}(f_t(x))\right>\na_{E_i}(f_t(x))\\
=&-\sum_{i=1}^n\left<\bar N_t(f_t(x)),\na_{E_i}\p_t(f_t(x))\right>\na_{E_i}(f_t(x))
=-\sum_{i=1}^n\left<\bar N_t(f_t(x)),\na_{E_i}X(f_t(x))\right>\na_{E_i}(f_t(x)),
}
so that by virtue of $f_0={\rm id}$, we obtain
\eq{
\frac{\rd}{\rd t}\mid_{t=0}\left<\bar N_t(f_t(x)),E_{n+1}\right>
=-\sum_{i=1}^n\left<\bar N_0(x),\na_{E_i}X(x)\right>\left<E_i,E_{n+1}\right>
=0.
}
Taking into account that
$\frac{\rd}{\rd t}\mid_{t=0}{\rm J}^Tf_t(x)={\rm div}_{\p\mfR^{n+1}_+}X(x)$, and $\bar N_0=-E_{n+1}$, we thus find
\eq{
\frac{\rd}{\rd t}\mid_{t=0}
\left(\int_T\left<\bar N_t(f_t(x)),E_{n+1}\right>{\rm J}^Tf_t(x)\rd\mcH^n(x)\right)
=-\int_T{\rm div}_{\p\mfR^{n+1}_+}X\rd\mcH^n.
}
Summing \eqref{eq:f_t(S)} and \eqref{eq:f_t(tilde-S)}, then differentiating the equation, the assertion follows. 
\end{proof}

\begin{lemma}\label{Lem:F-divergence-1st-variation}
Under the notations above, the first variation formula holds:
For any $X\in C_c^1(\mfR^{n+1},\mfR^{n+1})$,
\eq{
\int_\S F(\nu){\rm div}_{\S,F}X\rd\mcH^n
+\int_{\widetilde\S} F_\ast(\nu){\rm div}_{\widetilde\S,F_\ast}X\rd\mcH^n
=\int_{\widehat\S}H\left<X,\nu\right>\rd\mcH^n,
}
where ${\rm div}_{\S,F}X\coloneqq{\rm div}X-\frac{\left<\na F(\nu),(\na X)^\ast[\nu]\right>}{F(\nu)}$ is called the boundary $F$-divergence of $X$ with respect to $\S$, and
${\rm div}_{\widetilde\S,F_\ast}X$ is understood similarly.
\end{lemma}
\begin{proof}
Let $f_t$ be the induced one parameter family of $C^1$-diffeomorphisms of $X$.
By similar computations as \cite{Mag12}*{Exercise 20.7} (see also \cite{DeMasi22}*{Lemma 5.28, and (5.16)}), we find
\eq{
\frac{\rd}{\rd t}\mid_{t=0}\int_{f_t(\S)}F(\nu)\rd\mcH^n
=\int_\S F(\nu){\rm div}_{\S,F}X\rd\mcH^n,\\
\frac{\rd}{\rd t}\mid_{t=0}\int_{f_t(\widetilde\S)}F_\ast(\nu)\rd\mcH^n
=\int_{\widetilde\S} F_\ast(\nu){\rm div}_{\widetilde\S,F_\ast}X\rd\mcH^n.
}
On the other hand, by Lemmas \ref{Lem:S-F-F_ast} and \ref{Lem:1stvariation-widehat-S} we have
\eq{
&\frac{\rd}{\rd t}\mid_{t=0}\left(\int_{f_t(\S)}F(\nu)\rd\mcH^n+\int_{f_t(\widetilde\S)}F_\ast(\nu)\rd\mcH^n\right)\\
=&\int_{\widehat\S}{\rm div}_{\widehat\S}X\rd\mcH^n
-2\cos\theta\int_T{\rm div}_{\p\mfR^{n+1}_+}X\rd\mcH^n
=\int_{\widehat\S}H\left<X,\nu\right>\rd\mcH^n.
}
Combining the above computations, the assertion follows.
\end{proof}

\subsection{A priori estimates}
We record the following density-type estimate in \cite{JN23}*{Proposition 2.3}, which is essentially proven in \cite{MPS22}*{Lemma 2.1}:
\begin{proposition}\label{Prop-JN23-2.3}
Let $E\subset\mfR^{n+1}$ be a set of finite perimeter with $P(E)>0$ and $0<\beta<1$.
Then there exists a positive constant $c=c(n,\beta)$ such that
\eq{
r_{E,\beta}
\coloneqq
\sup\{r\in\mfR_+:\exists x\in\mfR^{n+1}\text{ with }\abs{B_r(x)\cap E}\geq\beta\abs{B_r(x)}\}
\geq c\frac{\abs{E}}{P(E)}.
}
\end{proposition}

The next proposition concerns with the a priori estimates resulting from the capillary structure.

\begin{proposition}\label{Prop-JN23-Lem2.4}
Given $n\in\mbN^+$, $\theta\in(0,\pi)$,
let $\S\subset\overline{\mbR^{n+1}_+}$ be a compact $\theta$-capillary hypersurface.
Let $\Om$ denote the enclosed region of $\S$ with $\p\mbR^{n+1}_+$. 

Given $\lambda\in\mbR_+$ and $1\leq C_0<\infty$, then there exists a positive constant $\bar C=\bar C(n,\theta,C_0)$ such that $P(\Om)\leq C_0$ and $\abs{\Om}\geq C_0^{-1}$ implies
\begin{enumerate}[label=(\roman*)]
\item 
The estimate on $\lambda$:
\eq{\label{ineq-JN23-Lem2.4-(i)}
\bar C^{-1}-\bar C\norm{H_\S-\lambda}_{L^1(\S)}
\leq\lambda
\leq \bar C+\bar C\norm{H_\S-\lambda}_{L^1(\S)}.
}
\item 
For the family of connected components of $\Om$, say $\{\Om_i\}_{i\in J}$, if each connected component is adhering to $\p\mfR^{n+1}_+$, so that $\S_i\coloneqq\overline{\p\Om_i\cap\mfR^{n+1}_+}$ is a $\theta$-capillary hypersurface, then $\# J\leq\bar C(1+\norm{H_\S-\lambda}^n_{L^n(\S)})$,
and $d_{\rm ext}(\S_i)\leq \bar C(1+\norm{H_\S-\lambda}^{n-1}_{L^{n-1}(\S)})$.
\end{enumerate}
\end{proposition}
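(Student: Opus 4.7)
The plan is to prove the estimates in the order (i)-lower, (ii), (i)-upper, since the last step will require a bootstrap using the diameter bound from (ii). For the lower bound on $\lambda$, I apply the capillary Michael--Simon inequality (Theorem \ref{Thm-Michal-Simon}) with $f\equiv 1$ and the triangle inequality:
\eq{
|\S|^{\frac{n-1}{n}}\le\sigma(n,\theta)\int_\S|H_\S|\rd\mcH^n\le\sigma(n,\theta)\bigl(\norm{H_\S-\lambda}_{L^1(\S)}+\lambda|\S|\bigr).
}
Since $B^+_{d_m}(x_m)\subset\Om$ forces the flat $n$-disk $B_{d_m}(x_m)\cap\p\mfR^{n+1}_+$ to lie in $\overline T$, one has $|T|\ge\omega_n d_m^n$; combined with $|T|\le|\S|$ from \eqref{ineq-T<=Sigma} and $|\S|\le P(\Om)\le C_0$, solving the above for $\lambda$ yields the stated lower bound.

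For (ii), I apply Theorem \ref{Thm-Michal-Simon} to each component $\S_i$ with $f\equiv 1$ and then H\"older's inequality to get $\int_{\S_i}|H_\S|^n\ge\sigma^{-n}$ uniformly in $i$; summing over $i\in J$ and expanding $\int_\S|H_\S|^n\le 2^{n-1}\bigl(\norm{H_\S-\lambda}_{L^n(\S)}^n+\lambda^n|\S|\bigr)$ controls $\# J$. For the diameter of each $\S_i$, the capillary Topping inequality (Theorem \ref{Thm-Topping-ineq}) gives $d_{\rm ext}(\S_i)\le C(n,\theta)\int_{\S_i}|H_\S|^{n-1}\le C\bigl(\norm{H_\S-\lambda}_{L^{n-1}(\S)}^{n-1}+\lambda^{n-1}|\S|\bigr)$. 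Once the upper bound on $\lambda$ is in hand, $|\S|\le C_0$ absorbs the $\lambda^n$ and $\lambda^{n-1}$ factors into the final constant $\bar C$.

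For the upper bound on $\lambda$, I apply the capillary Minkowski-type identity, derived component-wise from the divergence theorem on $\S_i$ and on $T_i$, with an anchor $x_0^{(i)}\in T_i\subset\p\mfR^{n+1}_+$:
\eq{
\int_{\S_i}H_\S\langle x-x_0^{(i)},\nu\rangle\rd\mcH^n=nP_F(\Om_i;\mfR^{n+1}_+).
}
Summing over $i$ and writing $H_\S=\lambda+(H_\S-\lambda)$ gives $\lambda(n+1)|\Om|=nP_F(\Om;\mfR^{n+1}_+)-\sum_{i\in J}\int_{\S_i}(H_\S-\lambda)\langle x-x_0^{(i)},\nu\rangle\rd\mcH^n$. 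The main term is bounded by $\bar C_0\coloneqq n(1+|\cos\theta|)C_0^2/(n+1)$ using $P_F\le(1+|\cos\theta|)C_0$ and $|\Om|\ge C_0^{-1}$; the error is estimated by $\max_i d_{\rm ext}(\Om_i)\cdot\norm{H_\S-\lambda}_{L^1(\S)}$, and $d_{\rm ext}(\Om_i)$ is controlled by $d_{\rm ext}(\S_i)$ plus a term of the same order arising from the planar diameter of $T_i$.

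The main obstacle is the apparent circularity between this Minkowski estimate and the Topping-based diameter bound from (ii), since Topping nominally involves $\lambda$. I plan to resolve it by a continuity/bootstrap argument: at the CMC endpoint $\norm{H_\S-\lambda}_{L^1}=0$ the Minkowski identity reduces to $\lambda(n+1)|\Om|=nP_F$, so $\lambda\le\bar C_0$ directly; by continuity of the geometric quantities in the $L^1$-deficit, $\lambda\le 2\bar C_0$ persists in a quantitative neighborhood of the CMC endpoint, on which Topping delivers a $\lambda$-independent diameter bound that substitutes back to close the loop into the linear estimate $\lambda\le\bar C+\bar C\norm{H_\S-\lambda}_{L^1(\S)}$; outside this neighborhood the bound is trivialized by enlarging $\bar C$ to absorb the residual dependence coming from the a priori perimeter constraint.
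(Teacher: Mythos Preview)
Your argument for the upper bound on $\lambda$ has a genuine gap: the circularity you flag between the Minkowski identity (with the position field $x-x_0^{(i)}$) and the Topping diameter bound is real, and the proposed ``continuity/bootstrap'' does not resolve it. There is no family being deformed---$\S$ and $\lambda$ are fixed---so there is no CMC endpoint to perturb from. Unwinding your chain gives
\[
\lambda \le A + B\bigl(1+\lambda^{n-1}|\S|+\|H_\S-\lambda\|_{L^{n-1}}^{n-1}\bigr)\|H_\S-\lambda\|_{L^1},
\]
and for $n\ge 2$ the term $B\lambda^{n-1}|\S|\,\|H_\S-\lambda\|_{L^1}$ cannot be absorbed into the left side without an a priori bound on $\lambda$; nor can the large-deficit regime be ``trivialized by the perimeter constraint,'' since $\|H_\S\|_{L^1}$ is not controlled by $n,d_m,\theta,C_0$ alone (think of a $\theta$-cap with a thin spike).

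The paper avoids this entirely by proving both bounds in (i) \emph{at once}, testing \eqref{eq-JN23-(2-3)} not with the position field but with a radially symmetric $X(x)=f(|x-x_r|)(x-x_r)$, centered at a point $x_r\in\partial\mfR^{n+1}_+$ and a scale $r\in[r_0,R_0]$ for which $|B_r(x_r)\cap\Om|=\tfrac14|B_r^+(x_r)|$ (such a scale exists by Proposition \ref{Prop-JN23-2.3}). The profile $f$ is built so that $f(t)\sim 1/t$ for large $t$, forcing $|X|\le C(\theta)$ \emph{uniformly on $\mfR^{n+1}$}; hence the error $\int_\S(\lambda-H_\S)\langle X,\nu\rangle$ is bounded by $C(\theta)\|H_\S-\lambda\|_{L^1}$ with no diameter factor. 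The construction also guarantees two-sided positive bounds (depending only on $n,d_m,\theta,C_0$) for both $\int_\Om\mathrm{div}\,X$ and $\int_\S F(\nu)\,\mathrm{div}_{\Om,F}X$, the latter via the relative isoperimetric inequality in $B_r^+(x_r)$. This yields \eqref{ineq-JN23-Lem2.4-(i)} directly; part (ii) is then deduced essentially as you propose (Michael--Simon per component for $\#J$, Topping for the diameter), but now with $\lambda$ already controlled. A secondary gap: your lower bound via Theorem \ref{Thm-Michal-Simon} needs $n\ge2$; the paper treats $n=1$ separately in (ii) using the position field in \eqref{formu-1st-variation}.
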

}

\begin{proof}
{
We shall work with the set $\widehat\Om=\Om\cup\widetilde\Om$.
First from \eqref{eq:widehatOm-widehatSigma} it is easy to see that
\eq{
\abs{\widehat\Om}\geq2C_0^{-1},\quad
\abs{\widehat\S}
\leq2C_0.
}
By the global isoperimetric inequality we have
\eq{
\abs{\widehat\Om}
\leq C(n)\abs{\widehat\S}^\frac{n+1}{n}
\leq C(n) C_0^\frac{n+1}{n}.
}
To prove \eqref{ineq-JN23-Lem2.4-(i)}, we observe that since the reflection map $\mathscr{R}$ is an isometry, we must have
\eq{\label{eq:H-lambda-H_sigma-lambda}
\norm{H-\lambda}_{L^1(\widehat\S)}
=\norm{H_\S-\lambda}_{L^1(\S)}
+\norm{H_{\widetilde\S}-\lambda}_{L^1(\widetilde\S)}
=2\norm{H_\S-\lambda}_{L^1(\S)}.
}

By Proposition \ref{Prop-JN23-2.3} (choosing $E=\widehat\Om$ and $\beta_0=\frac14$), there exists positive constants $r_0=r_0(n,C_0)$ and $R_0=R_0(n,C_0)$, such that for some $r_0\leq r\leq R_0$ and $x_r\in\mbR^{n+1}_+$, there holds
\eq{
\abs{B_r(x_r)\cap\widehat\Om}
=\frac14\abs{B_r(x_r)}.
}

The relative isoperimetric inequality in ball (see e.g., \cite{Mag12}*{Proposition 12.37}), applying on $\widehat\Om\cap B_r(x_r)$ then yields
\eq{\label{ineq-relative-iso-truncated}
P(\widehat\Om;B_r(x_r))
\geq C(n)r^n
\geq C(n,C_0).
}

A direct computation, together with Lemma \ref{Lem:F-divergence-1st-variation}, shows that for any $X\in C_c^1(\mfR^{n+1},\mfR^{n+1})$,
\eq{\label{eq-JN23-(2-3)}
\lambda\int_{\widehat\Om}{\rm div}X\rd x
&=\int_{\widehat\S}\lambda\left<X,\nu\right>\rd\mcH^n\\
&=\int_{\widehat\S} H\left<X,\nu\right>\rd\mcH^n+\int_{\widehat\S}(\lambda-H)\left<X,\nu\right>\rd\mcH^n\\
&=\int_\S F(\nu){\rm div}_{\S,F} X\rd\mcH^n+\int_{\widetilde\S}F_\ast(\nu){\rm div}_{\widetilde\S,F_\ast} X\rd\mcH^n+\int_{\widehat\S}(\lambda-H)\left<X,\nu\right>\rd\mcH^n.
}

Define a radially symmetric vector field $X:\mbR^{n+1}\ra\mbR^{n+1}$ as
\eq{
X(x)
\coloneqq f(\abs{x-x_r})(x-x_r),
}
where $f:\mfR\ra\mfR$ is a non-increasing $C^1$-function to be specified later.

Clearly, $X\in C_c^1(\mfR^{n+1},\mfR^{n+1})$ and we wish to prove the first assertion by testing \eqref{eq-JN23-(2-3)} with such $X$. To this end, we compute on $\S$:
\eq{
\na X(x)
=&f(\abs{x-x_r}){\rm Id}+\frac{f'(\abs{x-x_r})}{\abs{x-x_r}}(x-x_r)\otimes(x-x_r),\\
{\rm div}X(x)
=&(n+1)f(\abs{x-x_r})+f'(\abs{x-x_r})\abs{x-x_r},
}
and by virtue of Proposition \ref{basic-F} ($ii$),
\eq{
&\left<\na F(\nu),(\na X)^\ast[\nu]\right>\\
=&F(\nu)\left(f(\abs{x-x_r})+f'(\abs{x-x_r})\frac{\abs{x-x_r}}{F(\nu)}\left<\frac{x-x_r}{\abs{x-x_r}},\nu\right>\left<\frac{x-x_r}{\abs{x-x_r}},\na F(\nu)\right>\right).
}
Recall that $\na F(\nu)=\nu-\cos\theta E_{n+1}$, taking \eqref{defn-m-M-F} into account,
we thus obtain
\eq{
G\coloneqq
&\frac1{F(\nu)}\left<\frac{x-x_r}{\abs{x-x_r}},\nu\right>\left<\frac{x-x_r}{\abs{x-x_r}},\na F(\nu)\right>\\
=&\frac1{F(\nu)}\left<\frac{x-x_r}{\abs{x-x_r}},\nu\right>^2-\frac{\cos\theta}{F(\nu)}\left<\frac{x-x_r}{\abs{x-x_r}},\nu\right>\left<\frac{x-x_r}{\abs{x-x_r}},E_{n+1}\right>\\
\in&[-\frac{\abs{\cos\theta}}{1-\abs{\cos\theta}},\frac{1+\abs{\cos\theta}}{1-\abs{\cos\theta}}],
}
from which we deduce,
at any $x\in\S$,
\eq{
{\rm div}_{\S,F}X
=F(\nu)\left(nf(\abs{x-x_r})+\abs{x-x_r}f'(\abs{x-x_r})(1-G)\right),
}
and satisfies the estimates (recall that $f'\leq0$)
\eq{
&{\rm div}_{\S,F}X
\geq F(\nu)\left(nf(\abs{x-x_r}+\frac1{1-\abs{\cos\theta}}\abs{x-x_r}f'(\abs{x-x_r}))\right),\\
&{\rm div}_{\S,F}X
\leq F(\nu)\left(nf(\abs{x-x_r}-\frac{2\abs{\cos\theta}}{1-\abs{\cos\theta}}\abs{x-x_r}f'(\abs{x-x_r}))\right).
}

Now we construct $f$, let us first look at the one variable function $g(t)=\frac1{t-m}$.
A direct computation then shows that, when $t>m$, $ng(t)+\frac1{1-\abs{\cos\theta}}tg'(t)\geq0$ is equivalent to
\eq{
m
\leq(1-\frac1{n(1-\abs{\cos\theta})})t.
}
Let us set
\eq{
m_0
\coloneqq(1-\frac1{1-\abs{\cos\theta}})\frac{5r}{2(1-\abs{\cos\theta})}
\leq(1-\frac1{n(1-\abs{\cos\theta})})\frac{5r}{2(1-\abs{\cos\theta})},
}
and define $f(t)=\frac1{t-m_0}$ on $[\frac{5r}{2(1-\abs{\cos\theta})},\infty)$, it follows
from the above observation that $nf(t)+\frac1{1-\abs{\cos\theta}}tf'(t)\geq0$ on this interval.
Also, it is easy to see that
\eq{
f(\frac{5r}{2(1-\abs{\cos\theta})})
=&\frac{2(1-\abs{\cos\theta})^2}{5r},\\
-f'(\frac{5r}{2(1-\abs{\cos\theta})})
=&\frac{4(1-\abs{\cos\theta})^4}{25r^2},
}
which in turn implies that, on the interval $[\frac{5r}{2(1-\abs{\cos\theta})},\infty)$,
\eq{\label{ineq-t-f'(t)-1}
-tf'(t)
\leq(1-\abs{\cos\theta})nf(t)
\leq(1-\abs{\cos\theta})nf(\frac{5r}{2(1-\abs{\cos\theta})})
=\frac{2n(1-\abs{\cos\theta})^3}{5r}.
}

To proceed, we define a non-increasing $C^1$-function $f:\mfR\ra\mfR$ by
\eq{
f(t)
=
\begin{cases}
\frac{4(1-\abs{\cos\theta})^2+(1-\abs{\cos\theta})^3}{10r},\text{ }&t\leq\frac{3r}{2(1-\abs{\cos\theta})},\\
\frac1{t-m_0},\text{ }&t\geq\frac{5r}{2(1-\abs{\cos\theta})},
\end{cases}
}
and for which the condition $-f'(t)\leq\frac{4(1-\abs{\cos\theta})^4}{25r^2}$ holds on $[\frac{3r}{2(1-\abs{\cos\theta})},\frac{5r}{2(1-\abs{\cos\theta})}]$.
In fact, the existence of such $f$ is ensured by the following computation
\eq{
\frac{f(\frac{3r}{2(1-\abs{\cos\theta})})-f(\frac{5r}{2(1-\abs{\cos\theta})})}{\frac{r}{1-\abs{\cos\theta}}}
=\frac{(1-\abs{\cos\theta})^4}{10r^2}
<\frac{4(1-\abs{\cos\theta})^4}{25r^2}.
}

Now we verify that
$nf(t)+\frac1{1-\abs{\cos\theta}}tf'(t)\geq0$ on $[\frac{3r}{2(1-\abs{\cos\theta})},\frac{5r}{2(1-\abs{\cos\theta})}]$, which can be done through the following computations: on such an interval,
\eq{
nf(t)
\geq nf(\frac{5r}{2(1-\abs{\cos\theta})})
=\frac{2n(1-\abs{\cos\theta})^2}{5r},
}
while
\eq{
-\frac1{1-\abs{\cos\theta}}tf'(t)
\leq \frac{4(1-\abs{\cos\theta})^3}{25r^2}t
\leq\frac{2(1-\abs{\cos\theta})^2}{5r}
\leq\frac{2n(1-\abs{\cos\theta})^2}{5r}.
}
We have two by-products of this estimate.
First, the estimate \eqref{ineq-t-f'(t)-1} is extended to hold on $[\frac{3r}{2(1-\abs{\cos\theta})},\frac{5r}{2(1-\abs{\cos\theta})}]$.
Second, we have $nf(t)+\frac1{1-\abs{\cos\theta}}tf'(t)\geq0$ on $[\frac{3r}{2(1-\abs{\cos\theta})},\infty]$, and because $f'\leq0$, we find
\eq{
(n+1)f(t)+tf'(t)
>nf(t)+\frac1{1-\abs{\cos\theta}}tf'(t)
\geq0,\quad t\in[0,\infty].
}

Since the above computations hold for $F=F_\theta$ on $\S$, we could therefore use the same argument to show that these computations, especially \eqref{ineq-t-f'(t)-1}, also hold for $F_\ast=F_{\pi-\theta}$ on $\widetilde\S$.

By setting $t(x)=\abs{x-x_r}$, $h_1=h_1(n,\theta)\coloneqq(n+1)\frac{4(1-\abs{\cos\theta})^2+(1-\abs{\cos\theta})^3}{10}>0$, we have shown that ${\rm div}X=(n+1)f(t)+f'(t)t$ satisfies:
$0<{\rm div}X\leq\frac{h_1}{r}$ everywhere and ${\rm div}X=\frac{h_1}{r}$ in $B_r(x_r)$, thus obtaining
\eq{\label{ineq-JN23-(2-4)}
\frac{h_1}{4R_0}\abs{B_{r_0}(x_r)}
\leq&\frac{h_1}{4r}\abs{B_r(x_r)}
=\frac{h_1}{r}\abs{B_r(x_r)\cap\widehat\Om}
\\
\leq&\int_{\widehat\Om}{\rm div}X\rd x
\leq\frac{h_1}{r}\abs{\widehat\Om}
\leq\frac{C(n,\theta)}{r_0}C_0^\frac{n+1}{n}.
}

On the other hand,
combining the estimates above, especially \eqref{ineq-t-f'(t)-1}, we find that
\eq{
0
\leq{\rm div}_{\S,F}X
\leq M_F\left(nf(0)+\frac{4n\abs{\cos\theta}(1-\abs{\cos\theta})^2}{5r}\right)
\eqqcolon \frac{h_2(n,\theta)}r
}
on $\S$, and
\eq{
{\rm div}_{\S,F}X
=nF(\nu)f(0)
\geq\frac{h_3(n,\theta)}{r}
}
on $\S\cap B_r(x_r)$;
note also that these estimates also hold for ${\rm div}_{\widetilde\S,F}X$ on $\widetilde\S$,
from which we deduce
\eq{\label{ineq-JN23-(2-5)}
\frac{C(n,\theta,C_0)}{R_0}&\overset{\eqref{ineq-relative-iso-truncated}}{\leq}\frac{h_3P(\widehat\Om; B_r(x_r))}{r}\\
\leq&
\int_\S{\rm div}_{\S,F}X\rd\mcH^n+\int_{\widetilde\S}{\rm div}_{\widetilde\S,F_\ast}X\rd\mcH^n
\leq\frac{h_2}{r}\abs{\widehat\S}
\leq\frac{C(n,\theta,C_0)}{r_0}.
}

To complete the proof of \eqref{ineq-JN23-Lem2.4-(i)}, we recall that on $t>\frac{5r}{2(1-\abs{\cos\theta})}$,
\eq{
tf(t)
=1+\frac{m_0}{t-m_0}
\leq1+m_0f(\frac{5r}{2(1-\abs{\cos\theta})})
=1-\abs{\cos\theta},
}
while on $[0,\frac{5r}{2(1-\abs{\cos\theta})}]$,
\eq{
tf(t)
\leq\frac{5r}{2(1-\abs{\cos\theta})}f(0)
=C(\theta),
}
inferring that $\abs{X}\leq C(\theta)$ on $\mfR^{n+1}$.
This fact,
in conjunction with \eqref{eq:H-lambda-H_sigma-lambda}, \eqref{eq-JN23-(2-3)}, \eqref{ineq-JN23-(2-4)}, \eqref{ineq-JN23-(2-5)}, leads to \eqref{ineq-JN23-Lem2.4-(i)}.

}

To prove {\bf(ii)}, we consider the following two cases separately:

{\bf Case 1.} $n=1$.

For any connected component $\Om_i$, up to a translation along $\p\mfR^{n+1}_+$, we may assume that the origin $O\in\p\S_i$.
Testing \eqref{formu-1st-variation} with the position vector field $X(x)=x$, we find: for $\theta\in(0,\frac\pi2]$,
\eq{
(1-\cos\theta)\abs{\S_i}
\overset{\eqref{ineq-T<=Sigma}}{\leq}&\abs{\S_i}-\cos\theta\abs{T_i}
=\int_{\S_i}{\rm div}_{\S_i}X\rd\mcH^1-\cos\theta\int_{T_i}{\rm div}_{\p\mfR^{n+1}_+}X\rd\mcH^1\\
=&\int_{\S_i}\left<x,H_{\S_i}(x)\right>\rd\mcH^1
\leq \norm{H_{\S_i}}_{L^1(\S_i)}\abs{\S_i},
}
where we have used the fact that $O\in\p\S_i$, and hence $\abs{x}\leq \mcH^1(\S_i)=\abs{\S_i}$ for any $x\in\S_i$.
The case that $\theta\in(\frac\pi2,\pi)$ follows similarly, because $\abs{\S_i}-\cos\theta\abs{T_i}\geq\abs{\S_i}$.
It is then easy to see that
\eq{\label{ineq-P-Omega_i-1}
\min\{1-\cos\theta,1\}
\leq\norm{H_{\S_i}}_{L^1(\S_i)}
\leq\norm{H_{\S_i}-\lambda}_{L^1(\S_i)}+\lambda P(\Om_i),
}
which, in conjunction with {\bf(i)} and the fact that $P(\Om)\leq C_0$, yields
\eq{
\# J
\leq\frac{\norm{H_\S-\lambda}_{L^1(\S)}+\lambda P(\Om)}{\min\{1-\cos\theta,1\}}
\leq C(1+\norm{H_\S-\lambda}_{L^1(\S)}).
}
The upper bound on diameters of $\Om_i$ follows from \eqref{ineq-Topping-capillary} and the fact that $\abs{\S_i}
<P(\Om)\leq C_0$.

{\bf Case 2.} $n\geq2$.

This case can be handled similarly as \cite{JN23}*{(2-6), (2-7)}, once we have applied \eqref{ineq-Michael-Simon} on each connected component $\Om_i$ with $f\equiv1$ on $\S_i$ and H\"older's inequality to find
\eq{\label{ineq-P-Omega_i-n}
\sigma(n,\theta)^{-1}
\leq\norm{H_{\S_i}}_{L^n(\S_i)}
\leq\norm{H_{\S_i}-\lambda}_{L^n(\S_i)}+\lambda P(\Om_i)^\frac1n.
}
\end{proof}

\begin{remark}\label{Rem-apriori}
\normalfont
\begin{enumerate}
\item In Proposition \ref{Prop-JN23-Lem2.4}, the condition that $P(\Om)$ is bounded from above is equivalent to requiring a similar upper bound on $\abs{\S}$, since on one hand $\abs{\S}<P(\Om)$, on the other hand we have $P(\Om)\leq 2\abs{\S}$, thanks to \eqref{ineq-T<=Sigma};
\item We have the apriori estimate
\eq{\label{ineq-diam-d_ext}
{\rm diam}(\Om_i)
\leq\frac32d_{\rm ext}(\S_i).
}
To see this,
we define
\eq{\label{defn-d_m}
d_m
\coloneqq\max_{x\in\overline{T}}{\rm dist}(x,\S).
}
From the definitions of $d_m$ and $d_{\rm ext}$, we clearly have
\eq{
2d_m
\leq d_{\rm ext}(\p\S_i)
\leq d_{\rm ext}(\S_i)
}
for each connected component $\Om_i$.
On the other hand, we easily infer from the triangle inequality that ${\rm diam}(\Om_i)=\max_{x,y\in\overline{\Om}_i}\abs{x-y}\leq d_{\rm ext}(\S_i)+d_m$, \eqref{ineq-diam-d_ext} then follows.

\end{enumerate}

\end{remark}
\subsection{Density-type estimate}
We end this section with the following density-type estimate, which generalizes \cite{JN23}*{Lemma 3.2} from closed hypersurfaces to the capillary setting.
\begin{proposition}\label{Prop-density}
Given $n\in\mbN^+$, $\theta\in(0,\pi)$,
let $\S\subset\overline{\mbR^{n+1}_+}$ be a compact $\theta$-capillary hypersurface.
Let $\Om$ denote the enclosed region of $\S$ with $\p\mbR^{n+1}_+$. 

For $n\in\mbN$, there exists a positive constant $\de_{n,\theta}=\de_{n,\theta}(n,\theta)$ such that for any $\lambda\in\mfR_+$, if $\norm{H_\S-\lambda}_{L^n(\S)}\leq\de_{n,\theta}$, then
\eq{
\de_{n,\theta}
\leq\frac{\mcH^n(\S\cap \mcW_{r}(x))}{r^n}
}
for every $x\in\S$ and $0<r\leq\frac{\de_{n,\theta}}\lambda$.

\end{proposition}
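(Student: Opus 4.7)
My plan is to split into cases by dimension and, in each, to reduce the problem to a differential inequality for $V(r) \coloneqq \mcH^n(\S \cap \mcW_r(x))$. For $n \geq 2$, I would run the Michael-Simon / monotonicity argument adapted to anisotropic Wulff balls: test Theorem~\ref{Thm-Michal-Simon} against the Lipschitz cut-off $\phi_\varepsilon(y) = \max\{0, \min\{1, (r - F^o(y-x))/\varepsilon\}\}$, then let $\varepsilon \to 0^+$ and invoke the coarea formula (together with the uniform bound $\abs{\na F^o} \le M_{F^o}$) to obtain, for almost every $r>0$,
\begin{equation*}
V(r)^{(n-1)/n} \leq C_1(n,\theta)\,V'(r) + C_2(n,\theta) \int_{\S\cap\mcW_r(x)} \abs{H_\S} \rd\mcH^n.
\end{equation*}
Estimating the curvature term by H\"older and $\abs{H_\S}\le\abs{H_\S-\lambda}+\lambda$ gives $\int_{\S\cap\mcW_r(x)} \abs{H_\S} \rd\mcH^n \leq V(r)^{(n-1)/n}(\delta_{n,\theta} + \lambda V(r)^{1/n})$, which, after setting $U(r)=V(r)^{1/n}$ and dividing through, becomes
\begin{equation*}
1 - C_2\delta_{n,\theta} - C_2\lambda U(r) \leq C_1 n\, U'(r).
\end{equation*}

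I would then close the $n\ge 2$ case by a dichotomy. After fixing $\delta_{n,\theta}$ small enough that $C_2\delta_{n,\theta}\le 1/4$, set $r_* \coloneqq \sup\{r>0 : C_2\lambda U(r)\le 1/4\}$. On $(0, r_*]$ the left-hand bracket is at least $1/2$, so integrating from $U(0^+)=0$ yields $U(r)\ge r/(2C_1 n)$, i.e., $V(r)/r^n\ge (2C_1 n)^{-n}$. For $r\in(r_*, \delta_{n,\theta}/\lambda]$, monotonicity of $V$ together with $\lambda r\le\delta_{n,\theta}$ gives $V(r)\ge V(r_*)\ge (4C_2\lambda)^{-n}\ge r^n(4C_2\delta_{n,\theta})^{-n}$. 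A final choice of $\delta_{n,\theta}$ small enough that both $(2C_1 n)^{-n}$ and $(4C_2\delta_{n,\theta})^{-n}$ exceed $\delta_{n,\theta}$ produces the claimed density bound.

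For $n=1$, Theorem~\ref{Thm-Michal-Simon} is unavailable and I would switch to a direct geometric argument based on the a priori estimates of Proposition~\ref{Prop-JN23-Lem2.4}: under $P(\Om)\le C_0$ and $\abs{\Om}\ge C_0^{-1}$, $\lambda$ is pinched between two positive constants depending on $n,d_m,\theta,C_0$, and the chain \eqref{ineq-P-Omega_i-1} yields a lower bound $\abs{\S_i}\ge c(\theta)/\lambda$ on each connected component. For $x\in\S_i$, parametrize $\S_i$ by arclength $\gamma:[0,L]\to\overline{\mfR^2_+}$ with $\gamma(s_0)=x$; since $\max(s_0, L-s_0)\ge L/2\ge c/(2\lambda)$ and the arclength inequality $\abs{\gamma(s)-x}\le\abs{s-s_0}$ confines the longer sub-arc of length $r/M_{F^o}$ inside $B_{r/M_{F^o}}(x)\subset\mcW_r(x)$, one obtains $\mcH^1(\S\cap\mcW_r(x))\ge r/M_{F^o}$ whenever $r\le\delta_{1,\theta}/\lambda$ with $\delta_{1,\theta}/M_{F^o}\le c/2$. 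A final shrinkage of $\delta_{1,\theta}$ below $1/M_{F^o}$ delivers the density bound.

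The main obstacle I anticipate is the $n=1$ case: the $L^1$ deficit contains essentially no pointwise curvature information, so the Michael-Simon approach breaks down and one is forced to rely on external a priori inputs from Proposition~\ref{Prop-JN23-Lem2.4} together with the $C^2$-regularity of $\S$ through arclength parametrization. For $n\ge 2$ the steps are standard, with the only delicate point being the final tuning of $\delta_{n,\theta}$ so that its three simultaneous roles, namely deficit smallness, density lower bound, and the upper cut-off $\delta_{n,\theta}/\lambda$ on the range of $r$, become compatible; the dichotomy above is designed precisely to reconcile them.
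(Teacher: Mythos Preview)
Your proposal is correct and follows essentially the same route as the paper: for $n\geq2$ you test the Michael--Simon inequality (Theorem~\ref{Thm-Michal-Simon}) against a Lipschitz cut-off built from $F^o$ to obtain the same differential inequality for $V(r)$, and then close it by a direct dichotomy rather than the paper's equivalent contradiction argument; for $n=1$ you rely on the a priori estimates of Proposition~\ref{Prop-JN23-Lem2.4} together with an arclength comparison, the paper doing the same but invoking the extrinsic diameter bound $d_{\rm ext}(\S_i)\geq 2d_m$ (Remark~\ref{Rem-apriori}(2)) in place of your length bound $\abs{\S_i}\geq c(\theta)/\lambda$ from \eqref{ineq-P-Omega_i-1}. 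Both variants land on the same estimate $\mcH^1(\S\cap\mcW_r(x))\geq r/M_{F^o}$, so the differences are purely tactical.
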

\begin{proof}
For every $x\in\S$, we define
\eq{
V(x,r)
\coloneqq\mcH^n(\S\cap \mcW_{r}(x))
=\mcH^n\llcorner\S(\mcW_{r}(x)),
}
for simplicity, we omit the argument $x$ and denote $V(x,r)$ by $V(r)$.

Since $\S$ is compact, we see that the function $V$ is bounded and $\mcH^n\llcorner\S$ is a Radon measure.
Therefore we easily see that $V(r)$ is non-decreasing on $[0,\infty)$, the derivative $V'(r)$ is well-defined for almost every $r\in[0,\infty)$, and
\eq{
\int_{r_1}^{r_2}V'(\rho)\rd\rho
\leq V(r_2)-V(r_1)\text{ for any }0\leq r_1<r_2.
}
Moreover,
we have that $\mcH^n(\S\cap\p\mcW_{r}(x))=0$ for almost every $r\in(0,\infty))$ due to \cite{Mag12}*{Proposition 2.16}.
Fix any such $r$ and for any $h\in\mbR_+$ with $\mcH^n(\p\mcW_{r+h}(x)\cap\S)=0$, we define a Lipschitz cut-off function $f_h:\mbR^{n+1}\ra\mbR$ by setting
\eq{
f_h(y)
=
\begin{cases}
1,\quad&y\in \mcW_{r}(x),\\
1-\frac{1}{h}(F^o(y-x)-r),\quad&y\in \mcW_{r+h}(x)\setminus \mcW_{r}(x),\\
0,\quad&y\notin \mcW_{r+h}(x).
\end{cases}
}
A direct computation shows that on $\mcW_{r+h}(x)\setminus \mcW_{r}(x)$, 
\eq{\label{ineq-na^S-f_h}
\abs{\na^\S f_h(y)}
\leq\frac1h\abs{\na F^o(y-x)}
\leq\frac1{m_Fh}F(\na F^o(y-x))
=\frac1{m_Fh}
=\frac1{h(1-\abs{\cos\theta})}.
}

{\bf Case 1. }$n\geq2$.

\noindent{\bf Step 1. We prove that there exists $\sigma(n,\theta)>0$ such that for a.e. $0<r<\frac{\de}{\lambda}$,
\eq{\label{ineq-V(r)-V'(r)}
V(r)^\frac{n-1}{n}
\leq \tilde\sigma(n,\theta)\left(V'(r)+\norm{H_\S-\lambda}_{L^n(\S)}V(r)^\frac{n-1}{n}+\lambda V(r)\right).
}
}

Using a standard smooth approximation argument of the Lipschitz function $f_h$, we may exploit the Michael-Simon-type inequality \eqref{ineq-Michael-Simon} if $n\geq2$, with $f_h$ and \eqref{ineq-na^S-f_h} to obtain
\eq{
V(r)^\frac{n-1}{n}
\leq\sigma(n,\theta)
\left(\frac{V(r+h)-V(r)}{h(1-\abs{\cos\theta})}+\norm{f_hH}_{L^1(\S)}\right).
}
Define $\tilde\sigma(n,\theta)=\frac{\sigma(n,\theta)}{1-\abs{\cos\theta}}$,
we may choose a sequence $(h_k)_k$ such that $h_k\ra0^+$ and $\mcH^n(\S\cap\p \mcW_{r+h_k}(x))=0$ for each $h_k$, thus by letting $k\ra\infty$ in the above inequality, we arrive at
\eq{
V(r)^\frac{n-1}{n}
\leq&\tilde\sigma(n,\theta)(V'(r)+\norm{H}_{L^1(\S\cap \mcW_r(x))})\\
\leq&\tilde\sigma(n,\theta)(V'(r)+\norm{H-\lambda}_{L^1(\S\cap \mcW_r(x))}+\lambda V(r)),
}
\eqref{ineq-V(r)-V'(r)} then follows from H\"older's inequality, this completes the first step.

\noindent{\bf Step 2. We finish the proof of this case, i.e., we prove that for a.e. $0<r<\frac{\de_{n,\theta}}\lambda$, there holds
\eq{
V(r)\geq\de_{n,\theta} r^n.
}
}

If this is false, namely, if $V(r)<\de_{n,\theta} r^n$ for some fixed $0<r<\frac{\de_{n,\theta}}\lambda$, then we trivially have
\eq{
\lambda V(\rho)^\frac1n
\leq \lambda V(r)^\frac1n
\leq\de_{n,\theta}^{\frac{n+1}n}
}
for every $0<\rho<r$.

Rearranging \eqref{ineq-V(r)-V'(r)}, we deduce for a.e. $0<\rho<r$
\eq{
\left(\frac{\tilde\sigma^{-1}(n,\theta)-\norm{H_\S-\lambda}_{L^n(\S)}}{V(\rho)^\frac1n}-\lambda\right)V(\rho)
\leq V'(\rho),
}
if we choose $\de_{n,\theta}<\frac1{2\tilde\sigma(n,\theta)}$, then we find
\eq{
\left(\frac1{2\tilde\sigma(n,\theta)}-\de_{n,\theta}^{\frac{n+1}n}\right)V(\rho)^{1-\frac1n}
\leq\frac{1}{2\tilde\sigma(n,\theta)}V(\rho)^{1-\frac1n}-\lambda V(\rho)
\leq V'(\rho).
}
After further restricting $\de_{n,\theta}<\min\left\{1,\frac1{4\tilde\sigma(n,\theta)},\left(\frac1{4n\tilde\sigma(n,\theta)}\right)^n\right\}$, the above inequality gives
\eq{
\frac1{4\tilde\sigma(n,\theta)}
\leq \frac{V'(\rho)}{V(\rho)^{1-\frac1n}}.
}
Integrating this over $(0,r)$, we obtain
\eq{
\left(\frac1{4n\tilde\sigma(n,\theta)}\right)^nr^n
\leq V(r),
}
a contradiction to $V(r)<\de_{n,\theta} r^n$.

{\bf Case 2.} $n=1$.

The proof is essentially the same as {\bf Case 1}, expect that we use \eqref{ineq-Michael-Simon-n=1} in this case, and obtain
\eq{
V(r)
\leq\sigma(n=1,\theta)V(r+h)\left(\frac{V(r+h)-V(r)}{h(1-\abs{\cos\theta})}+\norm{f_hH}_{L^1(\S)}\right).
}
Arguing as {\bf Case 1}, and
note that $V(r)$ and $V(r+h_k)$ cancels if we let $h_k\ra0^+$.
In particular, we arrive at
\eq{
1
\leq\tilde\sigma(n=1,\theta)\left(V'(r)+\norm{H-\lambda}_{L^1(\S)}+\lambda V(r)\right).
}
The rest of the proof is just the same.
\end{proof}

\section{Quantitative Alexandrov theorem}\label{Sec-5}

\subsection{Shifted distance function}

In \cite{XZ23} we introduce the following essential tool, called the shifted distance function, which is found useful and shown to be the ``correct'' distance function that one should study when dealing with capillary problem in the half-space.

Given $\theta\in(0,\pi)$, and a (possibly not connected) bounded open set $\Om\subset\overline{\mfR^{n+1}_+}$ which is adhering to $\p\mfR^{n+1}_+$, whose relative boundary $\S=\overline{\p\Om\cap\mfR^{n+1}_+}$ is a compact $C^2$-hypersurface,
let $u:\mfR^{n+1}\to \mfR$ be the \textit{distance function with respect to $\S$}, defined as
\eq{
    u(y)=\sup_{r\geq0}\{r:B_r(y)\cap\S
    =\varnothing\}.
}
and $u_{\theta}:\mfR^{n+1}\to \mfR$ be the \textit{shifted distance function with respect to $\S$ and $\theta$}, defined as
\eq{\label{defn-weighteddistance}
    u_{\theta}(y)
    =\sup_{r\geq0}\{r:\mcW_r(y)\cap \S
    =\varnothing\},
}
which is a Lipschitz function on $\mfR^{n+1}$ with Lipschitz constant at most $\frac{1}{1-\abs{\cos\theta}}$, see \cite{XZ23}*{Lemma 3.4}.

One also sees from definition that
\eq{\label{eq-dist-theta}
    u_\theta(y)
    =u(y-u_\theta(y)\cos\theta E_{n+1})
}
and for any $0<r<u_{\theta}(y)$, there holds
\eq{\label{ineq-weighted-distance}
   u(y-r\cos\theta E_{n+1})>r.
}
See Fig. \ref{figure2}.	\begin{figure}[H]
	\centering
	\includegraphics[height=7cm,width=12cm]{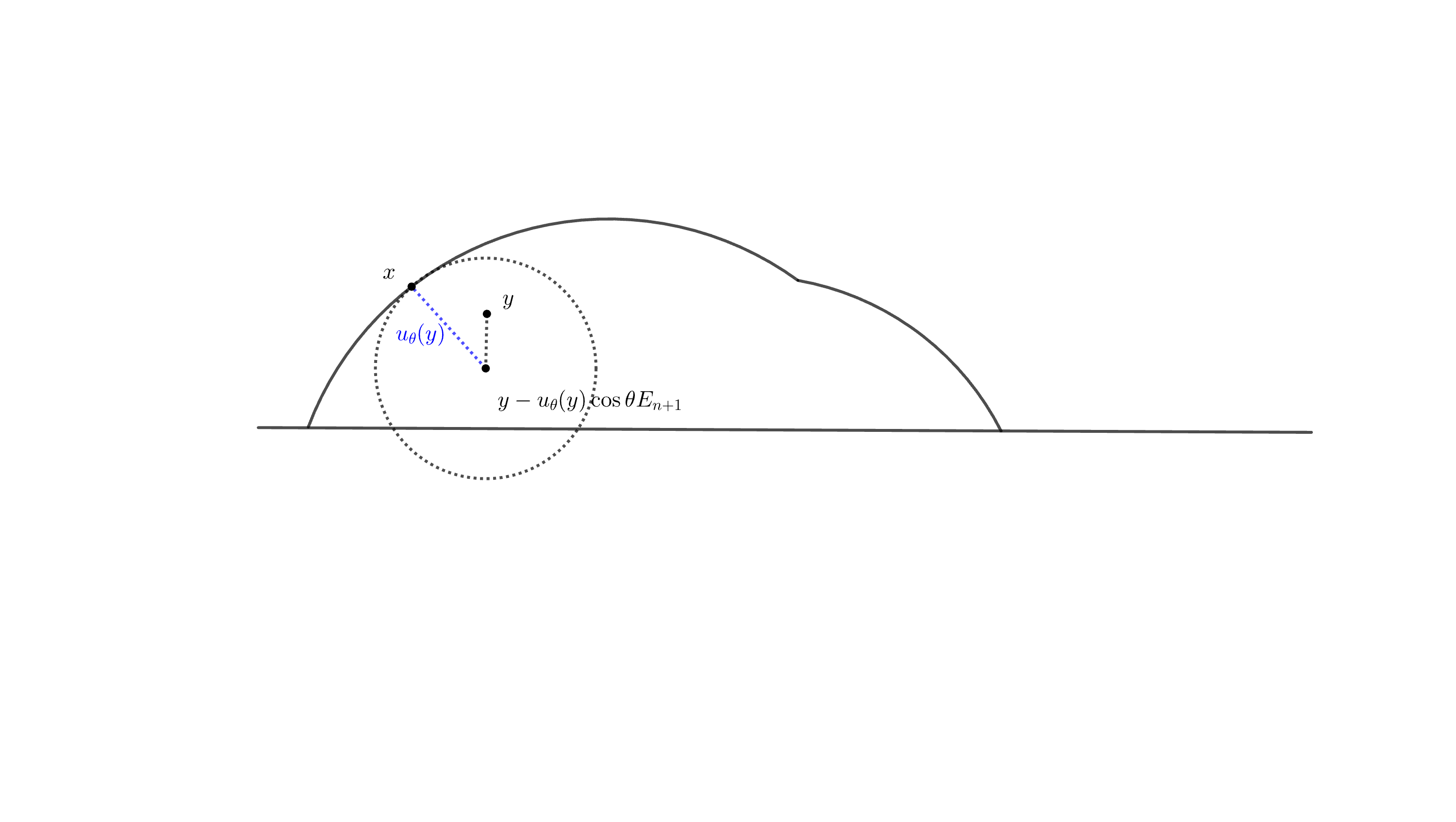}
	\caption{shifted distance function}
	\label{figure2}
\end{figure}
In view of Section 
\ref{Sec-2-1-1}, the shifted distance function is a natural device to be studied, since it can be equivalently characterized as
\eq{
u_\theta(y)
=\min_{z\in\S}F^o(z-y),
}
and amounts to be the anisotropic counterpart of the Euclidean distance function $u$.

For $s>0$, we define the super level-set and level-set of $u_{\theta}$ in $\overline\Omega$ by
\eq{\label{defn-Omegas}
    \Omega_s\coloneqq\{y\in\overline\Omega:u_{\theta}(y)>s\},
    \quad
    \pr\Om_s\coloneqq\{y\in\overline\Omega:u_{\theta}(y)=s\}.
}
We define for every $s\geq0$
\eq{
\S_s
\coloneqq\left\{x\in\S:\text{ there exists }y\in\pr\Om_s\text{ such that }u_\theta(y)=s\text{ attains at }x\right\};
}
in other words, for any $x\in\S_s$, there exists $y\in\pr\Om_s$ such that $x\in \mcW_s(y)\cap\S$.
Clearly, $\S_0=\S$, and for any $0\leq s_1<s_2$, we have the inclusion $\S_{s_2}\subset\S_{s_1}$.

\subsection{Area and volume estimates in terms of small deficit}
\begin{proposition}\label{Prop-JN23-Prop3.3}
Given $n\in\mbN^+$, $\theta\in(0,\pi)$,
let $\S\subset\overline{\mbR^{n+1}_+}$ be a compact $\theta$-capillary hypersurface.
Let $\Om$ denote the enclosed region of $\S$ with $\p\mbR^{n+1}_+$. 

Given $\lambda\in\mbR_+$ and $1\leq C_0<\infty$, if $P(\Om)\leq C_0$ and $\abs{\Om}\geq C_0^{-1}$, then for any $0<r<R=\frac{n}\lambda$, there exist $\de=\de(n,\theta,C_0)>0$, $C=C(n,\theta,C_0)$, such that if 
\eq{
\norm{H_\S-\lambda}_{L^n(\S)}\leq\de,
}
then there hold
\eq{\label{ineq-Om_r}
\Abs{\abs{\Om_r}-\frac{\abs{\Om}}{R^{n+1}}(R-r)^{n+1}}
\leq C(n,\theta,C_0)\norm{H_\S-\lambda}_{L^n(\S)}.
}
and
\eq{\label{ineq-Sigmma-Sigma_r}
\int_{\S\setminus\S_r}F(\nu)\rd\mcH^n
\leq\frac{C(n,\theta,C_0)}{(R-r)^{n+1}}\norm{H_\S-\lambda}_{L^n(\S)}.
}
Moreover, for $0<\rho<r$, there holds
\eq{\label{ineq-Om_r-rho}
\Abs{\Abs{(\Om_r+\mcW_{\rho})\cap\overline{\mfR^{n+1}_+}}-\frac{\abs{\Om}}{R^{n+1}}(R-(r-\rho))^{n+1}}
\leq\frac{C(n,\theta,C_0)}{(R-r)^{n+1}}\norm{H_\S-\lambda}_{L^n(\S)}.
}
\end{proposition}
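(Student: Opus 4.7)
I would adapt the quantitative Montiel-Ros strategy of Julin-Niinikoski~\cite{JN23} to the capillary setting, using the anisotropic formalism of Section~\ref{Sec-2-1-1}. The idea is to show that every inequality in the chain \eqref{ineq-hk-aniso-argument} is a near-equality with deficit $O(\norm{H_\S-\lambda}_{L^n(\S)})$; the volume and perimeter estimates then follow by localizing this chain at each level $s\in(0,R)$. For the key near-equality of Heintze-Karcher at $s=\infty$: combining the Minkowski identity from the introduction with $(n+1)\abs{\Om}=\int_\S\<x,\nu\>\rd\mcH^n$ (the $T$-contribution in the divergence theorem vanishes because $\<x,E_{n+1}\>=0$ on $T$) yields
\eq{
n\int_\S F(\nu)\rd\mcH^n=(n+1)\lambda\abs{\Om}+\int_\S\<x,\nu\>(H_\S-\lambda)\rd\mcH^n.
}
Theorem~\ref{Thm-Topping-ineq} and H\"older bound the remainder by $C\norm{H_\S-\lambda}_{L^n(\S)}$. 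Plugging into \eqref{ineq-HK-aniso} and using $1/H_\S-1/\lambda=(\lambda-H_\S)/(\lambda H_\S)$ together with the lower bound on $\lambda$ from Proposition~\ref{Prop-JN23-Lem2.4}(i), one obtains
\eq{
0\leq\frac{n}{n+1}\int_\S\frac{F(\nu)}{H_\S}\rd\mcH^n-\abs{\Om}\leq C\norm{H_\S-\lambda}_{L^n(\S)}.
}

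For each $s\in(0,R)$, apply the area-formula/AM-GM chain of \eqref{ineq-hk-aniso-argument} separately to $Z\cap\{t\leq s\}$ and $Z\cap\{t\geq s\}$. A Lagrange-multiplier argument at the Wulff tangency point shows that every $y$ with $u_\theta(y)=\tau$ satisfies $y=\zeta_\theta(\tilde x,\tau)$ for its closest point $\tilde x\in\S$; this yields $\Om\setminus\Om_s\subset\zeta_\theta(Z\cap\{t\leq s\})$, $\Om_s\subset\zeta_\theta(Z\cap\{t\geq s\})$, and hence $\abs{\Om\setminus\Om_s}\leq U_1(s)$, $\abs{\Om_s}\leq U_2(s)$ with $U_1(s)+U_2(s)=\frac{n}{n+1}\int_\S F(\nu)/H_\S\rd\mcH^n$. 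Since $\abs{\Om\setminus\Om_s}+\abs{\Om_s}=\abs{\Om}$ and both gaps are non-negative, the previous near-equality forces each gap $U_i(s)-(\cdot)$ to be at most $C\norm{H_\S-\lambda}_{L^n(\S)}$. A Taylor expansion of $U_2(s)$ about $H_\S=\lambda$, combined with the replacement $\int_\S F(\nu)\rd\mcH^n=(n+1)\abs{\Om}/R+O(\norm{H_\S-\lambda}_{L^n(\S)})$ from the previous step, identifies $U_2(s)$ with $\abs{\Om}(R-s)^{n+1}/R^{n+1}$ up to the same order of error, proving \eqref{ineq-Om_r}.

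For \eqref{ineq-Sigmma-Sigma_r}, the monotonicity $\S_{s_2}\subset\S_{s_1}$ for $s_1<s_2$ (verified by sliding $y$ along $\nu_F(\tilde x)$ from its closest point) ensures that $\Om_r$ is already covered by $\zeta_\theta|_{\S_r\times[r,\cdot]}$, so contributions from $(\S\setminus\S_r)\times[r,\cdot]$ to $U_2(r)$ are pure multiplicity overlap:
\eq{
\int_{\S\setminus\S_r}F(\nu)\int_r^{n/H_\S}\bigl(1-\tfrac{tH_\S}{n}\bigr)^n\rd t\rd\mcH^n\leq U_2(r)-\abs{\Om_r}\leq C\norm{H_\S-\lambda}_{L^n(\S)},
}
and the inner integral exceeds $c(R-r)^{n+1}/R^{n+1}$, yielding \eqref{ineq-Sigmma-Sigma_r}. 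For \eqref{ineq-Om_r-rho}, the (non-symmetric) triangle inequality $u_\theta(y+w)\geq u_\theta(y)-F^o(w)$ yields $(\Om_r+\mcW_\rho)\cap\overline{\mfR^{n+1}_+}\subset\{u_\theta>r-\rho\}\cap\overline{\mfR^{n+1}_+}$; the defect in the converse inclusion is covered by shifted fibers from $\S\setminus\S_{r-\rho}$, whose total volume is controlled via \eqref{ineq-Sigmma-Sigma_r}, and combining with \eqref{ineq-Om_r} at level $r-\rho$ produces \eqref{ineq-Om_r-rho}.

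The hardest step is the perimeter estimate \eqref{ineq-Sigmma-Sigma_r}: ensuring that the $U_2$-excess localizes to $\S\setminus\S_r$ requires a careful multiplicity analysis of $\zeta_\theta$, complicated in the capillary case by the non-symmetry of $F^o$ (so $F^o(-w)\neq F^o(w)$ and the triangle inequality picks up directional constants) and by the need to distinguish $\{u_\theta>s\}\cap\mfR^{n+1}_+$ from $\Om_s$ at the supporting hyperplane $\p\mfR^{n+1}_+$, both features absent from the closed-hypersurface analysis of~\cite{JN23}.
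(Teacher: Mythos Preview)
Your overall strategy is the paper's, but there is one genuine gap: you tacitly assume $H_\S>0$ (and in places $H_\S\approx\lambda$) \emph{pointwise}, whereas the hypothesis $\norm{H_\S-\lambda}_{L^n(\S)}\leq\de$ allows $H_\S$ to be arbitrarily negative or arbitrarily large on a set of small $\mcH^n$-measure. This breaks several steps. The set $Z$ in \eqref{ineq-hk-aniso-argument} has fibers of infinite length where $\kappa_n(x)\leq0$, so $\int_Z{\rm J}\zeta_F$ need not be finite and the inclusion $\Om_s\subset\zeta_\theta(Z\cap\{t\geq s\})$ is not even well posed. The manipulation $1/H_\S-1/\lambda=(\lambda-H_\S)/(\lambda H_\S)$ cannot be converted to an $L^n$-bound without a pointwise lower bound on $H_\S$. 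And the claim ``the inner integral exceeds $c(R-r)^{n+1}/R^{n+1}$'' in your derivation of \eqref{ineq-Sigmma-Sigma_r} fails at points where $H_\S\gg\lambda$, since there $\int_r^{n/H_\S}(1-tH_\S/n)^n\rd t$ is essentially zero (or the limits are reversed).

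The paper handles this by a good/bad decomposition $\S=\S_G\cup\S_B$ with $\S_G=\{\abs{H_\S-\lambda}<\tfrac\lambda2\}$: on $\S_G$ one has $\tfrac\lambda2\leq H_\S\leq\tfrac{3\lambda}2$ and all your estimates become rigorous, while $\mcH^n(\S_B)\leq(2/\lambda)\norm{H_\S-\lambda}_{L^1(\S)}\leq C\norm{H_\S-\lambda}_{L^n(\S)}$ by Chebyshev. Over $\S_B$ the fibers are artificially truncated at the a priori height $\tilde R=\sup_\Om u_\theta<\infty$ (finite thanks to Proposition~\ref{Prop-JN23-Lem2.4}(ii)), and one verifies directly that $\abs{\zeta_F(Z_B)}\leq C\norm{H_\S-\lambda}_{L^n(\S)}$. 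This extra term must then be carried through every inclusion---e.g.\ $\Om_s\setminus\Om_t\subset\zeta_F(Z_G\cap(\S_s\times[s,t]))\cup\zeta_F(Z_B)$ rather than simply $\zeta_F(Z\cap\{s\leq\cdot\leq t\})$. Once this is in place, your localization into $\{t\leq s\}$ and $\{t\geq s\}$ and the tracking of the four error terms is exactly what the paper does in its Steps~4--7.
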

\begin{proof}
For simplicity, we assume that the principal curvatures of $\S$ at $x$, say $\{\kappa_i(x)\}_{i=1,\ldots,n}$, are indexed in the increasing order.
Define the set of ``good'' points of $\S$ as
\eq{
\S_G\coloneqq\{x\in\S:\abs{H_\S(x)-\lambda}<\frac12\lambda\},
}
and correspondingly the set of points of $\S$ at which we only expect "bad" behavior
\eq{
\S_B\coloneqq\S\setminus\S_G.
}

On one hand, for some $\de<1$ to be specified later, we exploit Proposition \ref{Prop-JN23-Lem2.4} to see that $\Om$ can be decomposed to $\#J\leq2\bar C$ connected components, each of which has diameter upper bound $3\bar C$ by virtue of Remark \ref{Rem-apriori} (2).
Therefore, we may prove the proposition componentwise and assume that $\Om$ is connected.
On the other hand, we also notice that a simple application of the triangle inequality yields: for any $y\in\overline\Om$,
\eq{
u_\theta(y)
\leq\abs{\cos\theta}u_\theta(y)+\abs{y-x},
}
where $x\in\S$ is the point at which the shifted distance $u_\theta(y)$ is attained,
and hence
\eq{
u_\theta(y)
\leq\frac{3\bar C}{1-\abs{\cos\theta}}\eqqcolon\tilde R(n,\theta,C_0)=\tilde R.
}

\noindent{\bf Step 1. Quantify the Heintze-Karcher-type inequality in the spirit of \cite{JWXZ23}.
}

In view of the introduction and \cite{JN23}, we define
\eq{
Z_G
\coloneqq\left\{(x,t)\in\S_G\times[0,\infty):0\leq t\leq\frac{1}{\kappa_n(x)}\right\},
}
which is clearly well-defined since $\kappa_n(x)\geq\frac{H_\S(x)}n\geq\frac\lambda{2n}>0$.
For $\S_B$, we first further decompose it to be
\eq{
\S'_B
\coloneqq\{x\in\S_B:\kappa_n(x)\leq\frac{1}{\tilde R}\},\text{ and }
\S''_B
\coloneqq\{x\in\S_B:\kappa_n(x)>\frac1{\tilde R}\},
}
then set
\eq{
Z'_B
\coloneqq&\S_B'\times[0,\tilde R],\\
Z''_B
\coloneqq&\left\{(x,t)\in\S_B''\times[0,\infty):0\leq t\leq\frac1{\kappa_n(x)}\right\},\\
Z_B
\coloneqq& Z'_B\cup Z''_B.
}
Clearly, $Z_G,Z_B$ are disjoint and bounded, and we claim that
\eq{\label{ineq-Om-zeta_theta}
\Om\subset\zeta_F(Z_G\cup Z_B).
}
In fact, for any $y\in\Om$ such that $r\coloneqq u_\theta(y)$ and for any $x\in\S$ at which $u_\theta(y)$ is attained, we may first infer from \cite{JWXZ23}*{Proof of Theorem 1.2, Case 2} that $x$ cannot be on $\p\S$, then from the definition of $u_\theta(y)$ that $y=x-r\nu_F(x)$, and finally from \cite{JWXZ23}*{Proof of Theorem 1.2, Case 1} that $\kappa_n(x)\leq\frac1r$.
The claim is thus validated by the fact that $r=u_\theta(y)\leq\tilde R$.

Next, we conduct a computation similar to that presented in the introduction to find
\eq{
\abs{\Om}
\leq&\abs{\zeta_F(Z_G)}+\abs{\zeta_F(Z_B)}
\leq\int_{\zeta_F(Z_G)}\mcH^0(\zeta_F^{-1}(y))\cap Z_G)\rd y+\abs{\zeta_F(Z_B)}\\
=&\int_{Z_G}{\rm J}^{Z_G}\zeta_F\rd\mcH^{n+1}+\abs{\zeta_F(Z_B)}\\
=&\int_{\S_G}\int_0^\frac{1}{\kappa_n(x)}F(\nu)\prod_{i=1}^n(1-t\kappa_i(x))\rd t\rd\mcH^n(x)+\abs{\zeta_F(Z_B)}\\
\leq&\int_{\S_G}F(\nu)\int_0^\frac{1}{\kappa_n(x)}\left(1-t\frac{H_\S(x)}{n}\right)^n\rd t\rd\mcH^n+\abs{\zeta_F(Z_B)}\\
\leq&\int_{\S_G}F(\nu)\int_0^\frac{n}{H_\S(x)}\left(1-t\frac{H_\S(x)}{n}\right)^n\rd t\rd\mcH^n+\abs{\zeta_F(Z_B)}\\
=&\frac{n+1}n\int_{\S_G}\frac{F(\nu)}{H_\S}\rd\mcH^n+\abs{\zeta_F(Z_B)}.
}
Let us keep track of the errors that arise each time we estimate with an inequality in the above argument. Precisely, we set
\eq{
N_1
\coloneqq&\abs{\zeta_F(Z_G)\setminus\Om},\\
N_2
\coloneqq&\int_{\zeta_F(Z_G)}\abs{\mcH^0(\zeta_F^{-1}(y)\cap Z_G)-1}\rd y,\\
N_3
\coloneqq&\int_{\S_G}F(\nu)\int_0^\frac{1}{\kappa_n(x)}\Abs{\left(1-t\frac{H_\S}n\right)^n-\prod_{i=1}^n(1-t\kappa_i(x))}\rd t\rd\mcH^n\\
N_4
\coloneqq&\int_{\S_G}F(\nu)\int_{\frac1{\kappa_n(x)}}^\frac{n}{H_\S(x)}\Abs{\left(1-t\frac{H_\S}n\right)^n}\rd t\rd\mcH^n.
}
The Heintze-Karcher-type inequality can then be quantified as
\eq{\label{ineq-hk-N_1-N_4}
\abs{\Om}
\leq\frac{n+1}n\int_{\S_G}\frac{F(\nu)}{H_\S}\rd\mcH^n+\abs{\zeta_F(Z_B)}-N_1-N_2-N_3-N_4.
}

\noindent{\bf Step 2. Quantify the Heintze-Karcher-type inequality using \eqref{eq-JN23-(2-3)}.
}

Using H\"older inequality, we find
\eq{
\norm{H_\S-\lambda}_{L^1(\S)}
\leq\abs{\S}^\frac{n-1}{n}\norm{H_\S-\lambda}_{L^n(\S)}
<C(n,C_0)\de.
}
By virtue of Proposition \ref{Prop-JN23-Lem2.4}, we could further decrease $\de$ to obtain
\eq{
0<\frac1{2\bar C}
\leq\lambda\leq2\bar C,
}
and hence we may estimate the area of the "bad" set by H\"older's inequality
\eq{\label{ineq-JN23-(3-3)}
\mcH^n(\S_B)
\leq\frac2\lambda\int_\S\abs{H_\S(x)-\lambda}\rd\mcH^n
\leq C(n,\theta,C_0)\norm{H_\S-\lambda}_{L^n(\S)},
}
while for the "good" set,
\eq{\label{ineq-hk-H_S-lambda}
&\frac{n}{n+1}\int_{\S_G}\frac{F(\nu)}{H_\S}\rd\mcH^n
=\frac{n}{n+1}\int_{\S_G}\frac{F(\nu)}{\lambda}+F(\nu)(\frac1{H_\S}-\frac1\lambda)\rd\mcH^n\\
\overset{\eqref{ineq-JXZ23-(20)}}{\leq}&\frac{nP_F(\Om)}{(n+1)\lambda}+\frac{n(1+\abs{\cos\theta})}{n+1}\frac{2}{\lambda^2}\int_\S\abs{H_\S-\lambda}\rd\mcH^n\\
\leq&\frac{nP_F(\Om)}{(n+1)\lambda}+C(n,\theta,C_0)\norm{H_\S-\lambda}_{L^n(\S)},
}
where we have used in the first inequality the fact that $F(\nu)>0$, and hence
\eq{
\int_{\S_G}F(\nu)\rd\mcH^n\leq\int_{\S}F(\nu)\rd\mcH^n
=P_F(\Om).
}

On the other hand, fix any $x\in\p\S$, by testing \eqref{eq-JN23-(2-3)} with $X(y)=y-x$, we get
\eq{
nP_F(\Om;\mfR^{n+1}_+)
=(n+1)\lambda\abs{\Om}+\int_\S(H_\S-\lambda)\left<y-x,\nu(y)\right>\rd\mcH^n,
}
it follows from Proposition \ref{Prop-JN23-Lem2.4} and our choice of $\de$ that
\eq{\label{ineq-JN23-(3-4)}
\Abs{\frac{nP_F(\Om;\mfR^{n+1}_+)}{(n+1)\lambda}-\abs{\Om}}
\leq C(n,\theta,C_0)\norm{H_\S-\lambda}_{L^n(\S)},
}
which, in conjunction with \eqref{ineq-hk-H_S-lambda}, implies
\eq{
\frac{n}{n+1}\int_{\S_G}\frac{F(\nu)}{H_\S}\rd\mcH^n-\abs{\Om}
\leq C(n,\theta,C_0)\norm{H_\S-\lambda}_{L^n(\S)}.
}
Substituting this back into \eqref{ineq-hk-N_1-N_4}, we obtain the estimate of the error terms:
\eq{\label{ineq-N_1+N_2+N_3+N_4}
N_1+N_2+N_3+N_4
\leq\abs{\zeta_F(Z_B)}+C(n,\theta,C_0)\norm{H_\S-\lambda}_{L^n(\S)}.
}

\noindent{\bf Step 3. We show that
\eq{\label{ineq-JN23-(3-10)}
\abs{\zeta_F(Z_B)}
\leq C(n,\theta,C_0)\norm{H_\S-\lambda}_{L^n(\S)}.
}
}

Note that a direct consequence of this step is, the error terms $N_1+\ldots N_4$ will be also controlled by $\norm{H_\S-\lambda}_{L^n(\S)}$ thanks to \eqref{ineq-N_1+N_2+N_3+N_4}, that is,
\eq{\label{ineq-JN23-(3-12)}
N_1+N_2+N_3+N_4
\leq C(n,\theta,C_0)\norm{H_\S-\lambda}_{L^n(\S)}.
}

To prove \eqref{ineq-JN23-(3-10)}, we use the definitions of $Z'_B, Z_B''$, and the area formula to find
\eq{\label{ineq-JN23-(3-11)}
\abs{\zeta_F(Z_B)}
\leq\int_{Z'_B\cup Z''_B}{\rm J}^{Z_B}\zeta_F\rd\mcH^{n+1}
=&\int_{\S_B'}F(\nu)\int_0^{\tilde R}\prod_{i=1}^n\abs{1-t\kappa_i(x)}\rd t\rd\mcH^n\\
&+\int_{\S_B''}F(\nu)\int_0^{\frac1{\kappa_n(x)}}\prod_{i=1}^n\abs{1-t\kappa_i(x)}\rd t\rd\mcH^n.
}
Note that on $Z'_B$, one has by definition that $\abs{1-t\kappa_i(x)}=(1-t\kappa_i(x))$ for any $(x,t)\in\S_B'\times[0,\tilde R]$, and hence from the AM-GM inequality, the Jensen's inequality, and the definition of $\tilde R$ that: for any $(x,t)\in Z_B'$,
\eq{
\prod_{i=1}^n\abs{1-t\kappa_i(x)}
\leq(1-t\frac{H_\S(x)}n)^n
\leq(1+\frac{\tilde R}n\abs{H_\S(x)})^n
\leq C(n,\theta,C_0)(1+\abs{H_\S(x)}^n),
}
while similarly on $Z''_B$, since by definition $t\leq\frac1{\kappa_n(x)}<\tilde R$, one has
\eq{
\prod_{i=1}^n\abs{1-t\kappa_i(x)}
\leq C(n,\theta,C_0)(1+\abs{H_\S(x)}^n).
}
Taking these facts into account, \eqref{ineq-JN23-(3-11)} thus reads
\eq{
\abs{\zeta_F(Z_B)}
\leq& C(n,\theta,C_0)\tilde R\int_{\S_B}F(\nu)(1+\abs{H_\S(x)}^n)\rd\mcH^n\\
\leq& C(n,\theta,C_0)\int_{\S_B}(1+\lambda^n+\abs{H_\S-\lambda}^n)\rd\mcH^n\\
\leq&C(n,\theta,C_0)\left(\mcH^n(\S_B)+\norm{H_\S-\lambda}_{L^n(\S)}^n\right)
\leq C(n,\theta,C_0)\norm{H_\S-\lambda}_{L^n(\S)},
}
where we have used trivially $M_F=1+\abs{\cos\theta}$ in the second inequality, \eqref{ineq-JN23-Lem2.4-(i)} for the third inequality, and \eqref{ineq-JN23-(3-3)} for the last one.

\noindent{\bf Step 4. We prove that for any $s\geq0$, and for any $0<r<R$, there holds
\eq{\label{ineq-JN23-(3-13)}
\Abs{
\Om\cap\zeta_F\left(Z_G\cap(\S_s\times(r,R))\right)}
\geq\frac{(R-r)^{n+1}}{(n+1)R^n}\int_{\S_s}F(\nu)\rd\mcH^n-C(n,\theta,C_0)\norm{H_\S-\lambda}_{L^n(\S)}.
}
}

To prove \eqref{ineq-JN23-(3-13)}, we follow to use the idea presented in \cite{JN23} of "backtracking" the Montiel-Ros argument.

Precisely, invoking the definitions of $N_1,\ldots, N_4$, we may estimate with \eqref{ineq-JN23-(3-12)} as follows:
\eq{\label{esti-Z_G-Sigma_s-1}
&\Abs{
\Om\cap\zeta_F\left(Z_G\cap(\S_s\times(r,R))\right)}
\geq\abs{\zeta_F(Z_G\cap(\S_s\times(r,R)))}-N_1\\
\geq&\int_{\zeta_F(Z_G\cap(\S_s\times(r,R)))}\mcH^0(\zeta_F^{-1}(y)\cap Z_G\cap(\S_s\times(r,R)))\rd y-N_1-N_2\\
=&\int_{\S_G\cap\S_s}F(\nu)\int_{\min\{r,\frac1{\kappa_n(x)}\}}^{\min\{R,\frac1{\kappa_n(x)}\}}\prod_{i=1}^n\left(1-t\kappa_i(x)\right)\rd t\rd\mcH^n-N_1-N_2\\
\geq&\int_{\S_G\cap\S_s}F(\nu)\int_{\min\{r,\frac1{\kappa_n(x)}\}}^{\min\{R,\frac1{\kappa_n(x)}\}}\left(1-t\frac{H_\S(x)}n\right)^n\rd t\rd\mcH^n-N_1-N_2-N_3\\
\geq&\int_{\S_G\cap\S_s}F(\nu)\int_{\min\{r,\frac1{\kappa_n(x)}\}}^{\min\{R,\frac{n}{H_\S(x)}\}}\left(1-t\frac{H_\S(x)}n\right)^n\rd t\rd\mcH^n-N_1-N_2-N_3-N_4\\
\geq&\int_{\S_G\cap\S_s}F(\nu)\int_{\min\{r,\frac{n}{H_\S(x)}\}}^{\min\{R,\frac{n}{H_\S(x)}\}}\left(1-t\frac{H_\S(x)}n\right)^n\rd t\rd\mcH^n-N_1-N_2-N_3-N_4,
}
where we have used the fact that $\frac1{\kappa_n(x)}\leq\frac{n}{H_\S(x)}$ on $\S_G$ to derive the last inequality.
Let us investigate further the integral arises in the last inequality, recall that on the "good" set $\S_G$, we have $0<\frac12\lambda\leq H_\S(x)\leq2\lambda$,
therefore we find
\eq{\label{esti-Z_G-Sigma_s-2}
&\int_{\S_G\cap\S_s}F(\nu)\int_{\min\{r,\frac{n}{H_\S(x)}\}}^{\min\{R,\frac{n}{H_\S(x)}\}}\left(1-t\frac{H_\S(x)}n\right)^n\rd t\rd\mcH^n\\
\geq&\int_{\S_G\cap\S_s}F(\nu)\int_{\min\{r,\frac{n}{H_\S(x)}\}}^{\min\{R,\frac{n}{H_\S(x)}\}}\left(1-t\frac\lambda n\right)^n\rd t\rd\mcH^n-C(n,\theta,C_0)\norm{H_\S-\lambda}_{L^n(\S)}\\
\geq&\int_{\S_G\cap\S_s}F(\nu)\int_{r}^{R}\left(1-t\frac\lambda n\right)^n\rd t\rd\mcH^n-C(n,\theta,C_0)\norm{H_\S-\lambda}_{L^n(\S)}\\
=&\frac{(R-r)^{n+1}}{(n+1)R^n}\int_{\S_G\cap\S_s}F(\nu)\rd\mcH^n-C(n,\theta,C_0)\norm{H_\S-\lambda}_{L^n(\S)},
}
to derive the second inequality, we have used first the fact that
\eq{
\int_{\min\{r,\frac{n}{H_\S(x)}\}}^{\min\{R,\frac{n}{H_\S(x)}\}}\left(1-t\frac\lambda n\right)^n\rd t
\geq\int_{r}^{\min\{R,\frac{n}{H_\S(x)}\}}\left(1-t\frac\lambda n\right)^n\rd t,
}
and then the observation: as $\frac{n}\lambda=R>\frac{n}{H_\S(x)}$, one has
\eq{
\int_{\frac{n}{H_\S(x)}}^R(1-t\frac\lambda{n})^n\rd t
=\frac{n}{(n+1)\lambda}(\frac{H_\S(x)-\lambda}{H_\S(x)})^{n+1}
<\frac{n}{n+1}(H_\S(x)-\lambda)^n(\frac1\lambda)^{n+1},
}
it follows from $\de<1$ that
\eq{
\int_{\S_G\cap\S_s\cap\{H_\S(x)>\lambda\}}F(\nu)\int_{\frac{n}{H_\S(x)}}^R(1-t\frac\lambda{n})^n\rd t
\leq& C(n,\theta,C_0)\int_\S\abs{H_\S-\lambda}^n\\
\leq&C(n,\theta,C_0)\norm{H_\S-\lambda}_{L^n(\S)}.
}

Substituting \eqref{esti-Z_G-Sigma_s-2} back into \eqref{esti-Z_G-Sigma_s-1}, and keeping in mind that the error terms are controlled \eqref{ineq-JN23-(3-12)}, we thus arrive at
\eq{
&\Abs{
\Om\cap\zeta_F\left(Z_G\cap(\S_s\times(r,R))\right)}\\
\geq&\frac{(R-r)^{n+1}}{(n+1)R^n}\int_{\S_G\cap\S_s}F(\nu)\rd\mcH^n-C(n,\theta,C_0)\norm{H_\S-\lambda}_{L^n(\S)}.
}
Finally, by virtue of \eqref{ineq-JN23-(3-3)}, we have
\eq{
&\int_{\S_s}F(\nu)\rd\mcH^n
=\int_{\S_G\cap\S_s}F(\nu)\rd\mcH^n+\int_{\S_B\cap\S_s}F(\nu)\rd\mcH^n\\
\leq&\int_{\S_G\cap\S_s}F(\nu)\rd\mcH^n+M_F\mcH^n(\S_B)
\leq\int_{\S_G\cap\S_s}F(\nu)\rd\mcH^n+C(n,\theta,C_0)\norm{H_\S-\lambda}_{L^n(\S)},
}
and hence
\eq{
&\frac{(R-r)^{n+1}}{(n+1)R^n}\int_{\S_G\cap\S_s}F(\nu)\rd\mcH^n\\
\geq&\frac{(R-r)^{n+1}}{(n+1)R^n}\int_{\S_s}F(\nu)\rd\mcH^n-\frac{n}{(n+1)\lambda}C(n,\theta,C_0)\norm{H_\S-\lambda}_{L^n(\S)},
}
from which we deduce \eqref{ineq-JN23-(3-13)}.

\noindent{\bf Step 5. We prove two inclusions: for any $0<\rho<r<R$,
\eq{\label{ineq-JN23-(3-15)}
&\Om\cap\zeta_F\left(Z_G\cap(\S_0\times(r,R))\right)\\
\subseteq&\Om_r\cup\left\{y\in\zeta_F(Z_G):\mcH^0(\zeta_F^{-1}(y)\cap Z_G)\geq2\right\}\cup\zeta_F(Z_B),
}
and
\eq{\label{ineq-JN23-(3-19)}
&\Om\cap\zeta_F(Z_G\cap(\S_{r}\times(r-\rho,R)))\\
\subseteq&(\Om_r+\mcW_{\rho})\cap\overline{\mfR^{n+1}_+}\cup\left\{y\in\zeta_F(Z_G):\mcH^0(\zeta_F^{-1}(y)\cap Z_G)\geq2\right\}\cup\zeta_F(Z_B).
}
}

In fact, for any $y\in\Om\cap\zeta_F\left(Z_G\cap(\S_0\times(r,R))\right)$, there exist $x_y\in\S_G$, $r<t_y<R$ with $t_y\leq\frac1{\kappa_n(x_y)}$, such that $y=\zeta_F(x_y,t_y)=x_y-t_y\nu_F(x_y)$.

If $u_\theta(y)$ attains at $x_y$, then since $y\in\Om$, we learn from \cite{JWXZ23}*{Proof of Theorem 1.2} that $x_y$ must lie in the interior of $\S$, the ball $\overline{\mcW}_{t_y}(y)$ is tangent to $\S$ at $x_y$ and touches $\S$ from the interior.
It follows that $u_\theta(y)=t_y>r$, so that $y\in\Om_r$.

If $u_\theta(y)$ attains at some $x_y'\neq x_y$, then we must have $u_\theta(y)<t_y$, and again $x_y'\notin\p\S$, thus we may write $y
=x_y'-u_\theta(y)\nu_F(x_y')
=\zeta_F(x_y',u_\theta(y))$, from which we infer easily that: if $(x_y',u_\theta(y))\in Z_B$, then one has $y\in\zeta_F(Z_B)$; while if $(x_y',u_\theta(y))\notin Z_B$, then we must have  $x_y'\in\S_G$.
On the other hand, since the ball $\overline{\mcW}_{u_\theta(y)}(y)$ is tangent to $\S$ at $x_y'$ and touches $\S$ from the interior, there holds $u_\theta(y)\leq\frac1{\kappa_n(x_y')}$,
thereby $(x_y',u_\theta(y))\in Z_G$, and \eqref{ineq-JN23-(3-15)} follows since
\eq{
y
=\zeta_F(x_y,t_y)
=\zeta_F(x_y',u_\theta(y)).
}

To show \eqref{ineq-JN23-(3-19)}, we consider any $y\in\Om\cap\zeta_F(Z_G\cap(\S_{r}\times(r-\rho,R)))$, i.e., there exist $x_y\in\S_G\cap\S_{r}$ and $t_y\in(r-\rho,R)$ with $t_y\leq\frac1{\kappa_n(x_y)}$, such that $y=\zeta_F(x_y,t_y)$.
As before, $x_y\notin\p\S$.

If $t_y\in(r,R)$, since  $\S_{r}\subset\S=\S_0$, we may exploit \eqref{ineq-JN23-(3-15)} directly to find
\eq{
y\in\Om_r\cup\left\{y\in\zeta_F(Z_G):\mcH^0(\zeta_F^{-1}(y)\cap Z_G)\geq2\right\}\cup\zeta_F(Z_B).
}

If $t_y\in(r-\rho,r]$,
we may then write \eq{
y=x_y-r\nu_F(x_y)+(r-t_y)\nu_F(x_y).
}
Since $x_y\in\S_r$, we see that $x_y-r\nu_F(x)$ belongs to $\Om_r$.
On the other hand, since $r-t_y\in[0,\rho)$, we must have $(r-t_y)\nu_F(x_y)$ belongs to $\mcW_{\rho}$.
\eqref{ineq-JN23-(3-19)} follows
easily.

\noindent{\bf Step 6. We prove \eqref{ineq-Om_r} and \eqref{ineq-Sigmma-Sigma_r}.
}

Recall that by \eqref{ineq-JN23-(3-12)}, \eqref{ineq-JN23-(3-10)}, $\Abs{\left\{y\in\zeta_F(Z_G):\mcH^0(\zeta_\theta^{-1}(y)\cap Z_G)\geq2\right\}\cup\zeta_F(Z_B)}$ is controlled by $\norm{H_\S-\lambda}_{L^n(\S)}$, and hence we may exploit the inclusion \eqref{ineq-JN23-(3-15)} (note that $\S_0=\S$), in conjunction with the estimate \eqref{ineq-JN23-(3-13)}, to obtain
\eq{\label{ineq-JN23-(3-14)}
\abs{\Om_r}
\geq&\Abs{
\Om\cap\zeta_F\left(Z_G\cap(\S_0\times(r,R))\right)}-C(n,\theta,C_0)\norm{H_\S-\lambda}_{L^n(\S)}\\
\geq&\frac{(R-r)^{n+1}}{(n+1)R^n}\int_{\S}F(\nu)\rd\mcH^n-C(n,\theta,C_0)\norm{H_\S-\lambda}_{L^n(\S)}\\
=&\frac{P_F(\Om;\mfR^{n+1}_+)}{(n+1)R^n}(R-r)^{n+1}-C(n,\theta,C_0)\norm{H_\S-\lambda}_{L^n(\S)}\\
\overset{\eqref{ineq-JN23-(3-4)}}{\geq}&\frac{\abs{\Om}}{R^{n+1}}(R-r)^{n+1}-C(n,\theta,C_0)\norm{H_\S-\lambda}_{L^n(\S)}.
}
It is left to prove the other direction of
\eqref{ineq-Om_r}, for clarification we separate the proof into the following claims.

{\bf Claim 1.} The refined version of \eqref{ineq-Om-zeta_theta} holds, precisely, for any $0\leq s<t$, there holds
\eq{\label{ineq-Om-zeta_theta-refined}
\Om_s\setminus\Om_t
\subset\zeta_F(Z_G^{s,t})\cup\zeta_F(Z_B),
}
where $Z_G^{s,t}=Z_G\cap(\S_s\times[s,t])$.

To see this, let us fix any $y\in\Om_s\setminus\Om_t$, by definition we shall have $s<u_\theta(y)\leq t$, and hence there exists $x_y\in\S_{u_\theta(y)}\subset\S_s$, at which $u_\theta(y)$ is attained.
Clearly, we must have $u_\theta(y)\leq\frac1{\kappa_n(x_y)}$ if $\kappa_n(x_y)>0$.

If $x_y\in\S_G$, then it is easy to see that $(x_y,u_\theta(y))\in Z_G^{s,t}$, and hence $y=\zeta_F(x_y,u_\theta(y))\in\zeta_F( Z_G^{s,t})$.

If $x_y\in\S_B'$, since by definition $\tilde R\geq u_\theta(y)$, we have $(x_y,u_\theta(y))\in Z_B'$, so that $y\in\zeta_F(Z_B')$;
if $x_y\in\S_B''$, we see that $\kappa_n(x_y)>0$ by definition, and hence $u_\theta(y)\leq\frac1{\kappa_n(x_y)}$ as argued above, showing that $y\in\zeta_F(Z_B'')$. In particular, this proves \eqref{ineq-Om-zeta_theta-refined}.

{\bf Claim 2.} $\abs{\Om_R}$ is almost negligible in terms of the $L^n$-deficit.

We first observe that in the statement together with the proof of \eqref{ineq-Om-zeta_theta-refined}, if we take $s=R$ and $t=\infty$, we shall get
\eq{\label{ineq-Om_R-zeta_theta}
\Om_R\subset\zeta_F(Z_G^{R,\infty})\cup\zeta_F(Z_B).
}
Notice also that, on $\S_G$ one has $\frac12\lambda\leq H_\S\leq2\lambda$, thus if in addition $R=\frac{n}\lambda<\frac{n}{H_\S(x)}$, then $0<\frac{\lambda-H_\S(x)}{H_\S(x)}<1$.
Taking also \eqref{ineq-JN23-(3-10)} into account, we may use the inclusion \eqref{ineq-Om_R-zeta_theta} to find
\eq{
&\abs{\Om_R}
\leq\abs{\zeta_F(Z_G^{R,\infty})}+\abs{\zeta_F(Z_B)}\\
\leq&\int_{Z_G}F(\nu)\int_R^{\max\{R,\frac{n}{H_\S(x)}\}}(1-t\frac{H_\S(x)}n)^n\rd t\rd\mcH^n+C(n,\theta,C_0)\norm{H_\S-\lambda}_{L^n(\S)}\\
\leq&C(n)\int_{Z_G}F(\nu)\lambda^{-(n+1)}(\lambda-H_\S)^n\rd\mcH^n+C(n,\theta,C_0)\norm{H_\S-\lambda}_{L^n(\S)}\\
\leq& C(n,\theta,C_0)\norm{H_\S-\lambda}_{L^n(\S)},
}
which proves the claim.

Let us finish the proof of \eqref{ineq-Om_r}, by using {\bf Claim 2}, then {\bf Claim 1} (with $s=r$, $t=R=\frac{n}{\lambda}$), and also \eqref{ineq-JN23-(3-10)}, we find
\eq{\label{ineq-JN23-(3-17)}
\abs{\Om_r}
\leq&\Abs{\Om_r\setminus\Om_R}+C(n,\theta,C_0)\norm{H_\S-\lambda}_{L^n(\S)}\\
\leq&\abs{\zeta_F(Z_G^{r,R})}+\abs{\zeta_F(Z_B)}+C(n,\theta,C_0)\norm{H_\S-\lambda}_{L^n(\S)}\\
\leq&\int_{\S_G\cap\S_r}F(\nu)\int_r^R(1-t\frac{H_\S(x)}n)^n\rd t\rd\mcH^n+C(n,\theta,C_0)\norm{H_\S-\lambda}_{L^n(\S)}\\
\leq&\int_{\S_G\cap\S_r}F(\nu)\int_r^R(1-t\frac\lambda{n})^n\rd t\rd\mcH^n+C(n,\theta,C_0)\norm{H_\S-\lambda}_{L^n(\S)}\\
\leq&\frac{(R-r)^{n+1}}{(n+1)R^n}\int_{\S_r} F(\nu)\rd\mcH^n+C(n,\theta,C_0)\norm{H_\S-\lambda}_{L^n(\S)},
}
after using the fact that $\S_r\subset\S$, 
then \eqref{ineq-JN23-(3-4)}, we arrive at
\eq{\label{ineq-JN23-(3-17)'}
\abs{\Om_r}
\leq\frac{\abs{\Om}}{R^{n+1}}(R-r)^{n+1}+C(n,\theta,C_0)\norm{H_\S-\lambda}_{L^n(\S)}.
}

\eqref{ineq-Sigmma-Sigma_r} is a direct consequence of the combination of \eqref{ineq-JN23-(3-17)} and the second inequality in \eqref{ineq-JN23-(3-14)}.

\noindent{\bf Step 7. We complete the proof by showing \eqref{ineq-Om_r-rho}.
}

{\bf Claim 3.} $(\Om_r+\mcW_{\rho})\cap\overline{\mfR^{n+1}_+}\subset\Om_{r-\rho}$.

To see this, we fix any $\tilde y\in(\Om_r+\mcW_{\rho})\cap\overline{\mfR^{n+1}_+}$, which can be decomposed to be
\eq{
\tilde y=y+\xi,\text{ }y\in\Om_r,\text{ }\xi\in \mcW_{\rho}.
}
For any $z\in\S$, we may use the triangle inequality for $F^o$ to find
\eq{
F^o(z-\tilde y)+F^o(\xi)
=F^o(z-y-\xi)+F^o(\xi)
\geq F^o(z-y)\geq u_\theta(y)
>r,
}
so that
\eq{
F^o(z-\tilde y)>r-F^o(\xi)>r-\rho\text{ for every }z\in\S,
}
implying that $\tilde y\in\Om_{r-\rho}$, and proves the claim.

Firstly, we exploit the inclusion \eqref{ineq-JN23-(3-19)}, in conjunction with the estimates \eqref{ineq-JN23-(3-12)}, \eqref{ineq-JN23-(3-10)}, \eqref{ineq-JN23-(3-13)} (letting $s=r$), \eqref{ineq-Sigmma-Sigma_r}, and then \eqref{ineq-JN23-(3-4)} (recall that $\lambda=\frac{n}R$) to get
\eq{
&\Abs{(\Om_r+\mcW_{\rho})\cap\overline{\mfR^{n+1}_+}}\\
\geq&\Abs{
\Om\cap\zeta_F\left(Z_G\cap(\S_r\times(r-\rho,R))\right)}-C(n,\theta,C_0)\norm{H_\S-\lambda}_{L^n(\S)}\\
\geq&\frac{(R-(r-\rho))^{n+1}}{(n+1)R^n}\int_{\S_r}F(\nu)\rd\mcH^n-C(n,\theta,C_0)\norm{H_\S-\lambda}_{L^n(\S)}\\
\geq&\frac{(R-(r-\rho))^{n+1}}{(n+1)R^n}\int_{\S}F(\nu)\rd\mcH^n-\frac{C(n,\theta,C_0)}{(R-r)^{n+1}}\norm{H_\S-\lambda}_{L^n(\S)}\\
=&\frac{P_F(\Om;\mfR^{n+1}_+)}{(n+1)R^n}(R-(r-\rho))^{n+1}-\frac{C(n,\theta,C_0)}{(R-r)^{n+1}}\norm{H_\S-\lambda}_{L^n(\S)}\\
\geq&\frac{\abs{\Om}}{R^{n+1}}(R-(r-\rho))^{n+1}-\frac{C(n,\theta,C_0)}{(R-r)^{n+1}}\norm{H_\S-\lambda}_{L^n(\S)}.
}
On the other hand, exploiting {\bf Claim 3}, and \eqref{ineq-JN23-(3-17)'} with $r$ replaced by $r-\rho$, we find
\eq{
\Abs{(\Om_r+\mcW_{\rho})\cap\overline{\mfR^{n+1}_+}}
\leq\abs{\Om_{r-\rho}}
\leq\frac{\abs{\Om}}{R^{n+1}}(R-(r-\rho))^{n+1}+C(n,\theta,C_0)\norm{H_\S-\lambda}_{L^n(\S)},
}
which completes the proof.
\end{proof}

\subsection{Quantitative capillary Alexandrov theorem}

We have now all the requisites to prove our main theorem.

\begin{proof}[Proof of Theorem \ref{Thm-Stability}]
We begin with the notification that, in the proof $C=C(n,\theta,C_0)$ shall be used to denote positive constants that depend only on the dimension $n$, the prescribed capillary angle $\theta$, and the isoperimetric control $C_0$.
The values of $C(n,\theta,C_0)$ may vary from line to line, and the shorthand $C$ shall be adopted unless there is any possible confusion.

For simplicity we denote the $L^n$-deficit as
\eq{
\ep\coloneqq\norm{H_\S-\lambda}_{L^n(\S)}.
}

If $\ep=0$, then we know that $H_\S=\lambda$ for $\mcH^n$-a.e. $x\in\S$, and hence \eqref{formu-1st-variation} can be written as
\eq{
\int_\S{\rm div}_\S X\rd\mcH^n-\cos\theta\int_T{\rm div}_{\p\mbR^{n+1}_+}X\rd\mcH^n
=\lambda\int_\S  X\cdot\nu_\S\rd\mcH^n,
}
for any $X\in C_c^1(\mfR^{n+1},\mfR^{n+1})$ tangent to $\p\mfR^{n+1}_+$;
in other words, $\Om$ is stationary for the $\mcA$-functional, defined in \cite{XZ23}*{Definition 1.1}, so that from \cite{XZ23}*{Theorem 1.3} we deduce that $\Om$ is a disjoint union of $\theta$-balls.

\noindent{\bf Preliminary step: set-ups.}

Let us now continue with $0<\ep\leq\de$, where $\de$ is firstly taken from Proposition \ref{Prop-JN23-Prop3.3}.
We shall choose $\de$ to be further small (if needed) in due course, with the choices depending only on $n,\theta,C_0$.
With this initial choice, we immediately learn from
Proposition \ref{Prop-JN23-Lem2.4} and Remark \ref{Rem-apriori} that there exist positive constants $C$, such that
\eq{
\frac1C
\leq\lambda\leq C,\quad
\frac1C\leq R=\frac{n}\lambda\leq C,
}
and the number of connected components of $\Om$ and their diameters are bounded by some $C$ as well.
We also note that thanks to \eqref{ineq-JN23-(3-4)}, $\abs{\Om}$ can be bounded from above by some $C$, provided that $\de$ is chosen small enough.

We will always assume that $\ep\leq\de<1$, which implies the following relations:
\eq{
\ep
<\ep^\frac{n+1}{n+2}
<\ep^\frac1{n+2}
\leq\ep^\frac1{n(n+2)}
<\ep^\frac1{(n+2)^2}
<1.
}

Let us first decrease $\de$, if necessary, so that $R-\de^\frac{1}{n+2}>\frac12R$, and write
\eq{\label{defn-r_0}
r_0
\coloneqq R-\ep^\frac{1}{n+2}>\frac12R.
}
Thanks to our choice of $\de$, we may rewrite the estimates in Proposition \ref{Prop-JN23-Prop3.3} as
\eq{\label{ineq-JN23-3-21}
\Abs{\abs{\Om_r}-\frac{\abs{\Om}}{R^{n+1}}(R-r)^{n+1}}
\leq C\ep,
}
for any $0<r<R$; and
\eq{\label{ineq-JN23-3-22}
\Abs{\Abs{(\Om_r+\mcW_{\rho})\cap\overline{\mfR^{n+1}_+}}-\frac{\abs{\Om}}{R^{n+1}}(R-(r-\rho))^{n+1}}
\leq\frac{C}{(R-r_0)^{n+1}}\ep
\leq C\ep^\frac{1}{n+2},
}
for any $0\leq\rho\leq r\leq r_0$.

Using \eqref{ineq-JN23-3-21} with $r=r_0$, we find
\eq{
\abs{\Om_{r_0}}
\geq\frac{\abs{\Om}}{R^{n+1}}\ep^\frac{n+1}{n+2}-C\ep
\geq\frac1C\ep^\frac{n+1}{n+2}-C\ep,
}
and hence $\Om_{r_0}$ is nonempty after possibly decreasing $\de$.
Therefore, for any $r'>r_0$ with $r'-r_0$ small enough, we shall have that $\Om_{r'}$ is non-empty as well.
Moreover, for any $x\in\S_{r'}$, by definition we could find some $y_x\in\overline\Om$ such that $u_\theta(y_x)=r_0$ and attains at $x$, that is, we could write $x=y_x+r_0\nu_F(x)$,
meaning that $\S_{r'}\subset(\pr\Om_{r_0}+\overline{\mcW_{r_0}})\cap\overline{\mfR^{n+1}_+}$.
Since $r_0=R-\ep^\frac1{n+2}$, we can then conclude from \eqref{ineq-Sigmma-Sigma_r} that
\eq{
&\mcH^n(\S\setminus(\overline{\Om}_{r_0}+\overline{\mcW_{r_0}})\cap\overline{\mfR^{n+1}_+})
\leq \mcH^n(\S\setminus\S_{r'})\\
\leq&\frac1{m_F}\int_{\S\setminus\S_{r'}}F(\nu)\rd\mcH^n
\leq C\frac\ep{\left(r_0-r'+\ep^\frac1{n+2}\right)^{n+1}}.
}
Letting $r'\ra r_0^+$, this reads
\eq{\label{ineq-JN23-(3-23)}
\mcH^n(\S\setminus(\overline{\Om}_{r_0}+\overline{\mcW_{r_0}})\cap\overline{\mfR^{n+1}_+})
\leq C\ep^\frac1{n+2}.
}

\noindent{\bf Step 1. We prove that there exists a positive constant
\eq{\label{defn-D_0}
D_0=D_0(n,\theta,C_0)\leq\frac{1-\abs{\cos\theta}}4R,
}
such that for any $x,y\in\Om_{r_0}$,
\eq{\label{ineq-JN23-(3-24)}
\text{either }\abs{x-y}<(1-\abs{\cos\theta})\ep^\frac1{2(n+2)},
\text{ or }
\abs{x-y}\geq D_0.
}
}

Let us write $D\coloneqq\abs{x-y}$ and denote the geodesic segment joining $y,x$ by
\eq{
\underline{xy}
\coloneqq\{tx+(1-t)y:t\in[0,1]\}.
}
We shall assume that 
\eq{\label{ineq-JN23-(3-25)}
D\leq\min\{\frac{1-\abs{\cos\theta}}4R,1\},
}
otherwise \eqref{ineq-JN23-(3-24)} trivially holds.
It follows from \eqref{defn-r_0} that $r_0-\frac{D^2}{(1-\abs{\cos\theta})^2)R}>0$, and hence $\Om_{r_0-\frac{D^2}{(1-\abs{\cos\theta})^2R}}$ is well-defined and nonempty because $\Om_{r_0}$ is nonempty.

We claim that
\eq{
\underline{xy}\subset\Om_{r_0-\frac{D^2}{(1-\abs{\cos\theta)}^2R}}.
}
To see this, let us set $z'$ to be the point on $\underline{xy}$ such that
\eq{
u_\theta(z')
=\min_{p\in\underline{xy}}u_\theta(p),
}
and let $z\in\S$ be the point on which $u_\theta(z')$ is attained.

If $z'=x$ or $y$, then it is easy to see that $\underline{xy}\subset\Om_{r_0}$.
In the case that $z'\neq x,y$,
without loss of generality, we assume that $\abs{x-z'}\leq\frac12\abs{x-y}=\frac12D$.
Clearly,
we shall have $u_\theta(z')\leq u_\theta(x)$.
We may suppose that $u_\theta(z')<r_0$, otherwise we again simply have $\underline{xy}\subset\Om_{r_0}$.

Let us consider the geodesic segment $\gamma$ joining the points 
\eq{
(x-u_\theta(z')\cos\theta E_{n+1}), (y-u_\theta(z')\cos\theta E_{n+1}),
}
which apparently parallels to $\underline{xy}$.
Note also that $z'-u_\theta(z')\cos\theta E_{n+1}\in{\rm int}(\gamma)$.

From the definitions of the shifted distance function and the point $z'$, we know that $\p B_{u_\theta(z')}(z)$ and $\gamma$ are mutually tangent at $z'$, that is to say, $z-(z'-u_\theta(z')\cos\theta E_{n+1})$ is orthogonal to $\gamma$.
On the other hand, a simple application of the triangle inequality gives
\eq{
&\Abs{x-u_\theta(z')\cos\theta E_{n+1}-z}
=\Abs{x-r_0\cos\theta E_{n+1}-z-(u_\theta(z')-r_0)\cos\theta E_{n+1}}\\
\geq&r_0-(r_0-u_\theta(z'))\abs{\cos\theta}
=r_0(1-\abs{\cos\theta})+u_\theta(z')\abs{\cos\theta},
}
where we have used in the first inequality the fact that $r_0<u_\theta(x)$ so that 
\eq{
\abs{x-r_0\cos\theta E_{n+1}-z}\geq u(x-r_0\cos\theta E_{n+1})>r_0
}
thanks to \eqref{ineq-weighted-distance}.
By virtue of the Pythagorean theorem, we obtain
\eq{
&\Abs{x-u_\theta(z')\cos\theta E_{n+1}-z}^2\\
=&\Abs{x-u_\theta(z')\cos\theta E_{n+1}-z'+u_\theta(z')\cos\theta E_{n+1}}^2+\Abs{z'-u_\theta(z')\cos\theta E_{n+1}-z}^2,
}
combining with the previous observations, we get
\eq{
\left(r_0(1-\abs{\cos\theta})+u_\theta(z')\abs{\cos\theta}\right)^2
\leq\frac14D^2+u_\theta(z')^2.
}
Expanding the above expression yields
\eq{
r_0^2(1-\abs{\cos\theta})^2+2r_0u_\theta(z')\abs{\cos\theta}(1-\abs{\cos\theta})
\leq\frac14D^2+u_\theta(z')^2(1-\abs{\cos\theta}^2),
}
shrinking the first order term by virtue of $r_0>u_\theta(z')$, we may rearrange this to read
\eq{
r_0^2(1-\abs{\cos\theta})^2
\leq\frac14D^2+u_\theta(z')^2(1-\abs{\cos\theta})^2.
}
On the other hand, by virtue of \eqref{defn-r_0} and \eqref{ineq-JN23-(3-25)}, we may use a direct computation to find
\eq{
\left(r_0^2-\frac{D^2}{4(1-\abs{\cos\theta})^2}\right)^\frac12
\geq r_0-\frac{D^2}{(1-\abs{\cos\theta})^2R},
}
therefore implying that
\eq{
u_\theta(z')
\geq r_0-\frac{D^2}{(1-\abs{\cos\theta})^2R},
}
which proves the claim.

To proceed,
recalling that ${\rm Lip}(u_\theta)\leq\frac1{1-\abs{\cos\theta}}$, it is then easy to find (if $\rho$ is such that $r-\frac{\rho^2}{(1-\abs{\cos\theta})}>0$)
\eq{
\left(\Om_r+B_{\rho^2}\right)\cap\mfR^{n+1}_+
\subset\Om_{r-\frac{\rho^2}{(1-\abs{\cos\theta})}},
}
and hence
\eq{
\left(\Om_{r_0-\frac{D^2}{(1-\abs{\cos\theta)}^2R}}+B_{D^2}\right)\cap\mfR^{n+1}_+
\subset\Om_{r_0-(1-\abs{\cos\theta}+\frac1R)\frac{D^2}{(1-\abs{\cos\theta})^2}},
}
where the super level-set $\Om_{r_0-(1-\abs{\cos\theta}+\frac1R)\frac{D^2}{(1-\abs{\cos\theta})^2}}$ is well-defined and nonempty thanks to \eqref{defn-r_0} and \eqref{ineq-JN23-(3-25)}.
More precisely,
\eq{
r_0-(1-\abs{\cos\theta}+\frac1R)\frac{D^2}{(1-\abs{\cos\theta})^2}
>&r_0-\frac{D}{1-\abs{\cos\theta}}-\frac1R\frac{D^2}{(1-\abs{\cos\theta})^2}\\
>&\frac12R-\frac14R-\frac1{16}R>0.
}

Notice that $x,y\in\overline{\mfR^{n+1}_+}$, therefore at least half of the round cylinder $\underline{xy}\times B^n_{D^2}$ is contained in
\eq{
\left(\underline{xy}+B_{D^2}\right)\cap\overline{\mfR^{n+1}_+}\subset\left(\Om_{r_0-\frac{D^2}{(1-\abs{\cos\theta)}^2R}}+B_{D^2}\right)\cap\mfR^{n+1}_+
\subset\Om_{r_0-(1-\abs{\cos\theta}+\frac1R)\frac{D^2}{(1-\abs{\cos\theta})^2}}.
}
Exploiting \eqref{ineq-JN23-3-21}, we thus arrive at
\eq{
\frac12\om_nD^{1+2n}
\leq&\Abs{\Om_{r_0-(1-\abs{\cos\theta}+\frac1R)\frac{D^2}{(1-\abs{\cos\theta})^2}}}
\leq\frac{\abs{\Om}}{R^{n+1}}\left(R-r_0+\frac{1-\abs{\cos\theta}+\frac1R}{(1-\abs{\cos\theta})^2}D^2\right)^{n+1}+C\ep\\
=&\frac{\abs{\Om}}{R^{n+1}}\left(\ep^\frac1{n+2}+\frac{1-\abs{\cos\theta}+\frac1R}{(1-\abs{\cos\theta})^2}D^2\right)^{n+1}+C\ep
\leq C\ep^\frac{n+1}{n+2}+CD^{2(n+1)}.
}
Therefore, whether $D\leq(1-\abs{\cos\theta})\ep^\frac1{2(n+2)}$, or $D\geq(1-\abs{\cos\theta})\ep^\frac1{2(n+2)}$ so that
\eq{
\ep^\frac{n+1}{n+2}
\leq C(n,\theta)D^{2(n+1)},
}
and hence the above estimate reads
\eq{
\frac12\om_n D^{1+2n}
\leq CD^{2(n+1)},
}
implying that $D\geq C(n,\theta,C_0)>0$.
\eqref{ineq-JN23-(3-24)} then follows from the assumption \eqref{ineq-JN23-(3-25)}.

In view of \eqref{defn-m-M-F^o}, we may rewrite \eqref{ineq-JN23-(3-24)} in the following way: for any $x,y\in\Om_{r_0}$,
\eq{\label{ineq-JN23-(3-24)-F^o}
\text{either }F^o(x-y)<\ep^\frac1{2(n+2)},
\text{ or }F^o(x-y)\geq\frac{D_0}{1+\abs{\cos\theta}}\eqqcolon D_1.
}
By virtue of this estimate and after further decreasing $\de$, if needed, so that \eq{\label{ineq-delta-D_1}
\ep^\frac1{2(n+2)}
\leq\de^\frac1{2(n+2)}<\frac{1-\abs{\cos\theta}}8D_1,
}
we may decompose $\Om_{r_0}$ into $N$ clusters $\Om_{r_0}^1,\ldots,\Om_{r_0}^N$ by fixing a point $o_i\in\Om_{r_0}$ and defining $\Om_{r_0}^i$ as
\eq{
\Om_{r_0}^i
\coloneqq\{x\in\Om_{r_0}:F^o(x-o_i)
\leq\frac{1-\abs{\cos\theta}}8D_1\},
}
since for any $x,y\in\Om^i_{r_0}$, we have
\eq{
F^o(x-y)
\leq&F^o(x-o_i)+F^o(o_i-y)
\leq F^o(x-o_i)+\frac{M_{F^o}}{m_{F^o}}F^o(y-o_i)\\
\overset{\eqref{defn-m-M-F^o}}{\leq}&\frac{1-\abs{\cos\theta}}8D_1+\frac{1+\abs{\cos\theta}}8D_1
<\frac18D_1+\frac28D_1<D_1.
}

For the sake of simplicity, we denote by $\ep_0=\ep^\frac1{2(n+2)}$.
Clearly, from \eqref{ineq-JN23-(3-24)-F^o}, we shall have $\Om_{r_0}^i\subset \mcW_{\ep_0}(o_i)\cap\overline{\mfR^{n+1}_+}$, with $F^o(o_i-o_j)\geq D_1$ whenever $i\neq j$.
Thus, using the triangle inequality for $F^o$, we find: for every $\rho>0$,
\eq{\label{ineq-JN23-(3-28)}
\bigcup_{i=1}^N\left(\mcW_{\rho}(o_i)\cap\overline{\mfR^{n+1}_+}\right)
\subset(\Om_{r_0}+\mcW_{\rho})\cap\overline{\mfR^{n+1}_+}
\subset\bigcup_{i=1}^N\left(\mcW_{\rho+\ep_0}(o_i)\cap\overline{\mfR^{n+1}_+}\right).
}

Then we set $\tilde D_1=(1-\abs{\cos\theta})D_1$ and choose $\rho=\frac14\tilde D_1$,
thanks to \eqref{ineq-JN23-(3-24)-F^o}, \eqref{ineq-delta-D_1}, we see that $\mcW_{\frac14\tilde D_1}(o_i)\cap\mfR^{n+1}_+$ are mutually disjoint.
More precisely, for any $y\in\mcW_{\frac14\tilde D_1}(o_j)\cap\mfR^{n+1}_+$, we have for any $i\neq j$,
\eq{
D_1
\leq F^o(o_j-o_i)
\leq&F^o(y-o_i)+F^o(o_j-y),
}
and
\eq{
F^o(o_j-y)
\leq\frac{M_{F^o}}{m_{F^o}}F^o(y-o_j)
\leq\frac{1+\abs{\cos\theta}}4D_1
<\frac12D_1,
}
so that $y\notin\mcW_{\frac14\tilde D_1}(o_i)\cap\mfR^{n+1}_+$, since
\eq{
F^o(y-o_i)
\geq D_1-F^o(o_j-y)
>\frac12D_1>\frac14\tilde D_1.
}

Also, because $D_1\leq D_0$, we thus deduce from \eqref{defn-r_0} and \eqref{defn-D_0} that each $\mcW_{\frac14D_1}(o_i)\cap\mfR^{n+1}_+$ is contained in $\Om$, which in turn implies that the number of clusters $N$ is bounded from above by some positive constant $N_0=N_0(n,\theta,C_0)$.

To proceed, we denote by $S_i=\p \mcW_{\frac14\tilde D_1}(o_i)\cap\overline{\mfR^{n+1}_+}$ the spherical caps supported on $\p\mfR^{n+1}_+$, and $\theta_i$ the corresponding contact angle.
Clearly, it must be that $\theta_i\in[\theta,\pi]$; in the case that $S_i\cap\p\mfR^{n+1}_+=\emptyset$, the convention $\theta_i=\pi$ will be used.
It follows immediately that
\eq{
\Abs{\mcW_{\frac14\tilde D_1}(o_i)\cap\overline{\mfR^{n+1}_+}}
=\mathfrak{b}_{\theta_i}(\frac14\tilde D_1)^{n+1}.
}

Note that the difference of the volumes of $\mcW_{\frac14\tilde D_1}(o_i)\cap\mfR^{n+1}_+$ and $\mcW_{\frac14\tilde D_1+\ep_0}(o_i)\cap\mfR^{n+1}_+$ satisfies the estimate
\eq{\label{ineq-volume-difference-balls}
&\Abs{(\mcW_{\frac14\tilde D_1+\ep_0}(o_i)\cap\overline{\mfR^{n+1}_+})\setminus(\mcW_{\frac14\tilde D_1}(o_i)\cap\overline{\mfR^{n+1}_+)}}\\
\leq&\Abs{\mcW_{\frac14\tilde D_1+\ep_0}(o_i)\setminus \mcW_{\frac14\tilde D_1}(o_i)}
\leq C\ep_0
=C\ep^\frac1{2(n+2)},
}
and hence from \eqref{ineq-JN23-(3-28)} we deduce
\eq{
\Abs{(\Om_{r_0}+\mcW_{\frac14\tilde D_1})\cap\overline{\mfR^{n+1}_+}-\sum_{i=1}^N\mathfrak{b}_{\theta_i}(\frac14\tilde D_1)^{n+1}}
\leq C\ep^\frac1{2(n+2)}.
}
On the other hand,
choosing $\rho=\frac14\tilde D_1$ in \eqref{ineq-Om_r-rho}, we obtain
\eq{
\Abs{(\Om_{r_0}+\mcW_{\frac14\tilde D_1})\cap\overline{\mfR^{n+1}_+}-\frac{\abs{\Om}}{R^{n+1}}(\frac14\tilde D_1+\ep^\frac1{n+2})^{n+1}}
\leq C\ep^\frac1{(n+2)}.
}
Combining these estimates, we find that
\eq{\label{ineq-JN23-(3-31)}
\Abs{\abs{\Om}-\sum_{i=1}^N\mathfrak{b}_{\theta_i}R^{n+1}}
\leq C\ep^\frac1{2(n+2)},
}
and hence from \eqref{ineq-JN23-(3-4)} (recall that $\lambda=\frac{n}R$)
\eq{
\Abs{P_F(\Om;\mfR^{n+1}_+)-(n+1)\sum_{i=1}^N\mathfrak{b}_{\theta_i}R^n}
\leq C\ep^\frac1{2(n+2)}.
}

\noindent{\bf Step 2. We locate the centers $\{o_i\}_{i=1,\ldots, N}$.
}

Using \eqref{ineq-JN23-(3-31)}, \eqref{ineq-JN23-3-22} (with $\rho=r=r_0$), \eqref{ineq-JN23-(3-28)} (with $\rho=r_0$), we find
\eq{
\sum_{i=1}^N\mathfrak{b}_{\theta_i}R^{n+1}
\leq&\abs{\Om}+C\ep^\frac1{2(n+2)}
\leq\Abs{(\Om_{r_0}+\mcW_{r_0})\cap\overline{\mfR^{n+1}_+}}+C\ep^\frac1{2(n+2)}\\
\leq&\Abs{\bigcup_{i=1}^N\left(\mcW_{r_0+\ep_0}(o_i)\cap\overline{\mfR^{n+1}_+}\right)}+C\ep^\frac1{2(n+2)}.
}
Let $\tilde\theta_i$ denote the contact angle of $\p \mcW_{r_0}(o_i)$ with $\p\mfR^{n+1}_+$. 
Arguing as \eqref{ineq-volume-difference-balls}, we get
\eq{
\sum_{i=1}^N\mathfrak{b}_{\theta_i}R^{n+1}
\leq&\Abs{\bigcup_{i=1}^N\left(\mcW_{r_0}(o_i)\cap\overline{\mfR^{n+1}_+}\right)}+C\ep^\frac1{2(n+2)}
\leq\sum_{i=1}^N\mathfrak{b}_{\tilde\theta_i}R^{n+1}+C\ep^\frac1{2(n+2)},
}
which yields that
\eq{
\sum_{i=1}^N\left(\mathfrak{b}_{\theta_i}-\mathfrak{b}_{\tilde\theta_i}\right)
\leq C\ep^\frac1{2(n+2)}.
}
By virtue of the monotonicity result (Lemma \ref{Lem-monotonicity-theta}), and recall that by definition, $\frac14\tilde D_1<\frac14R<\frac12R<r_0<R$, we thus obtain $\theta\leq\tilde\theta_i\leq\theta_i\leq\pi$, and
\eq{\label{ineq-b_theta-b_theta'}
0
\leq\sum_{i=1}^N\left(\mathfrak{b}_{\theta_i}-\mathfrak{b}_{\tilde\theta_i}\right)
\leq C\ep^\frac1{2(n+2)}.
}

Applying \eqref{ineq-theta_rho-theta} for $\theta_i$ and $\tilde\theta_i$, if both of them $<\pi$, we obtain
\eq{\label{ineq-cos-tildetheta_i-theta_i}
\cos\tilde\theta_i-\cos\theta_i
=&-\frac{\left<o_i,E_{n+1}\right>}{r_0}
-(-\frac{\left<o_i,E_{n+1}\right>}{\frac14\tilde D_1})\\
=&(\frac4{\tilde D_1}-\frac1{r_0})\left<o_i,E_{n+1}\right>
\geq\frac2R\left<o_i,E_{n+1}\right>.
}
Moreover, exploiting \eqref{eq-b_theta}, we can estimate the location of $o_i$.
Precisely, we consider the following cases separately.

{\bf Case 1}.  $\frac\pi2<\theta
\leq\tilde\theta_i\leq\theta_i\leq\pi$.

Note that in this case, we have $-1\leq\cos\theta_i\leq\cos\tilde\theta_i\leq\cos\theta<0$, and hence
\eq{
\mathfrak{b}_{\theta_i}-\mathfrak{b}_{\tilde\theta_i}
=&C(n)\int_{1-\cos^2\theta_i}^{1-\cos^2\tilde\theta_i}t^{\frac{n}2}(1-t)^{-\frac12}\rd t\\
\geq&C(n)\left(1-\cos^2\tilde\theta_i-(1-\cos^2\theta_i)\right)(1-\cos^2\theta_i)^\frac{n}2(1-(1-\cos^2\theta_i))^{-\frac12}\\
\geq&C(n)(\cos\tilde\theta_i-\cos\theta_i)(-\cos\tilde\theta_i-\cos\theta_i)\sin^n\theta_i(-\cos\theta_i)^{-1}\\
\geq& C(n)(-2\cos\theta)(\cos\tilde\theta_i-\cos\theta_i)\sin^n\theta_i,
}
which, in conjunction with \eqref{ineq-b_theta-b_theta'},
shows that
\eq{\label{ineq-cos-sin^n-theta_i}
(\cos\tilde\theta_i-\cos\theta_i)\sin^n\theta_i
\leq C\ep^\frac1{2(n+2)}.
}

If $\sin^n\theta_i\geq\ep^\frac{n}{2(n+2)^2}$, then we immediately deduce from the above inequality that
\eq{
\cos\tilde\theta_i-\cos\theta_i
\leq C\ep^\frac1{(n+2)^2}.
}
Taking \eqref{ineq-cos-tildetheta_i-theta_i} into consideration, we thus deduce that
\eq{
\left<o_i,E_{n+1}\right>\leq C\ep^\frac1{(n+2)^2}.
}

If not, then we have $\sin^n\theta_i<\ep^\frac{n}{2(n+2)^2}$ (that is, $\sin^2\theta_i<\ep^\frac{1}{(n+2)^2}$).
By virtue of \eqref{ineq-b_theta-b_theta'} and \eqref{eq-b_theta}, we find
\eq{
C\ep^\frac1{2(n+2)}
\geq \mathfrak{b}_{\theta_i}-\mathfrak{b}_{\tilde\theta_i}
=&C(n)\int_{\sin^2\theta_i}^{\sin^2\tilde\theta_i}t^{\frac{n}2}(1-t)^{-\frac12}\rd t
>C(n)\int_{\ep^\frac1{(n+2)^2}}^{\sin^2\tilde\theta_i}t^{\frac{n}2}(1-t)^{-\frac12}\rd t\\
\geq&C(n)(\sin^2\tilde\theta_i-\ep^\frac1{(n+2)^2})\ep^\frac{n}{2(n+2)^2}.
}
A direct computation then shows that
\eq{
\sin^2\tilde\theta_i
\leq C\ep^\frac1{(n+2)^2},
}
and equivalently,
\eq{
-1\leq\cos\tilde\theta_i
\leq-1+C\ep^\frac1{(n+2)^2}.
}
Recalling \eqref{ineq-theta_rho-theta} and the definition of $\tilde\theta_i$, we deduce that
\eq{
\left<o_i,E_{n+1}\right>
\geq (1+\cos\theta)r_0-C\ep^\frac1{(n+2)^2}.
}


{\bf Case 2.1.} $0<\theta\leq\frac\pi2$, and $\theta\leq\tilde\theta_i\leq\theta_i\leq\frac\pi2$.

In this case, we have $\sin\theta\leq\sin\tilde\theta_i\leq\sin\theta_i\leq1$, $0\leq\cos\theta_i\leq\cos\tilde\theta_i\leq\cos\theta$.
By using \eqref{eq-b_theta}, we find
\eq{
\mathfrak{b}_{\theta_i}-\mathfrak{b}_{\tilde\theta_i}
=&C(n)\int_{1-\cos^2\tilde\theta_i}^{1-\cos^2\theta_i}t^\frac{n}2(1-t)^{-\frac12}\rd t.
}
If $\sin^2\tilde\theta_i=1$, we must have $\sin^2\theta_i=1$ as well, it follows from \eqref{ineq-theta_rho-theta} that this case is only possible if and only if $\theta=\frac\pi2$ and $\left<o_i,E_{n+1}\right>=0$.

Otherwise, we learn from the above estimate that
\eq{
\mathfrak{b}_{\theta_i}-\mathfrak{b}_{\tilde\theta_i}
\geq C(n)(\cos\tilde\theta_i-\cos\theta_i)(\cos\tilde\theta_i+\cos\theta_i)(\cos\tilde\theta_i)^{-1}\sin^n\tilde\theta_i
\geq C(n,\theta)(\cos\tilde\theta_i-\cos\theta_i),
}
which, in conjunction with \eqref{ineq-b_theta-b_theta'} and \eqref{ineq-cos-tildetheta_i-theta_i},
shows that
\eq{
\left<o_i,E_{n+1}\right>
\leq C\ep^\frac1{2(n+2)}.
}

{\bf Case 2.2.} $0<\theta\leq\frac\pi2$, and $\frac\pi2\leq\tilde\theta_i\leq\theta_i\leq\pi$.

In this case, we have $0\leq\sin\theta_i\leq\sin\tilde\theta_i\leq1$, $-1\leq\cos\theta_i\leq\cos\tilde\theta_i\leq0\leq\cos\theta$.
Using \eqref{eq-b_theta}, we find
\eq{
\mathfrak{b}_{\theta_i}-\mathfrak{b}_{\tilde\theta_i}
=&C(n)\int_{1-\cos^2\theta_i}^{1-\cos^2\tilde\theta_i}t^{\frac{n}2}(1-t)^{-\frac12}\rd t.
}
If $\sin\theta_i=1$, then we must have
$\sin\tilde\theta_i=\sin\theta=1$ as well,
it follows from \eqref{ineq-theta_rho-theta} that $\left<o_i,E_{n+1}\right>=0$ and $\theta=\frac\pi2$.

If $\sin\theta_i<1$, we learn from the above estimate that
\eq{
\mathfrak{b}_{\theta_i}-\mathfrak{b}_{\tilde\theta_i}
\geq C(n)(\cos\tilde\theta_i-\cos\theta_i)\frac{\cos\tilde\theta_i+\cos\theta_i}{\cos\theta_i}\sin^n\theta_i
\geq C(n)(\cos\tilde\theta_i-\cos\theta_i)\sin^n\theta_i,
}
which, in conjunction with \eqref{ineq-b_theta-b_theta'},
shows that
\eq{
(\cos\tilde\theta_i-\cos\theta_i)\sin^n\theta_i
\leq C\ep^\frac1{2(n+2)}.
}
The rest of the proof of this case follows from that of {\bf Case 1}.

{\bf Case 2.3.} 
$0<\theta\leq\frac\pi2$, and $0<\theta\leq\tilde\theta_i\leq\frac\pi2\leq\theta_i\leq\pi$.

Using \eqref{eq-b_theta}, we find
\eq{\label{ineq-max-sin-theta_i-theta'_i}
\mathfrak{b}_{\theta_i}-\mathfrak{b}_{\tilde\theta_i}
=&\om_{n+1}-\frac{\om_{n+1}}2\left(I_{\sin^2\theta_i}(\frac{n+2}2,\frac12)+I_{\sin^2\tilde\theta_i}(\frac{n+2}2,\frac12)\right)\\
=&C(n)\left(\int_{\sin^2\theta_i}^1t^{\frac{n}2}(1-t)^{-\frac12}\rd t+\int_{\sin^2\tilde\theta_i}^1t^{\frac{n}2}(1-t)^{-\frac12}\rd t\right)\\
\geq&2C(n)\int^1_{\max\{\sin^2\theta_i,\sin^2\tilde\theta_i\}}t^\frac{n}2(1-t)^{-\frac12}\rd t.
}
If $\max\{\sin^2\theta_i,\sin^2\tilde\theta_i\}=1$, then it is easy to see that either $\min\{\sin^2\theta_i,\sin^2\tilde\theta_i\}=1$ as well, with $\theta=\frac\pi2$; or one of the following situations:

{\bf Case 2.3.1.}
$\theta\leq\tilde\theta_i<\frac\pi2=\theta_i$.

In this case, we first apply \eqref{ineq-theta_rho-theta} on $\theta_i$ to see that
\eq{
\left<o_i,E_{n+1}\right>
=\frac{\tilde D_1}4\cos\theta,
}
and then use \eqref{eq-b_theta} to obtain
\eq{
\mathfrak{b}_{\theta_i}-\mathfrak{b}_{\tilde\theta_i}
=&C(n)\int^1_{\sin^2\tilde\theta_i}t^\frac{n}2(1-t)^{-\frac12}\rd t
\geq C(n)(1-\sin^2\tilde\theta_i)\sin^n\tilde\theta_i(\cos\tilde\theta_i)^{-1}\\
=& C(n)\sin^n\tilde\theta_i\cos\tilde\theta_i
\geq C(n,\theta)\cos\tilde\theta_i.
}
By virtue of \eqref{ineq-b_theta-b_theta'} and again, \eqref{ineq-theta_rho-theta}, we find
\eq{
\frac{\tilde D_1}4\cos\theta
=\left<o_i,E_{n+1}\right>
\geq r_0\cos\theta-C\ep^\frac1{2(n+2)},
}
implying that $0<\cos\theta\leq C\ep^\frac1{2(n+2)}$, and in turn,
\eq{
\left<o_i,E_{n+1}\right>
=\frac{\tilde D_1}4\cos\theta
\leq C\ep^\frac1{2(n+2)}.
}

{\bf Case 2.3.2.} $0<\theta\leq\tilde\theta_i=\frac\pi2<\theta_i\leq\pi$.

We first observe that $\theta_i\leq\frac34\pi$, otherwise
$\mathfrak{b}_{\theta_i}-\mathfrak{b}_{\tilde\theta_i}>\mathfrak{b}_{\frac34\pi}-\mathfrak{b}_{\frac12\pi}$, a contradiction to \eqref{ineq-b_theta-b_theta'}. 
Then we may
argue as {\bf Case 2.3.1} to get
\eq{
\left<o_i,E_{n+1}\right>
=r_0\cos\theta,
}
and also
\eq{
\frac{4r_0-\tilde D_1}{\tilde D_1}\cos\theta
=-\cos\theta+\frac4{\tilde D_1}\left<o_i,E_{n+1}\right>
=-\cos\theta_i
\leq C\ep^\frac1{2(n+2)}.
}
It follows that
\eq{
\cos\theta\leq C\ep^\frac1{2(n+2)},
}
and hence
\eq{
\left<o_i,E_{n+1}\right>
=r_0\cos\theta
\leq C\ep^\frac1{2(n+2)}.
}

It is thus left to consider the situation when $\max\{\sin^2\theta_i,\sin^2\tilde\theta_i\}<1$, namely, $0<\theta\leq\tilde\theta_i<\frac\pi2<\theta_i\leq\pi$.

{\bf Case 2.3.3.} 
$0<\theta\leq\tilde\theta_i<\frac\pi2<\theta_i\leq\pi$, with $\sin\theta_i<\sin\tilde\theta_i<1$.

Using \eqref{ineq-max-sin-theta_i-theta'_i} and taking \eqref{ineq-b_theta-b_theta'} into account, we find:
\eq{
C\ep^\frac1{2(n+2)}
\geq C(n)\sin^n\tilde\theta_i\cos\tilde\theta_i
\geq C(n,\theta)\cos\tilde\theta_i,
}
thus
\eq{
1-C\ep^\frac1{n+2}
\leq\sin^2\tilde\theta_i<1.
}
Substituting this back into \eqref{ineq-max-sin-theta_i-theta'_i}, we get
\eq{
C\ep^\frac1{2(n+2)}
\geq\mathfrak{b}_{\theta_i}-\mathfrak{b}_{\tilde\theta_i}
\geq& C(n)\left(\sin^n\theta_i(-\cos\theta_i)+\int^1_{1-C\ep^\frac1{n+2}}t^\frac{n}2(1-t)^{-\frac12}\rd t\right)\\
\geq&C(n)\left(\sin^n\theta_i(-\cos\theta_i)+C\ep^\frac1{2(n+2)}\right),
}
therefore
\eq{
\sin^n\theta_i(-\cos\theta_i)
\leq C\ep^\frac1{2(n+2)},
}
combined with the estimate that $\cos\tilde\theta_i\leq C\ep^\frac1{2(n+2)}$, we again deduce \eqref{ineq-cos-sin^n-theta_i}.
Then we may follow the proof of {\bf Case 1} to conclude that
\eq{
\left<o_i,E_{n+1}\right>
\leq C\ep^\frac1{(n+2)^2}.
}
Note that the situation $\sin^n\theta_i<\ep^\frac{n}{2(n+2)^2}$ would not happen, because that means $\theta_i$ is close to being $\pi$, which contradicts to the second equality in \eqref{ineq-max-sin-theta_i-theta'_i}, due to the estimate \eqref{ineq-b_theta-b_theta'}.

{\bf Case 2.3.4.} 
$0<\theta\leq\tilde\theta_i<\frac\pi2<\theta_i\leq\pi$, with $\sin\tilde\theta_i\leq\sin\theta_i<1$.

We again use \eqref{ineq-max-sin-theta_i-theta'_i} and \eqref{ineq-b_theta-b_theta'} to obtain
\eq{
\sin^n\theta_i(-\cos\theta_i)
\leq C\ep^\frac1{2(n+2)}.
}
Since $\sin\theta\leq\sin\tilde\theta_i\leq\sin\theta_i$, we deduce from the above estimate that
\eq{
0<-\cos\theta_i\leq C\ep^\frac1{2(n+2)}.
}
The rest of the proof follows from that of {\bf Case 2.3.3}.

To complete this step, we point out that, despite the different powers of $\ep$ we obtain when $\theta\in(0,\frac\pi2]$ or $\theta\in(\frac\pi2,\pi)$,
for readers' convenience, we use the following unified estimate on the locations of $o_1,\ldots,o_N$ for any $\theta\in(0,\pi)$: either
\eq{
\left<o_i,E_{n+1}\right>
\leq C\ep^\frac1{(n+2)^2},
}
or
\eq{\label{ineq-o_i,E_n+1-lowerbound}
\left<o_i,E_{n+1}\right>
\geq (1+\cos\theta)r_0-C\ep^\frac1{(n+2)^2}.
}

\noindent{\bf Step 3. We improve the lower bound of \eqref{ineq-JN23-(3-24)-F^o}.
}

Precisely, we show that for the previously defined $o_i\in\Om_{r_0}^i$ and $o_j\in\Om_{r_0}^j$, there exists a positive constant $C_1=C_1(n,\theta,C_0)$ such that for $\theta\in[\frac\pi2,\pi)$,
\eq{\label{ineq-JN23-(3-29)}
\abs{o_i-o_j}
\geq2R-2C_1\ep^\frac1{(n+2)^2}\text{ whenever }i\neq j,
}
and for $\theta\in(0,\frac\pi2)$, if $o_i$ or $o_j$ satisfies \eqref{ineq-o_i,E_n+1-lowerbound}, then \eqref{ineq-JN23-(3-29)} holds; otherwise, one must have
\eq{
F^o(o_i-o_j)
\geq2R-2C_1\ep^\frac1{(n+2)^2},\text{ whenever }i\neq j.
}

{\bf Case 1.} $\theta\in[\frac\pi2,\pi)$. 

We claim that there exists a positive constant $C_1=C_1(n,\theta,C_0)$, such that if there is some $0<h<\frac12R$ satisfying $\abs{o_i-o_j}<2R-2h$ for some $i\neq j$, then it must be that $h\leq C_1\ep^\frac1{(n+2)^2}$.

If this is true, then it follows immediately that we must have
\eq{
\abs{o_i-o_j}
\geq2R-2C_1\ep^\frac1{(n+2)^2}\text{ whenever }i\neq j.
}

Let us now prove the claim. First, note that there holds
\eq{
\Abs{o_i-R\cos\theta E_{n+1}-(o_j-R\cos\theta E_{n+1})}
=\abs{o_i-o_j}
<2R-2h,
}
which means, the balls $\mcW_{R}(o_i)$ (which is $B_{R;\theta}(o_i)$) and $\mcW_{R}(o_j)$ intersect each other and the intersection contains at least a doubled spherical cap (so-called lens) with height $h$, radius $R$.
On the other hand, since $\theta\in[\frac\pi2,\pi)$, and $o_i,o_j\in\overline{\mfR^{n+1}_+}$, we know that not only the enclosed region of the spherical cap has volume lower bound $C(n)R^{n+1}h^\frac{n+2}2$, but also at least half of it is contained in $\overline{\mfR^{n+1}_+}$; that is to say, we have
\eq{
\Abs{\mcW_{R}(o_i)\cap \mcW_{R}(o_j)\cap\overline{\mfR^{n+1}_+}}
\geq Ch^\frac{n+2}2.
}
This fact, together with \eqref{ineq-JN23-(3-31)}, \eqref{ineq-JN23-3-22} (with $\rho=r=r_0$), \eqref{ineq-JN23-(3-28)} (with $\rho=r_0$), yields
\eq{
\sum_{i=1}^N\mathfrak{b}_{\theta_i}R^{n+1}
\leq&\abs{\Om}+C\ep^\frac1{2(n+2)}
\leq\Abs{(\Om_{r_0}+\mcW_{r_0})\cap\overline{\mfR^{n+1}_+}}+C\ep^\frac1{2(n+2)}\\
\leq&\Abs{\bigcup_{i=1}^N\left(\mcW_{r_0+\ep_0}(o_i)\cap\overline{\mfR^{n+1}_+}\right)}+C\ep^\frac1{2(n+2)}.
}
Arguing as \eqref{ineq-volume-difference-balls}, and using Lemma \ref{Lem-monotonicity-theta}, we get
\eq{
\sum_{i=1}^N\mathfrak{b}_{\theta_i}R^{n+1}
\leq&\Abs{\bigcup_{i=1}^N\left(\mcW_{r_0}(o_i)\cap\overline{\mfR^{n+1}_+}\right)}+C\ep^\frac1{2(n+2)}\\
\leq&\sum_{i=1}^N\mathfrak{b}_{\tilde\theta_i}R^{n+1}-\Abs{\mcW_{R}(o_i)\cap \mcW_{R}(o_j)\cap\overline{\mfR^{n+1}_+}}+C\ep^\frac1{2(n+2)}\\
\leq&\sum_{i=1}^N\mathfrak{b}_{\theta_i}R^{n+1}-Ch^\frac{n+2}2+C\ep^\frac1{2(n+2)},
}
which implies that $h\leq C_1\ep^\frac1{(n+2)^2}$ for some $C_1=C_1(n,\theta,C_0)$ and concludes the case.

{\bf Case 2.} $\theta\in(0,\frac\pi2)$.

We first note that, if \eqref{ineq-o_i,E_n+1-lowerbound} is satisfied for one of $o_i$ and $o_j$, let us say $o_i$, then $\mcW_R(o_i)$ is a Euclidean ball which is almost completely contained in $\overline{\mfR^{n+1}_+}$,
so we may follow the proof of {\bf Case 1} to show that, if there is some $0<h<\frac12R$ satisfying $\abs{o_i-o_j}<2R-2h$ for some $i\neq j$, then it must be that $h\leq C_1\ep^\frac1{(n+2)^2}$.
It is thus left to consider the case when both $o_i$ and $o_j$ satisfy
\eq{\label{ineq-o_i,E_n+1-upperbound}
\left<o_i,E_{n+1}\right>,\left<o_j,E_{n+1}\right>
\leq C\ep^\frac1{(n+2)^2},
}
as observed in {\bf Step 2}.

We claim as well that there exists a positive constant $ C_1= C_1(n,\theta,C_0)$, such that if there is some $0<h<\frac12R$ satisfying $F^o(o_i-o_j)<2R-2h$ for some $i\neq j$, then $h\leq C_1\ep^\frac1{(n+2)^2}$.

First, we conclude from \eqref{ineq-o_i,E_n+1-upperbound} and
the triangle inequality on $F^o$ that
\eq{
F^o(\tilde o_i-\tilde o_j)
<2R-2h+C\ep^\frac1{(n+2)^2},
}
where $\tilde o_i,\tilde o_j$ are projections of $o_i,o_j$ on $\p\mfR^{n+1}_+$.

Since $F^o$ is even on the $n$-dimensional space ${\rm span}\{E_1,\ldots,E_n\}$, we infer that
\eq{
F^o(\tilde o_j-\tilde o_i)
=F^o(\tilde o_i-\tilde o_j)
<2R-2h+C\ep^\frac1{(n+2)^2},
}
which in turn shows that $F^o(o_j-o_i)<2R-2h+C\ep^\frac1{(n+2)^2}$, and hence the intersection of $\mcW_{R}(o_i)\cap\overline{\mfR^{n+1}_+}$ and $\mcW_{R}(o_j)\cap\overline{\mfR^{n+1}_+}$ is non-trivial and has volume bounded from below by $Ch^\frac{n+2}2$.
A similar argument follows from that of {\bf Case 1} then proves the claim and hence finishes the step.

\noindent{\bf Step 4. 
We show that there exists $C_2=C_2(n,\theta,C_0)$, such that for
\eq{
r_i\coloneqq R-C_1\ep^{\frac1{(n+2)^2}} \text{ and } r_e\coloneqq R+C_2\ep^{\frac1{(n+2)^2}},}
there holds
\eq{\label{ineq-r_i-r_e}
\bigcup_{i=1}^N\mcW_{r_i}(o_i)\cap\overline{\mfR^{n+1}_+}
\subset\Om
\subset\bigcup_{i=1}^N\mcW_{r_e}(o_i)\cap\overline{\mfR^{n+1}_+},
}
which readily implies
\eqref{esti-Hausdorff-distance}.
}

To prove the first inclusion, we note that 
after decreasing $\ep$, if needed, we shall have $0<r_i<r_0$.

Exploiting
\eqref{ineq-JN23-(3-28)} and {\bf Claim 3} in the proof of Proposition \ref{Prop-JN23-Prop3.3}, we obtain
\eq{\label{inclusions:interior-ball}
\bigcup_{i=1}^N\left(\mcW_{r_i}(o_i)\cap\overline{\mfR^{n+1}_+}\right)
\subset(\Om_{r_0}+\mcW_{r_i})\cap\overline{\mfR^{n+1}_+}
\subset\Om_{r_0-r_i}
\subset\Om.
}

To prove the second inclusion in \eqref{ineq-r_i-r_e}, we exploit the estimate \eqref{ineq-JN23-(3-23)}.
Note that this estimate already shows that the set of points on $\S$ which do not belong to $(\overline{\Om}_{r_0}+\overline{\mcW_{r_0}})\cap\overline{\mfR^{n+1}_+}$ is almost $\mcH^n$-negligible.
Let us now show that, for any such point, it has to be $\ep^\frac1{(n+2)^2}$-close to $(\overline{\Om}_{r_0}+\overline{\mcW_{r_0}})\cap\overline{\mfR^{n+1}_+}$.
In fact, for any such point, say $x\in\S$, we define
\eq{
r_x
\coloneqq\sup_{r>0}\left\{r:\mcW_{r}(x)\cap\left(\left(\overline{\Om}_{r_0}+\overline{\mcW_{r_0}}\right)\cap\overline{\mfR^{n+1}_+}\right)=\emptyset\right\}.
}
Thereby, \eqref{ineq-JN23-(3-23)} implies that
\eq{
\mcH^n(\S\cap \mcW_{r_x}(x))
\leq C\ep^\frac1{n+2}.
}
On the other hand, we define $\mathfrak{r}_x=\min\{r_x,\frac{\de_{n,\theta}}\lambda\}$ for $\de_{n,\theta}$ resulting from Proposition \ref{Prop-density}.
Note that Proposition \ref{Prop-density} is applicable here, once we further require $\de<\de_{n,\theta}$.
In particular, we obtain
\eq{
\de_{n,\theta}\mathfrak{r}_x^n
\leq\mcH^n(\S\cap \mcW_{\mathfrak{r}_x}(x)),
}
which in turn implies
\eq{
\min\{r_x,\frac{\de_{n,\theta}}\lambda\}
=\mathfrak{r}_x
\leq C\ep^\frac1{n(n+2)}.
}
Observe that $\frac{\de_{n,\theta}}\lambda$ is bounded from below by some constant that depends only on $n,\theta,C_0$, therefore if necessary, we may choose $\ep$ further small, so that 
\eq{
r_x
=\mathfrak{r}_x
\leq\widetilde C_2\ep^\frac1{n(n+2)}
\leq\widetilde C_2\ep^\frac1{(n+2)^2},
}
as desired.

An immediate consequence of the above estimate is that,
for any such $x$, there exists some $y\in(\overline{\Om}_{r_0}+\overline{\mcW_{r_0}})\cap\overline{\mfR^{n+1}_+}$, such that $F^o(y-x)\leq\widetilde C_2\ep^\frac1{(n+2)^2}$, and hence
\eq{
F^o(x-y)
\leq\frac{M_{F^o}}{m_{F^o}}F^o(y-x)
\leq\frac{1+\abs{\cos\theta}}{1-\abs{\cos\theta}}\widetilde C_2\ep^\frac1{(n+2)^2}
\eqqcolon\widehat C_2\ep^\frac1{(n+2)^2},
}
which, together with
\eqref{ineq-JN23-(3-28)}, yields
\eq{
\S
\subset&(\overline{\Om}_{r_0}+\overline{\mcW_{r_0}})\cap\overline{\mfR^{n+1}_+}+\overline{\mcW_{\widehat C_2\ep^\frac1{(n+2)^2}}}
\subset(\Om_{r_0}+\mcW_{R})\cap\overline{\mfR^{n+1}_+}+\overline{\mcW_{\widehat C_2\ep^\frac1{(n+2)^2}}}\\
\subset&\bigcup_{i=1}^N\left(\mcW_{R+\ep_0}(o_i)\cap\overline{\mfR^{n+1}_+}\right)+\overline{\mcW_{\widehat C_2\ep^\frac1{(n+2)^2}}}
\subset\bigcup_{i=1}^N\mcW_{R+\ep_0+\widehat C_2\ep^\frac1{(n+2)^2}}(o_i),
}
where the last inclusion follows from the triangle inequality for $F^o$.
Finally, since $\S\subset\overline{\mfR^{n+1}_+}$, and $\ep_0=\ep^\frac1{2(n+2)}<\ep^\frac1{(n+2)^2}$, we readily see that
\eq{
\Om\subset\bigcup_{i=1}^N\mcW_{R+ C_2\ep^\frac1{n(n+2)}}(o_i)\cap\overline{\mfR^{n+1}_+},
}
for $C_2\coloneqq2\widehat C_2$. This proves the second inclusion in \eqref{ineq-r_i-r_e}, and of course, \eqref{esti-Hausdorff-distance}, which completes the proof.

\end{proof}







\appendix

\setcounter{equation}{0}
\renewcommand{\theequation}{\thesection.\arabic{equation}}

\section{Topping-type inequality}\label{App-1}

We follow the notations in Section \ref{Sec-2.1} with the additional assumption that $\Om$ is connected,
in this regard $\S$ is a connected $\theta$-capillary hypersurface in $\overline{\mbR^{n+1}_+}$ with boundary $\p\S\subset\p\mbR^{n+1}_+$.
Let ${\rm dist}_g$ denote the intrinsic distance function on $\S$.
For every $x\in\S$, define
\eq{
V(x,r)
\coloneqq\mcH^n(\S\cap B_r(x))
=\mcH^n\llcorner\S(B_r(x)).
}

Following \cite{Topping08},
we define for $n\geq2, R>0$ the maximal function
\eq{
M(x,R)
\coloneqq\sup_{r\in(0,R]}\frac1n r^{-\frac{1}{n-1}}V(x,r)^{-\frac{n-2}{n-1}}\int_{\S\cap B_r(x)}\abs{H_\S}\rd\mcH^n,
}
and the function that measures the collapsedeness
\eq{
\kappa(x,R)
\coloneqq\inf_{r\in(0,R]}\frac{V(x,r)}{r^n}.
}

The proof of Theorem \ref{Thm-Topping-ineq} is based on the following lemma.

\begin{lemma}\label{Lem-Topping08-1.2}
Under the assumptions of Theorem \ref{Thm-Topping-ineq}, and assume in addition that $n\geq2$. There exists a constant $\de>0$ depends only on $n,\theta$, for any $x\in\S$ and $R>0$, such that at least one of the following statements hold true:
\begin{enumerate}
\item $M(x,R)>\de$;
\item $\kappa(x,R)>\de$.
\end{enumerate}
\end{lemma}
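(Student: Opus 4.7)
The plan is to adapt the argument of Topping \cite{Topping08} to the capillary setting, replacing the classical Michael--Simon inequality by its capillary analogue Theorem \ref{Thm-Michal-Simon}. Arguing by contradiction, I would assume that both $M(x,R) \leq \delta$ and $\kappa(x,R) \leq \delta$ for some small $\delta = \delta(n,\theta)$ to be fixed in the course of the proof.

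Mimicking \textbf{Step 1} of the proof of Proposition \ref{Prop-density}, but with Euclidean balls in place of Wulff balls, I would test \eqref{ineq-Michael-Simon} with the Lipschitz cutoff $f_h$ equal to $1$ on $B_r(x)$, vanishing outside $B_{r+h}(x)$, and linear in $|\cdot - x|$ between (so that $|\na f_h|\leq h^{-1}$). Letting $h\to0^+$ along values with $\mcH^n(\S\cap\p B_{r+h}(x))=0$ yields, for a.e. $r \in (0,R]$,
\eq{
V(x,r)^{(n-1)/n} \leq \sigma(n,\theta)\Bigl(V'(x,r) + \int_{\S \cap B_r(x)} |H|\,\rd\mcH^n\Bigr).
}
Bounding the last integral by $n\delta\, r^{1/(n-1)} V(x,r)^{(n-2)/(n-1)}$ from the hypothesis $M(x,R)\leq\delta$, one gets
\eq{
V(x,r)^{(n-1)/n} \leq \sigma\, V'(x,r) + \sigma n\delta\, r^{1/(n-1)} V(x,r)^{(n-2)/(n-1)}.
}

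The heart of the proof is a dichotomy. I would choose $c_n=c_n(\sigma)>0$ small enough that $c_n^{1/n} < 1/(2n\sigma)$ and $c_n < \om_n/2$, and then restrict $\delta$ so that $(2\sigma n\delta)^{n(n-1)} < c_n$. Then, at each $r$, either \textbf{(A)} $V(x,r) \geq (2\sigma n\delta)^{n(n-1)}r^n$, in which case the second term on the right is at most $\tfrac12 V(x,r)^{(n-1)/n}$ and the inequality rearranges, using the identity $(n-1)/n-(n-2)/(n-1) = 1/(n(n-1))$, to $\frac{d}{dr}V(x,r)^{1/n} \geq 1/(2n\sigma)$; or \textbf{(B)} $V(x,r) < c_n r^n$. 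Introducing $W(r):= V(x,r)^{1/n} - c_n^{1/n} r$, regime (A) gives $W'(r) \geq 1/(2n\sigma) - c_n^{1/n} > 0$ a.e., while (B) is equivalent to $W(r)<0$; the two regimes are mutually exclusive on the set $\{W\geq0\}$ since $W(r)\geq0$ implies $V(x,r)\geq c_n r^n > (2\sigma n\delta)^{n(n-1)} r^n$.

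To close the contradiction, I would use the $C^2$-regularity of $\S$ at $x$: the density $V(x,r)/r^n$ converges to $\om_n$ (if $x \in \S\setminus\p\S$) or $\om_n/2$ (if $x \in \p\S$) as $r\to0^+$, both exceeding $c_n$ after possibly further shrinking $c_n$. Thus $W(r_0)>0$ for some small $r_0>0$, so regime (A) holds at $r_0$. Upgrading the a.e.\ bound $W'\geq 1/(2n\sigma) - c_n^{1/n}$ on (A) to the pointwise statement $W(r_2)\geq W(r_1) + (r_2-r_1)(1/(2n\sigma)-c_n^{1/n})$ for $r_1 < r_2$ with $[r_1,r_2]\subset\{W\geq0\}$, via the standard inequality $V^{1/n}(r_2)-V^{1/n}(r_1) \geq \int_{r_1}^{r_2}\frac{d}{dr}V^{1/n}(r)\,\rd r$ valid for monotone $V^{1/n}$, propagates positivity of $W$ to all of $[r_0,R]$. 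Combined with the density-based lower bound on $(0,r_0]$, this yields $V(x,r)\geq c_n r^n$ throughout $(0,R]$. Imposing additionally $\delta\leq c_n$ then contradicts $\kappa(x,R)\leq\delta$. The main technical delicacy I anticipate is the careful handling of the a.e.\ differential inequality given only the BV regularity of $V(x,\cdot)$, together with verifying the density lower bound uniformly at interior and $\p\S$-boundary points of $\S$; the capillary ingredient itself is clean and entirely absorbed into the constant $\sigma(n,\theta)$ of Theorem \ref{Thm-Michal-Simon}, since the boundary integral $\int_{\p\S} f_h\,\rd\mcH^{n-1}$ has already been eliminated there.
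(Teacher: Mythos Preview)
Your proposal is correct and follows the same strategy as the paper: assume $M(x,R)\le\de$, derive the differential inequality for $V(x,\cdot)$ from the capillary Michael--Simon inequality \eqref{ineq-Michael-Simon} via the radial cut-off, and use the density $V(x,r)/r^n\to\om_n$ (resp.\ $\om_n/2$) at interior (resp.\ boundary) points to force $V(x,r)>\de r^n$ on $(0,R]$. The only difference is in how the comparison step is executed: the paper compares $V$ directly to the explicit barrier $v(r)=\de r^n$, checking that the differential operator $f\mapsto f'+n\de r^{1/(n-1)}f^{(n-2)/(n-1)}-\sigma^{-1}f^{(n-1)/n}$ is $\ge0$ on $V$ and $\le0$ on $v$ for $\de=\de(n,\theta)$ small, and then invokes a standard ODE comparison; this is slightly cleaner than your dichotomy/propagation argument for $W(r)=V^{1/n}-c_n^{1/n}r$, and it sidesteps the BV-regularity concern you flag since the comparison principle only needs the integrated inequality.
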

\begin{proof}
We omit the argument $x$ and denote $V(x,r)$ simply by $V(r)$.

For some $\de>0$ to be chosen later, suppose that Case (1) does not happen, namely, $M(x,R)\leq\de$, then we have from the definition of $M$ that for all $r\in(0,R]$:
\eq{
\label{ineq-Topping08-2.1}
\int_{\S\cap B_r(x)}\abs{H_\S}\rd\mcH^n
\leq n\de r^{\frac{1}{n-1}}V(r)^{\frac{n-2}{n-1}}.
}

Following the proof of Proposition \ref{Prop-density} (with $\mcW_r$ therein replaced by $B_r$), especially {\bf Case 1}, we find
\eq{
V(r)^\frac{n-1}{n}
\leq\sigma(n,\theta)(V'(r)+\norm{H_\S}_{L^1(\S\cap B_r(x))}),
}
combined with \eqref{ineq-Topping08-2.1}, this yields
\eq{
V(r)^\frac{n-1}{n}
\leq\sigma(n,\theta)(V'(r)+n\de r^{\frac{1}{n-1}}V(r)^{\frac{n-2}{n-1}}).
}
Rearranging this inequality we obtain
\eq{\label{ineq-Topping08-2.2}
V'(r)+n\de r^{\frac{1}{n-1}}V(r)^\frac{n-2}{n-1}-\frac{1}{\sigma(n,\theta)}V(r)^\frac{n-1}{n}\geq0.
}
Consider on the other hand the function $v(r)\coloneqq\de r^n$, a simple computation yields
\eq{
v'(r)+n\de r^\frac{1}{n-1}v^\frac{n-2}{n-1}-\frac{1}{\sigma(n,\theta)}v^\frac{n-1}{n}
=(n\de+n\de^\frac{2n-3}{n-1}-\frac{1}{\sigma(n,\theta)}\de^\frac{n-1}{n})r^{n-1},
}
and hence by choosing $0<\de<\frac{1}{2}\om_n$ sufficiently small, depending only on $n,\theta$, we shall have (thanks to the fact that $n\geq2$):
\eq{\label{ineq-Topping08-2.3}
v'(r)+n\de r^\frac{1}{n-1}v^\frac{n-2}{n-1}-\frac{1}{\sigma(n,\theta)}v^\frac{n-1}{n}
\leq0.
}

To proceed, notice that $\frac{V(r)}{r^n}\ra\om_n$ as $r\ra0^+$ for $x\in\S\setminus\p\S$ and $\frac{V(r)}{r^n}\ra\frac12\om_n$ as $r\ra0^+$ for $x\in\p\S$. Taking also \eqref{ineq-Topping08-2.2}, \eqref{ineq-Topping08-2.3} into account, we may then use a standard ODE comparison argument to see that $V(r)>v(r)$ for all $r\in(0,R]$, and hence
\eq{
\kappa(x,R)
=\inf_{r\in(0,R]}\frac{V(x,r)}{r^n}>\de,
}
which completes the proof.
\end{proof}

\begin{proof}[Proof of Theorem \ref{Thm-Topping-ineq}]
As $n=1$, the assertion follows easily, thus we assume that $n\geq2$.

Since $\S$ is compact, we may choose $R>0$ sufficiently large so that $\mcH^n(\S)<\de R^n$ for the positive constant $\de$ obtained from Lemma \ref{Lem-Topping08-1.2}.
Accordingly for any $z\in\S$, it must be that $\kappa(z,R)\leq\frac{V(z,R)}{R^n}<\de$ and it follows from Lemma \ref{Lem-Topping08-1.2} that $M(z,R)>\de$.
In particular, we see from the definition of $M$ that for any $z\in\S$, there exists $r=r(z)$ such that
\eq{
\de<\frac1n r^{-\frac{1}{n-1}}V(z,r)^{-\frac{n-2}{n-1}}\int_{\S\cap B_r(z)}\abs{H_\S}\rd\mcH^n
\leq\frac1n r^{-\frac{1}{n-1}}\left(\int_{\S\cap B_r(z)}\abs{H_\S}^{n-1}\right)^\frac{1}{n-1},
}
where we have used the H\"older inequality.
This in turn gives that
\eq{
r(z)
\leq(\frac1n)^{n-1}\de^{1-n}\int_{\S\cap B_{r(z)}(z)}\abs{H_\S}^{n-1}\rd\mcH^n.
}

Let $p,q\in\S$ be any two points such that 
\eq{
d_{\rm ext}(\S)\coloneqq\max_{x,y\in\S}{\rm dist}(x,y)
=\abs{p-q},
}
and denote by $\gamma\subset\S$ any shortest geodesic joining $p$ and $q$.
Following the point-picking argument in \cite{Topping05}*{Lemma 5.2} (see also \cite{Topping08}), we find that:

There exists a countable (possibly finite) set of points $\{z_i\}\subset\gamma$ such that the balls $\{B_{r(z_i)}(z_i)\}$ are disjoint and satisfy
\eq{
\gamma\subset\bigcup_i B_{3r(z_i)(z_i)}.
}
It follows from the triangle inequality that
\eq{
d_{\rm ext}(\S)
=\abs{p-q}
\leq\sum_i6r(z_i).
}
This together with the previous local estimate, implies that
\eq{
{d}_{\rm ext}(\S)
\leq&6\sum_i r(z_i)
\leq6(\frac1n)^{n-1}\de^{1-n}\sum_i\int_{\S\cap B_{r(z_i)}(z_i)}\abs{H_\S}^{n-1}\rd\mcH^n\\
\leq&6(\frac1n)^{n-1}\de^{1-n}\int_\S\abs{H_\S}^{n-1}\rd\mcH^n,
}
which proves the assertion.
\end{proof}
\begin{remark}
\normalfont
When $n=2$, by using a clever doubling construction, Miura extends Topping's inequality to surfaces with boundary \cite{Miura22}*{Theorem 1.1}, which, together with \eqref{eq-divf-capillary-2}, readily yields the Topping-type inequality \eqref{ineq-Topping-capillary} for capillary surfaces in $\overline{\mbR^{3}_+}$.
\end{remark}


\bibliography{BibTemplate.bib}

\end{document}